\numberwithin{equation}{section} \setlength{\textwidth}{6.5in}
  \newtheorem{Theorem}{Theorem}[section]
\newtheorem{Proposition}[Theorem]{Proposition}
\newtheorem{Corollary}[Theorem]{Corollary}
\newtheorem{Remark}[Theorem]{Remark}
\newcommand\AMSname{AMS subject classifications}
\begin{document}

\title{A two-dimensional electrostatic model of interdigitated comb drive in longitudinal mode}
\author{Antonio Gaudiello\footnote{Dipartimento di Matematica e Applicazioni "Renato Caccioppoli",
Universit\`a degli Studi di Napoli Federico II, Via Cintia, Monte S.
Angelo, 80126 Napoli,  Italia. e-mail: gaudiell@unina.it}  and
Michel Lenczner\footnote{ UBFC / UTBM / FEMTO-ST, 26, Chemin de
l'Epitaphe, 25030 Besan\c{c}on, France. e-mail:
michel.lenczner@univ-fcomte.fr }}\date{ } \maketitle

\begin{abstract}

A periodic homogenization model of the electrostatic equation is constructed
for a  comb drive with a large number of fingers
and whose mode of operation is in-plane and longitudinal. The model is
obtained in the case where the distance between the rotor and the stator is
of an order $\varepsilon ^{\alpha }$, $\alpha \geq 2$, where $\varepsilon$ denotes the period of distribution of the fingers. The
model derivation uses the two-scale convergence technique.\ Strong convergences  are also established. This allows us to find, after a proper scaling,
the limit of the electrostatic force applied
to the rotor in the longitudinal direction.

\noindent Keywords: {Comb drive,  electrostatic forces, MEMS, homogenization }
\medskip

\par
\noindent2010 \AMSname:  35J05, 35B27
\end{abstract}

\section{Introduction}

The technology of Micro-Electro-Mechanical Systems, or MEMS, includes both
mechanical and electronic components on a single chip built with micro
fabrication techniques. The main MEMS parts are sensors, actuators, and
microelectronics. Many types of micro actuation techniques are available,
the most common of which are piezoelectric, magnetic, thermal,
electrochemical, and electrostatic actuation. The latter is clearly the most
widespread because of its compatibility with microfabrication technology,
its ease of integration and its low energy consumption. In particular,
electrostatic comb drives, introduced in \cite{tang1989laterally,
tang1990electrostatic} to enable large travel range at low driving voltage,
are among the most used electrostatically actuated devices in
microelectromechanical systems containing movable mechanical structures.

\begin{figure}[h]
    \centering
  \includegraphics[scale=0.25]{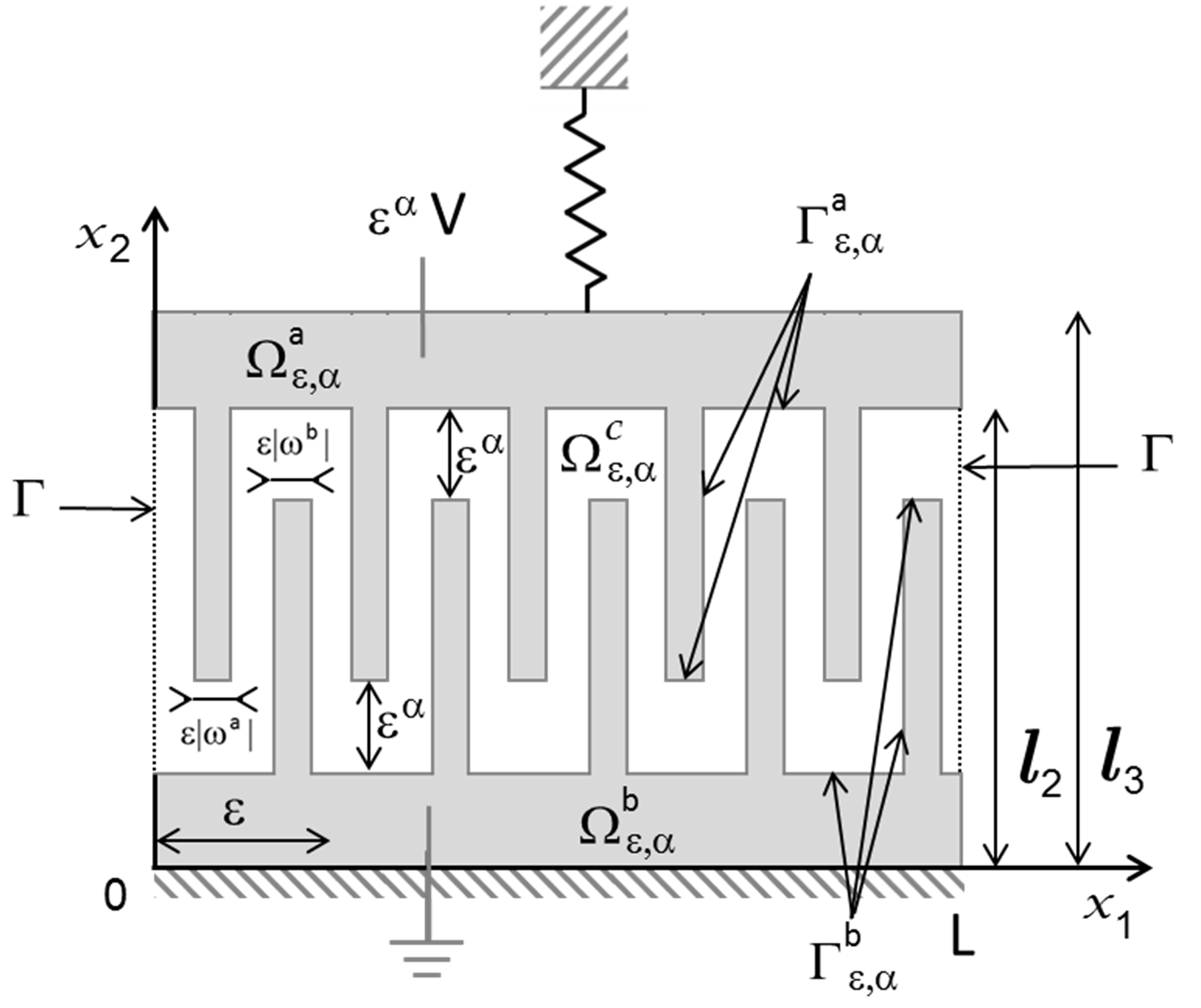}
  \caption{The   comb drive}
  \label{Fig1}
\end{figure}

A  comb drive is a deformable capacitor consisting of  conductive
stator and  rotor, each one composed of parallel fingers, that are
interdigitated, and whose number may exceed one hundred. The stator is
clamped and the rotor is suspended on elastic springs. The elastic
suspension is designed to allow the rotor to move in one of the desired
directions: longitudinal direction, i.e. parallel to the fingers, or in one of the two perpendicular
directions. From the electrical point of view, the stator is grounded and
the rotor is subjected to an electric potential $V$. The difference in
voltage induces an electrostatic force between the stator and the rotor
which causes a displacement of the rotor and therefore restoring forces in
the suspension. The equilibrium state is reached when the mechanical
restoring forces balance the electrostatic force.

The advantages of using electrostatic comb drive actuator approach include
low power dissipation, simple electronic control, and easy capacity-based
sensing mechanism. These devices are intended for applications in mechanical sensors, RF
communication, microbiology, mechanical power transmission,
long-range actuation, microphotonics, and microfluids
\cite{tang1990electrostatic, yeh1999integrated, kim1992silicon,
geiger1998new}. 

To achieve considerable electrostatic forces without
reverting to excessively high driving voltages, the freespace gap between the
electrodes must be minimal. With the advances of microfabrication
technology, thinner fingers and smaller gaps can be micromachined. This can
allow for a denser spacing of fingers and thus increase the power density of
comb drive actuators.

Design of complex MEMS involving multiple comb drives can not be performed
by trial and error due to the high microfabrication cost and time
consumption. Designers then make an intensive use of models. Part of the
comb drive modeling works focuse on the development of analytical models
that, beyond taking into account the electrostatic forces between parallel
parts, describe the fringe fields according to different methods and in many
configurations \cite{johnson1995electrophysics}, \cite{yeh2000electrostatic}%
, \cite{hammer2010analytical}, \cite{he2016analytical} \cite{li2012improved}%
, \cite{he2014calculating}, \cite{li2012improved}, and the analytical models
in the software package Coventor MEMS+ \cite{coventorMEMS+}.
On the other side, the use of direct numerical simulation remains the
reference approach for general configurations. Most often it is carried out
by a finite element method \cite{dong2011analysis}, \cite%
{chyuan2008computational}, \cite{ouakad2015numerical}, or a boundary
element method \cite{chyuan2004computational},
\cite{liao2004alternatively}. Despite the impressive increase of
computer power, the time scale required by their use for direct
simulation, optimization or calibration of complex systems is still
incompatible with the time scale of a designer.

Until now, the use of multiscale methods has not been yet explored on this
family of problems despite their periodic structure. However, they can offer
a good compromise between numerical methods adapted to general physics and
geometries, but expensive in simulation time, and analytical methods
developed for particular physics and geometries requiring only a few
computation resources.

 In this paper we develop a first comb drive multiscale model based on asymptotic methods. Precisely, we consider a 2-dimensional  model for an in-plane  comb drive, in a vacuum and in statical longitudinal regime, made by  a  rotor called $\Omega^a_{\varepsilon,\alpha}$ and a stator called $\Omega^b_{\varepsilon,\alpha}$ (see Figure \ref{Fig1}).
Both of them are composed by a set of $\varepsilon$-periodic fingers, with cross-section of order $\varepsilon$. The goal of this paper is to study the asymptotic behaviour of the  longitudinal electrostatic force applied on the rotor with respect  to two parameters: the period $\varepsilon$ and the small distance between the rotor and the stator.  {\it A priori} estimates show that in this model a  discriminating role is played by this distance that we consider of order $\varepsilon^\alpha$. Precisely, we prove that if $\alpha\geq2$ for obtaining asymptotically a force of order $O(1)$, the applied voltage has  to be of order $\varepsilon^\alpha V$ and in this case the  limit  force is given by
\begin{equation}\label{formulaprinciple}-\frac{\epsilon_0}{2}V^2L\left( \hbox{meas}(\omega^a)+\hbox{meas}(\omega^b)\right)\end{equation}
where  $\epsilon_0$ is the vacuum permittivity, $V$ is a constant  independent of $\varepsilon$, $L$  the comb length,  and $\hbox{meas}(\omega^a)$ and $\hbox{meas}(\omega^b)$  the  length of the cross section of the reference finger of the rotor and of the stator, respectively  (see Figure \ref{Fig1}). This result shows that only  the longitudinal forces on the extremities of the rotor's fingers  and on the part of the rotor's boundary  corresponding to the orthogonal projection of  the stator's fingers play a significant role. In particular, this means that the fringe field can be neglected in the  asymptotic regime $\alpha\geq2$. We expect that this phenomenon appears when $0\leq\alpha<2$. We also underline that in the limit force  there is no contribution of boundary layer effect on the lateral side of the comb, that are expected in other regimes.

The paper is organized in the following way.  The geometry of the comb drive is rigorously described in Section \ref{geometryapril2019}.  The problem satisfied by the electrical potential in  the vacuum between  the rotor and the stator is  given in Section \ref{Probbb} (see \eqref{J13,2019strong} where the voltage source is normalized by assuming  it equal to   1). The main result of this paper, i.e. the proof of formula  \eqref{formulaprinciple}, is stated in Theorem \ref{main theoremapril24,2019}. Section \ref{rescscsc} is devoted to rescale the problem given in Section \ref{Probbb} to a problem  on a domain where the finger's height  is  independent of $\varepsilon$ (see Figure \ref{Fig2}). Thus, the problem is split on three subdomains $\Omega^{c,1}_\varepsilon$,  $\Omega^{c,2}_\varepsilon$, and $\Omega^{c,3}_\varepsilon$ (see Figure \ref{Fig3}).
 Moreover, in  Proposition \ref{PropF25,2019}  we prove a key result which allows us to transform the longitudinal force applied on the rotor's boundary part  $\Gamma^a_{\varepsilon,\alpha}$  (see formula in \eqref{rafbarr} and also  p. 225 in \cite{kovetz2000electromagnetic}) into an  integral  on $\Omega^{c,1}_\varepsilon\cup\Omega^{c,2}_\varepsilon\cup\Omega^{c,3}_\varepsilon$.
{\it A priori} estimates of the rescaled solution of problem  \eqref{J13,2019strong} are obtained in Section \ref{apapstst}. They suggest  that different regimes depending on $\alpha$ can be  expected. Section \ref{casesssalpha=2} is devoted to prove Theorem \ref{main theoremapril24,2019}  in the case $\alpha=2$. The proof consists of several steps. In Section \ref{stime dettagliate alpha=2},  further {\it a priori} estimates of the rescaled solution are derived in the case $\alpha=2$. These estimates provide two-scale convergences (the two-scale convergence technique was proposed  in \cite{N} and developed  in
    \cite{A}, see also \cite{Casado},  \cite{CioDaGri}, and \cite{Lenc}). Then, in  Section \ref{wweeaakkconv} the two-scale  limits  are  identified on each subdomain $\Omega^{c,1}$,  $\Omega^{c,2}$, and $\Omega^{c,3}$ (see Figure \ref{Fig4}). The limit results are improved in Section \ref{corrrresult} by  corrector results. Finally in Section \ref{proofmmmmaitttheore}, these correctors allow us to pass to the limit in the  formula  of the longitudinal force stated in  Proposition \ref{PropF25,2019} and  to prove Theorem \ref{main theoremapril24,2019}  in the case $\alpha=2$.  The proof of Theorem \ref{main theoremapril24,2019}  in the case $\alpha>2$ is only sketched in Section \ref{sketched}.

Homogenization of oscillating boundaries with fixed amplitude is widely studied and we refer to the following main papers:  \cite{AiNaPr},    \cite{ki01ora},  \cite{ki01}, \cite{ki3}, \cite{BaCon}, \cite{ki9}, \cite{Bgg1}, \cite{BgM}, \cite{Blg}, \cite{BoOrRo}, \cite{BrChi},  \cite{BrChTh},  \cite{Chechtar}, \cite{DamPe}, \cite{Pan},  \cite{DeDuMe}, \cite{DeNan}, \cite{Durante1}, \cite{Durantebis}, \cite{EK}, \cite{gagu2}, \cite{gagumu}, \cite{gamel2}, \cite{GSili}, \cite{Lenczner} \cite{LencSmith}, \cite{m1}, \cite{m4},  \cite{NaPrSa}, \cite{Nk}, and \cite{Vinh}.

Also the homogenization of boundaries  with oscillations having small amplitude has a wide bibliography, but this argument is beyond the scope of this paper and a reader  interested in this subject can see some references quoted in  \cite{gagumu}.

\section{The geometry\label{geometryapril2019}}

Let $ \zeta_1,\,\zeta_2,\,\zeta_3,\,\zeta_4\in]0,1[$ be such that
$$\zeta_1<\zeta_2<\zeta_3<\zeta_4,$$
and set $$ \omega^a=]\zeta_1,\zeta_2[,\quad  \omega^b=]\zeta_3,\zeta_4[,$$
$$ \hbox{meas}(\omega^a)=\zeta_2-\zeta_1, \quad \hbox{meas}(\omega^b)=\zeta_4-\zeta_3.$$
\begin{figure}[h]
    \centering
  \includegraphics[scale=0.35]{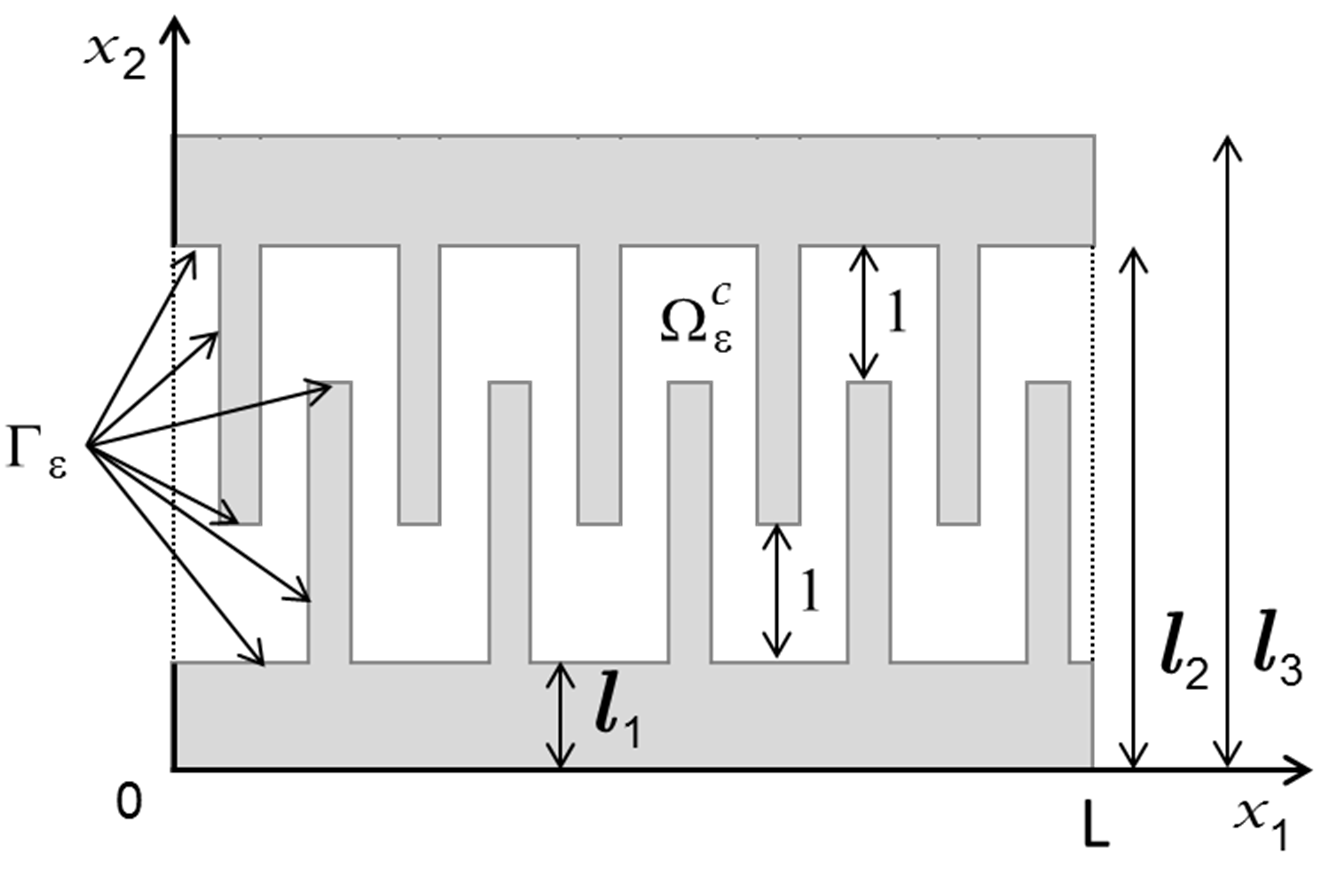}
  \caption{  The rescaled comb drive \label{Fig2}}
  \includegraphics[scale=0.35]{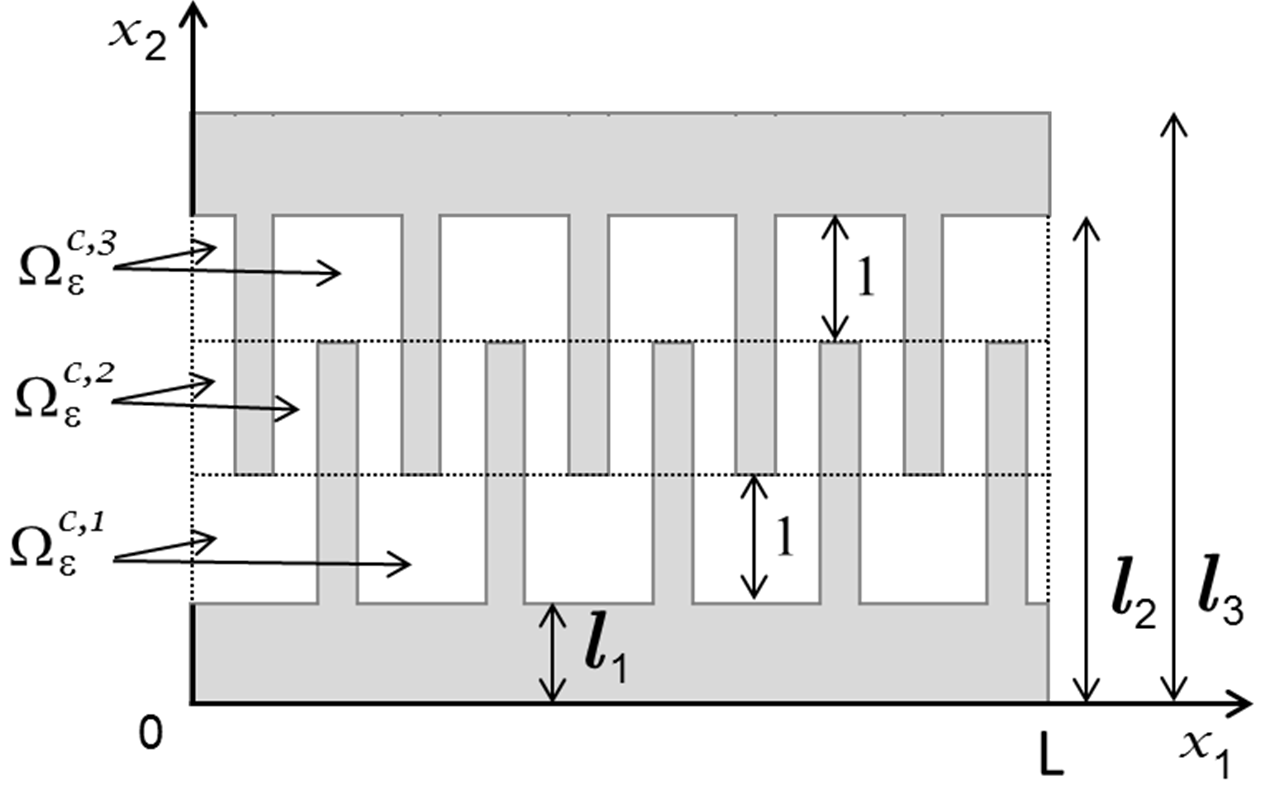}
  \caption{Decomposition of the  rescaled comb drive \label{Fig3}}
\end{figure}

Let $\alpha\in[0,+\infty[$, $L\in  ]0,+\infty[$, and $l_1,l_2,l_3\in  ]0,+\infty[$ be such that
 $$l_1+2<l_2<l_3.$$
For every  $\varepsilon\in \left\{\frac{L}{n}:n\in \mathbb{N}\right\}$ set (see Figure \ref{Fig1} for $\alpha>0$ or  Figure \ref{Fig2} for $\alpha=0$) 
 $$\Omega^a_{\varepsilon,\alpha}=\left(]0,L[\times]l_2,l_3[\right)\cup\left(\bigcup_{k=0}^{\frac{L}{\varepsilon}-1}\left(\varepsilon\omega^a+\varepsilon k\right)\times ]l_1+\varepsilon^\alpha,l_2]\right),$$ 
 $$\Omega^b_{\varepsilon,\alpha}=\left(]0,L[\times]0,l_1[\right)\cup\left(\bigcup_{k=0}^{\frac{L}{\varepsilon}-1}\left(\varepsilon\omega^b+\varepsilon k\right)\times [l_1,l_2-\varepsilon^\alpha[\right),$$
 $$ \Omega^c_{\varepsilon,\alpha}=\left(]0,L[\times]0,l_3[\right) \setminus\left( \overline{\Omega^a_{\varepsilon,\alpha}}\cup \overline{\Omega^b_{\varepsilon,\alpha}}\right),$$
 $$ \Gamma^a_{\varepsilon,\alpha}=\partial \Omega^a_{\varepsilon,\alpha}\cap\partial \Omega^c_{\varepsilon,\alpha},$$
 $$ \Gamma^b_{\varepsilon,\alpha}=\partial \Omega^b_{\varepsilon,\alpha}\cap\partial \Omega^c_{\varepsilon,\alpha},$$
 $$ \Gamma_{\varepsilon,\alpha}= \Gamma^a_{\varepsilon,\alpha}\cup  \Gamma^b_{\varepsilon,\alpha},$$
 $$\Gamma=\{0,L\}\times]l_1,l_2[.$$
 where $\Omega^a_{\varepsilon,\alpha}$ models the rotor, $\Omega^b_{\varepsilon,\alpha}$ the stator, each one composed of parallel fingers that are
interdigitated,  $\Omega^c_{\varepsilon,\alpha}$ the vacuum between the rotor and the stator, and $ \Gamma^a_{\varepsilon,\alpha}$ and  $ \Gamma^b_{\varepsilon,\alpha}$ are the parts of the boundary of the rotor and of the stator facing each other.
Moreover,  setting  (see Figure \ref{Fig3} for $\alpha=0$)
 $$ \Omega^{c,1}_{\varepsilon,\alpha}=   \Omega^c_{\varepsilon,\alpha}\cap\left(]0,L[\times ]l_1,l_1+\varepsilon^\alpha[\right),$$
 $$ \Omega^{c,2}_{\varepsilon,\alpha}=   \Omega^c_{\varepsilon,\alpha}\cap\left(]0,L[\times [l_1+\varepsilon^\alpha,l_2-\varepsilon^\alpha]\right),$$
  $$ \Omega^{c,3}_{\varepsilon,\alpha}=   \Omega^c_{\varepsilon,\alpha}\cap\left(]0,L[\times ]l_2-\varepsilon^\alpha, l_2[\right),$$
  the vacuum is split in three parts
  $$\Omega^c_{\varepsilon,\alpha}=\Omega^{c,1}_{\varepsilon,\alpha}\cup\Omega^{c,2}_{\varepsilon,\alpha}\cup\Omega^{c,3}_{\varepsilon,\alpha}.$$

Furthermore,  set (see Figure \ref{Fig4})
$$\Omega^{c,1} =]0,L[\times]l_1,l_1+1[,\quad \Omega^{c,2} =]0,L[\times]l_1+1,l_2-1[,\quad \Omega^{c,3} =]0,L[\times]l_2-1,l_2[.$$
\begin{Remark} For simplicity  we assumed $\varepsilon\in \left\{\frac{L}{n}:n\in \mathbb{N}\right\}$. Of course,  with small modifications in the proofs, all  results of this paper hold true with $\varepsilon\in ]0,1[$.
\end{Remark}

\begin{figure}[h]
    \centering
  \includegraphics[scale=0.35]{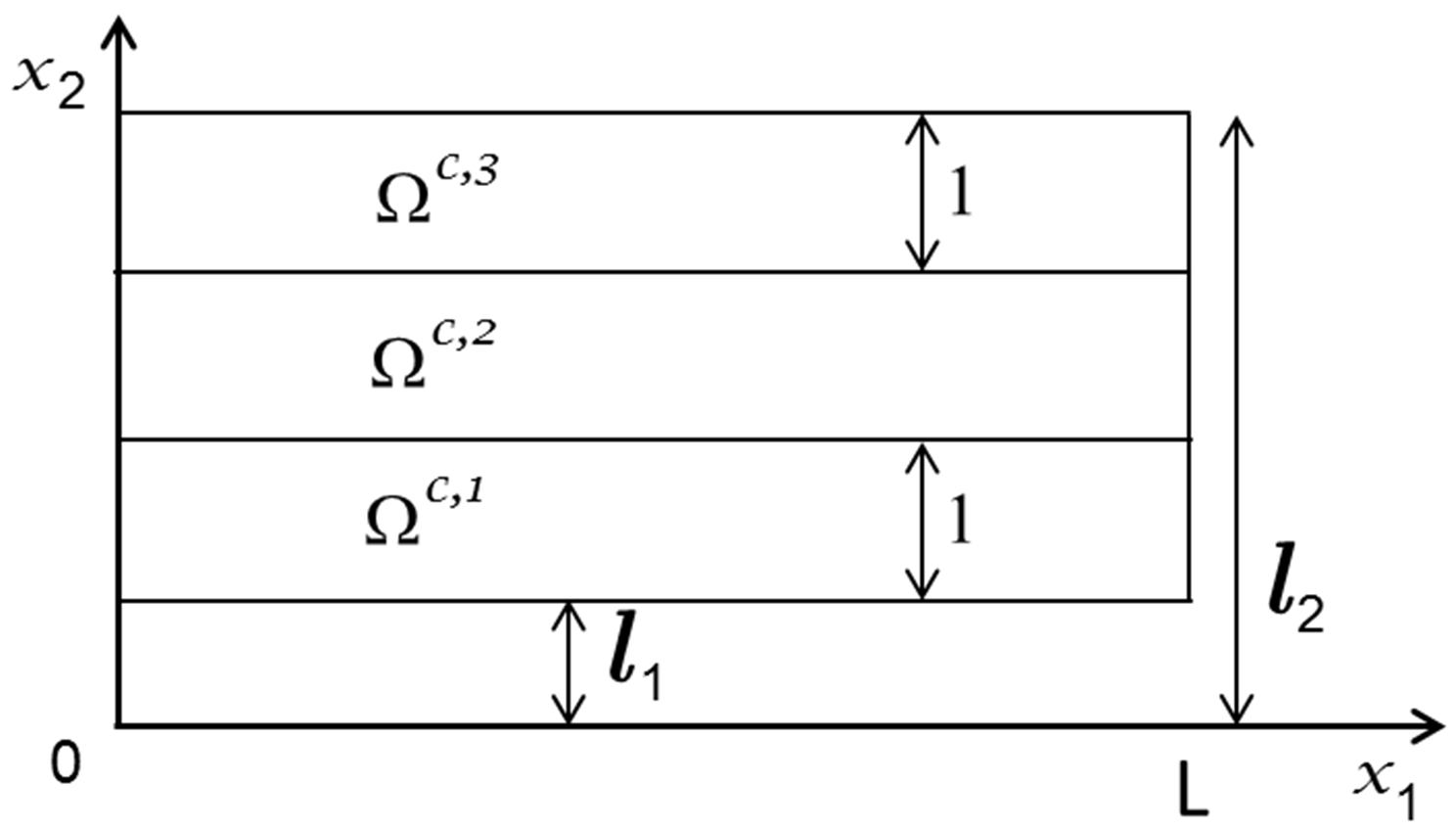}
  \caption{The limit domains}
  \label{Fig4}
\end{figure}

\section{The problem\label{Probbb}}
Let $\alpha\in [0,+\infty[$. Then,  for every $\varepsilon$ consider the following normalized problem
\begin{equation}\label{J13,2019strong}\left\{\begin{array}{lll}
-\Delta\phi_{\varepsilon}=0, \hbox{ in }  \Omega^c_{\varepsilon,\alpha},\\\\
\phi_{\varepsilon}= 1, \hbox{ on } \Gamma^a_{\varepsilon,\alpha},\\\\
\phi_{\varepsilon}= 0, \hbox{ on } \Gamma^b_{\varepsilon,\alpha},\\\\
\nabla \phi_{\varepsilon}\cdot\nu=0, \hbox{ on } \Gamma,
\end{array}\right.\end{equation}
where $\nu$ denotes  the unit normal 
to $\Gamma$ exterior to $ \Omega^c_{\varepsilon,\alpha}$.  The solution $\phi_{\varepsilon}$ represents the electrical potential  in the vacuum $ \Omega^c_{\varepsilon,\alpha}$ when the stator is grounded and the voltage in the rotor is assumed  equal to $1$. By setting
$$\mu_{\varepsilon,\alpha}=\left\{\begin{array}{ll}1,\hbox{ on }  \Gamma^a_{\varepsilon,\alpha},\\\\
0,\hbox{ on }  \Gamma^b_{\varepsilon,\alpha},\end{array}\right.$$
the weak formulation of \eqref{J13,2019strong} is 
\begin{equation}\label{J13,2019weak}\left\{\begin{array}{lll}\phi_{\varepsilon}\in H^1_{ \Gamma_{\varepsilon,\alpha}}(\Omega^c_{\varepsilon,\alpha},\mu_{\varepsilon,\alpha}),\\\\\displaystyle{\int_{\Omega^c_{\varepsilon,\alpha}}\nabla  \phi_{\varepsilon}\nabla \psi dx=0,\quad \forall\psi \in H^1_{ \Gamma_{\varepsilon,\alpha}}(\Omega^c_{\varepsilon,\alpha},0),}
\end{array}\right.\end{equation}
where for $g\in H^{-\frac{1}{2}} (\Gamma_{\varepsilon,\alpha})$ it is set $$H^1_{ \Gamma_{\varepsilon,\alpha}}(\Omega^c_{\varepsilon,\alpha},g)=\{\psi\in H^1(\Omega^c_{\varepsilon,\alpha}):\psi=g, \hbox{ on }\Gamma_{\varepsilon,\alpha}\}.$$

According to \cite{kovetz2000electromagnetic}, p. 225, the longitudinal  electrostatic force on rotor's boundary $\Gamma^a_{\varepsilon,\alpha}$  generated  by the electrical potential $\varepsilon^\alpha V \phi_{\varepsilon}$ in the vacuum is given by
\begin{equation}\label{rafbarr}-\frac{\epsilon_0}{2}V^2\int_{\Gamma^a_{\varepsilon,\alpha}} |\varepsilon^\alpha\nabla \phi_{\varepsilon}|^{2}\nu_2 \, ds,\end{equation}
where $\epsilon_0$ is the vacuum permittivity,$V$ is a constant independent of $\varepsilon$, and $\nu_2$ denotes the second component of the unit normal to $\Gamma^a_{\varepsilon,\alpha}$ exterior to $ \Omega^c_{\varepsilon,\alpha}$.

 The main result of this paper is the following one.
 \begin{Theorem}\label{main theoremapril24,2019} For every $\varepsilon$, let $\phi_\varepsilon$ be the unique solution to \eqref{J13,2019weak} with $\alpha\geq2$ and  let  $\nu_2$ denote the second component of the unit normal to $\Gamma^a_{\varepsilon,\alpha}$ exterior to $ \Omega^c_{\varepsilon,\alpha}$. Then,
\begin{equation}\label{F25,2019has}\lim_{\varepsilon\rightarrow 0}
\int_{\Gamma^a_{\varepsilon,\alpha}} |\varepsilon^\alpha\nabla \phi_{\varepsilon}|^{2}\nu_2 \, ds =L\left( \hbox{meas}(\omega^a)+\hbox{meas}(\omega^b)\right),\end{equation}
where $L$, $\omega^a$, and $\omega^b$ are defined in Section \ref{geometryapril2019}.
\end{Theorem}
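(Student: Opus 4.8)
The plan is to turn the surface force \eqref{F25,2019has} into a volume integral over the vacuum, to rescale the two thin gaps to unit height, and then to homogenize by two-scale convergence; the decisive difficulty will be the quadratic dependence on $\nabla\phi_\varepsilon$, which forces strong (corrector) convergence rather than merely weak limits. First I would reduce to a volume integral. Since $\phi_\varepsilon$ is harmonic, the Maxwell (Eshelby) tensor $P_{ij}=\frac12|\nabla\phi_\varepsilon|^2\delta_{ij}-\partial_i\phi_\varepsilon\,\partial_j\phi_\varepsilon$ satisfies $\partial_jP_{ij}=0$ in $\Omega^c_{\varepsilon,\alpha}$. Testing this with a transport field $\theta=(0,\theta_2)$ such that $\theta_2=-1$ on $\Gamma^a_{\varepsilon,\alpha}$ and $\theta_2=0$ on $\Gamma^b_{\varepsilon,\alpha}\cup\Gamma$, and using that $\phi_\varepsilon$ is constant on $\Gamma^a_{\varepsilon,\alpha}$ (so $\nabla\phi_\varepsilon$ is parallel to $\nu$ there), the divergence theorem yields
\[
\int_{\Gamma^a_{\varepsilon,\alpha}}|\nabla\phi_\varepsilon|^2\,\nu_2\,ds=\int_{\Omega^c_{\varepsilon,\alpha}}\Big(\partial_2\theta_2\big((\partial_1\phi_\varepsilon)^2-(\partial_2\phi_\varepsilon)^2\big)-2\,\partial_1\theta_2\,\partial_1\phi_\varepsilon\,\partial_2\phi_\varepsilon\Big)\,dx .
\]
This is the reduction carried out in Proposition \ref{PropF25,2019}; its advantage is that it needs only $L^2$ control of the gradient, whereas the trace of $\nabla\phi_\varepsilon$ on the rough interface $\Gamma^a_{\varepsilon,\alpha}$ is hard to handle directly. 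I would then multiply by $\varepsilon^{2\alpha}$ so that the scaled field $\varepsilon^\alpha\nabla\phi_\varepsilon$ appears in every term.

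Next, following Section \ref{rescscsc}, I would dilate $x_2$ by $\varepsilon^{-\alpha}$ in the two thin layers, mapping $\Omega^{c,1}_{\varepsilon,\alpha}$ and $\Omega^{c,3}_{\varepsilon,\alpha}$ onto the fixed domains $\Omega^{c,1}$ and $\Omega^{c,3}$. The a priori estimates of Section \ref{apapstst} should then give a uniform bound for $\varepsilon^\alpha\nabla\phi_\varepsilon$ in $L^2$ of each rescaled subdomain, which legitimizes the extraction of two-scale limits (Sections \ref{stime dettagliate alpha=2}--\ref{wweeaakkconv}). I expect the limit profiles to be explicit: in the thin gaps the rescaled potential tends to the affine interpolation between the two electrodes, so that $\varepsilon^\alpha\partial_2\phi_\varepsilon$ two-scale converges to $\pm1$ directly beneath the rotor-finger tips (in $\Omega^{c,1}$) and directly beneath the rotor base facing the stator-finger tips (in $\Omega^{c,3}$), while the transverse derivative is negligible there; in $\Omega^{c,2}$ the horizontal derivative dominates but is annihilated by the factor $\varepsilon^\alpha$ since $\alpha\geq2$.

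Choosing $\theta_2$ to perform its unit jump vertically inside the thin layers (over the $\omega^a$-columns in $\Omega^{c,1}$ and over the $\omega^b$-columns in $\Omega^{c,3}$), the dominant term $-\,\partial_2\theta_2\,(\varepsilon^\alpha\partial_2\phi_\varepsilon)^2$ then integrates, in the limit, to $L\,\hbox{meas}(\omega^a)$ from $\Omega^{c,1}$ and to $L\,\hbox{meas}(\omega^b)$ from $\Omega^{c,3}$, since $\int\partial_2\theta_2\,dx_2=-1$ on each such column and $(\varepsilon^\alpha\partial_2\phi_\varepsilon)^2\to1$; the cross term in $\partial_1\theta_2$ and the whole of $\Omega^{c,2}$ contribute nothing. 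Summing the three subdomains gives \eqref{F25,2019has}, and the case $\alpha>2$ follows the same scheme (Section \ref{sketched}).

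The hard part will be the passage to the limit in these quadratic terms: weak two-scale convergence does not pass to products, so I would have to establish the corrector (strong two-scale) results of Section \ref{corrrresult}, and, in parallel, to prove that the interdigitated region $\Omega^{c,2}$, where $|\nabla\phi_\varepsilon|$ blows up like $\varepsilon^{-1}$, really contributes nothing after the scaling. Matching the thin-layer profiles to the interior field across the interfaces $\{x_2=l_1+\varepsilon^\alpha\}$ and $\{x_2=l_2-\varepsilon^\alpha\}$, so that the correctors are globally admissible test functions, is the delicate technical point on which the whole argument rests.
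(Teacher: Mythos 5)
Your plan coincides with the paper's own proof: your Maxwell-tensor identity, tested against the transport field $\theta=(0,\theta_2)$ with $\theta_2=-\varphi^\star_\varepsilon\circ T^{-1}_{\varepsilon,\alpha}$, is exactly the volume formula of Proposition \ref{PropF25,2019} (which the paper derives by the equivalent route of two applications of Green's formula plus harmonicity), and your subsequent rescaling of the thin gaps, two-scale limits, corrector (strong convergence) results, and column-by-column evaluation of the dominant term reproduce Sections \ref{rescscsc}--\ref{proofmmmmaitttheore} and \ref{sketched}, with the correct identification of the limits $\partial_{x_2}\varphi_1=\chi_{\omega^a}$, $\partial_{x_2}\varphi_3=\chi_{\omega^b}$ and of the vanishing of the $\Omega^{c,2}$ and cross terms. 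The only step you assert without justification is the validity of the divergence theorem in the presence of the reentrant-corner singularities of $\nabla\phi_\varepsilon$ (which behaves like $r^{-1/3}$ there, so the Maxwell tensor is not smooth up to the boundary); the paper fills precisely this point in the second half of the proof of Proposition \ref{PropF25,2019}, via the singular/regular decomposition giving $|\nabla\phi_\varepsilon|^2\in W^{1,1}(\Omega^c_{\varepsilon,\alpha})$ and $\nabla\phi_\varepsilon\in W^{1,\frac{3}{2}}(\Omega^c_{\varepsilon,\alpha})$.
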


In the sequel,  the dependence on $\alpha$ of the domain will be omitted  when $\alpha=0$. For instance, $\Omega^a_{\varepsilon,0}$ will be denoted by $\Omega^a_\varepsilon$, and so on.

\section{The rescaling\label{rescscsc}}
By virtue of  transformation (see Figure \ref{Fig1} and Figure \ref{Fig2})
\begin{equation}\label{J24,2019trasf} T_{\varepsilon,\alpha}:  \Omega^{c}_\varepsilon\rightarrow  \Omega^{c}_{\varepsilon,\alpha}
\end{equation}
defined by
\begin{equation}\label{J24,2019trasfpart}\left\{\begin{array}{ll} (x_1,x_2)\in \Omega^{c,1}_\varepsilon\rightarrow\left(x_1,(x_2-l_1)\varepsilon^\alpha+l_1\right)\in  \Omega^{c,1}_{\varepsilon,\alpha},\\\\
 (x_1,x_2)\in \Omega^{c,2}_\varepsilon\rightarrow \left(x_1,D_\varepsilon(x_2-l_1-1)+l_1+\varepsilon^\alpha\right)\in  \Omega^{c,2}_{\varepsilon,\alpha},\\\\
(x_1,x_2)\in \Omega^{c,3}_\varepsilon\rightarrow\left(x_1,(x_2-l_2+1)\varepsilon^\alpha+l_2-\varepsilon^\alpha\right)\in  \Omega^{c,3}_{\varepsilon,\alpha},\end{array} \right.\end{equation}with
\begin{equation}\label{J24,2019trasfpartcoff} D_\varepsilon=\frac{l_2-l_1-2\varepsilon^\alpha}{l_2-l_1-2},\end{equation}
problem \eqref{J13,2019weak} is rescaled in the following one
\begin{equation}\label{J13,2019weakrescaled}\left\{\begin{array}{lll} \varphi_\varepsilon\in H^1_{ \Gamma_\varepsilon}(\Omega^c_\varepsilon,\mu_\varepsilon),\\\\
\displaystyle{\int_{\Omega^{c,1}_\varepsilon\cup \Omega^{c,3}_\varepsilon}\left(  \varepsilon^\alpha \partial_{x_1}  \varphi_\varepsilon \partial_{x_1}\psi+\varepsilon^{-\alpha} \partial_{x_2}  \varphi_\varepsilon \partial_{x_2}\psi \right)dx}\\\\ \displaystyle{+
\int_{\Omega^{c,2}_\varepsilon}\left(  D_\varepsilon \partial_{x_1}  \varphi_\varepsilon \partial_{x_1}\psi+D^{-1}_\varepsilon\partial_{x_2}  \varphi_\varepsilon \partial_{x_2}\psi \right)dx=0, \quad\forall\psi \in H^1_{ \Gamma_\varepsilon}(\Omega^c_\varepsilon,0).
}
\end{array}\right.\end{equation}

Remark that 
\begin{equation}\label{J15,2019}\lim_{\varepsilon\rightarrow 0} D_\varepsilon= \frac{l_2-l_1}{l_2-l_1-2}.
\end{equation}

Let \begin{equation}\label{F16,2019z}\varphi^\star\in  C^\infty(\mathbb{R}\times[l_1,l_2])\end{equation} be such that
\begin{equation}\label{F16,2019z1}\left\{\begin{array}{ll}\varphi^\star(\cdot,x_2)\hbox{ is 1-periodic  for every } x_2\in [l_1,l_2],
\\\\ \varphi^\star=1,\hbox{  in }\omega^a\times]l_1+1,l_2[, \quad \varphi^\star=0,\hbox{  in }\omega^b\times]l_1,l_2-1[, \\\\
\varphi^\star=1,\hbox{ on }\mathbb{R}\times\{l_2\}, \quad \varphi^\star=0,\hbox{  on }\mathbb{R}\times\{l_1\},
\end{array}\right.\end{equation}
and for every $\varepsilon\in]0,1[$ set
\begin{equation}\label{F16,2019zq}\varphi^\star_\varepsilon(x_1, x_2)=\varphi^\star\left(\frac{x_1}{\varepsilon}, x_2\right), \hbox{  in }\mathbb{R}\times[l_1,l_2].\end{equation}

 The previous rescaling allows us to rewrite formula  \eqref{rafbarr}.
 \begin{Proposition}\label{PropF25,2019} For every $\varepsilon$, let $\phi_\varepsilon$ be the unique solution to \eqref{J13,2019weak}, $\varphi_\varepsilon$ be the unique solution to \eqref{J13,2019weakrescaled},  $ \varphi^\star_\varepsilon$ be defined by \eqref{F16,2019z}-\eqref{F16,2019zq}, $D_\varepsilon$ be defined in \eqref{J24,2019trasfpartcoff}, and let  $\nu_2$ denote the second component of the unit normal to $\Gamma^a_{\varepsilon,\alpha}$ exterior to $ \Omega^c_{\varepsilon,\alpha}$. Then, for every $\varepsilon$,
\begin{equation}\label{24Aprile2019serabis}\begin{array}{ll}\displaystyle{
\int_{\Gamma^a_{\varepsilon,\alpha}} |\nabla \phi_{\varepsilon} |^{2}\nu_2 ds=}\\\\
\displaystyle{
\int_{\Omega^{c,1}_\varepsilon\cup \Omega^{c,3}_\varepsilon} \left(-\partial _{x_{2}}
\varphi^\star_{\varepsilon }\left(\left \vert\partial_{x_1}  \varphi_\varepsilon\right\vert^2-\frac{1}{\varepsilon^{2\alpha}}\left\vert\partial_{x_2}  \varphi_\varepsilon\right\vert^2\right)+2\partial_{x_2}
\varphi_{\varepsilon }\partial_{x_1}
\varphi^\star_{\varepsilon }\partial_{x_1}
\varphi_{\varepsilon }
\right)dx}
\\\\
\displaystyle{+
\int_{\Omega^{c,2}_\varepsilon} \left(-\partial _{x_{2}}
\varphi^\star_{\varepsilon }\left(\left \vert\partial_{x_1}  \varphi_\varepsilon\right\vert^2-\frac{1}{D_\varepsilon^2}\left\vert\partial_{x_2}  \varphi_\varepsilon\right\vert^2\right)+2\partial_{x_2}
\varphi_{\varepsilon }\partial_{x_1}
\varphi^\star_{\varepsilon }\partial_{x_1}
\varphi_{\varepsilon }
\right)dx.}
\end{array}\end{equation}
\end{Proposition}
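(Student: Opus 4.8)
The plan is to obtain \eqref{24Aprile2019serabis} from a Maxwell stress-tensor identity written in the physical domain $\Omega^c_{\varepsilon,\alpha}$, and then to pull the resulting volume integral back to the reference domain through the change of variables \eqref{J24,2019trasfpart}.

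First I would introduce the vector field (the second column of the planar electrostatic stress tensor attached to $\phi_\varepsilon$)
\[
\Sigma_\varepsilon=\Big(\partial_{x_1}\phi_\varepsilon\,\partial_{x_2}\phi_\varepsilon,\ \tfrac12\big(|\partial_{x_2}\phi_\varepsilon|^2-|\partial_{x_1}\phi_\varepsilon|^2\big)\Big),
\]
and verify the elementary identity $\operatorname{div}\Sigma_\varepsilon=\partial_{x_2}\phi_\varepsilon\,\Delta\phi_\varepsilon=0$, valid since $\phi_\varepsilon$ is harmonic in $\Omega^c_{\varepsilon,\alpha}$ by \eqref{J13,2019strong}. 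Because $\phi_\varepsilon\equiv 1$ on $\Gamma^a_{\varepsilon,\alpha}$, its tangential derivative vanishes there, so $\nabla\phi_\varepsilon=(\partial_\nu\phi_\varepsilon)\nu$ and a short computation gives $\Sigma_\varepsilon\cdot\nu=\tfrac12|\nabla\phi_\varepsilon|^2\nu_2$ on $\Gamma^a_{\varepsilon,\alpha}$.

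Next I would use $\Phi^\star_\varepsilon:=\varphi^\star_\varepsilon\circ T_{\varepsilon,\alpha}^{-1}$ as a scalar multiplier; by \eqref{F16,2019z1} it equals $1$ on $\Gamma^a_{\varepsilon,\alpha}$ and $0$ on $\Gamma^b_{\varepsilon,\alpha}$. The divergence theorem applied to $\Phi^\star_\varepsilon\Sigma_\varepsilon$, together with $\operatorname{div}\Sigma_\varepsilon=0$, gives $\int_{\partial\Omega^c_{\varepsilon,\alpha}}\Phi^\star_\varepsilon\,\Sigma_\varepsilon\cdot\nu\,ds=\int_{\Omega^c_{\varepsilon,\alpha}}\nabla\Phi^\star_\varepsilon\cdot\Sigma_\varepsilon\,dy$. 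On $\partial\Omega^c_{\varepsilon,\alpha}=\Gamma^a_{\varepsilon,\alpha}\cup\Gamma^b_{\varepsilon,\alpha}\cup\Gamma$ the term on $\Gamma^b_{\varepsilon,\alpha}$ vanishes since $\Phi^\star_\varepsilon=0$, and the term on $\Gamma$ vanishes because there $\nu=(\pm1,0)$ while the Neumann condition in \eqref{J13,2019strong} forces $\partial_{x_1}\phi_\varepsilon=0$, hence $\Sigma_\varepsilon\cdot\nu=0$. This leaves $\tfrac12\int_{\Gamma^a_{\varepsilon,\alpha}}|\nabla\phi_\varepsilon|^2\nu_2\,ds=\int_{\Omega^c_{\varepsilon,\alpha}}\nabla\Phi^\star_\varepsilon\cdot\Sigma_\varepsilon\,dy$; expanding the scalar product and multiplying by $2$ shows that $\int_{\Gamma^a_{\varepsilon,\alpha}}|\nabla\phi_\varepsilon|^2\nu_2\,ds$ equals the integral over $\Omega^c_{\varepsilon,\alpha}$ of $-\partial_{x_2}\Phi^\star_\varepsilon\big(|\partial_{x_1}\phi_\varepsilon|^2-|\partial_{x_2}\phi_\varepsilon|^2\big)+2\,\partial_{x_2}\phi_\varepsilon\,\partial_{x_1}\Phi^\star_\varepsilon\,\partial_{x_1}\phi_\varepsilon$.

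Finally I would split this integral over the three strips and change variables by \eqref{J24,2019trasfpart}. Writing $y=T_{\varepsilon,\alpha}(x)$ and recalling $\varphi_\varepsilon=\phi_\varepsilon\circ T_{\varepsilon,\alpha}$, $\varphi^\star_\varepsilon=\Phi^\star_\varepsilon\circ T_{\varepsilon,\alpha}$, the chain rule yields $\partial_{y_1}\phi_\varepsilon=\partial_{x_1}\varphi_\varepsilon$ on all strips, $\partial_{y_2}\phi_\varepsilon=\varepsilon^{-\alpha}\partial_{x_2}\varphi_\varepsilon$ on $\Omega^{c,1}_\varepsilon\cup\Omega^{c,3}_\varepsilon$ and $\partial_{y_2}\phi_\varepsilon=D_\varepsilon^{-1}\partial_{x_2}\varphi_\varepsilon$ on $\Omega^{c,2}_\varepsilon$, with the identical relations for $\Phi^\star_\varepsilon,\varphi^\star_\varepsilon$, while the Jacobians are $\varepsilon^\alpha$ and $D_\varepsilon$ respectively ($D_\varepsilon$ as in \eqref{J24,2019trasfpartcoff}). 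Substituting, each anisotropy factor $\varepsilon^{-2\alpha}$ or $D_\varepsilon^{-2}$ appears exactly where expected and every Jacobian cancels the remaining metric factor, so the three contributions reproduce term by term the right-hand side of \eqref{24Aprile2019serabis}. The step I expect to be delicate is the justification of the boundary manipulations at the reentrant corners formed by the finger tips, where $\nabla\phi_\varepsilon$ is singular: one must confirm that $|\nabla\phi_\varepsilon|^2$ remains integrable on $\Gamma^a_{\varepsilon,\alpha}$ and that the divergence theorem is legitimate for the merely $H^1$ solution $\phi_\varepsilon$. I would treat this by an exhaustion argument, excising small discs around the finitely many corners, applying the identity on the resulting Lipschitz subdomain where interior and flat-boundary elliptic regularity make $\phi_\varepsilon$ smooth, and then letting the discs shrink, using the known integrable corner asymptotics on polygonal domains to kill the boundary remainders.
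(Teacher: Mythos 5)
Your proposal is correct, and it reaches the paper's key intermediate identity by a genuinely different route. The paper proves exactly your volume identity --- its formula \eqref{stylo0}, namely
\begin{equation*}
\int_{\Gamma^a_{\varepsilon,\alpha}} |\nabla \phi_{\varepsilon} |^{2}\nu_2\, ds
=\int_{\Omega _{\varepsilon ,\alpha }^{c}}\left(-\partial _{x_{2}}\left(
\varphi^\star_{\varepsilon }\circ T_{\varepsilon,\alpha}^{-1}\right)\left\vert\nabla \phi_{\varepsilon}\right\vert^2+2\,\partial _{x_{2}}\phi_{\varepsilon}\, \nabla\left( \varphi^\star_{\varepsilon }\circ T_{\varepsilon,\alpha}^{-1}\right)\cdot\nabla
\phi_{\varepsilon}\right)dx,
\end{equation*}
which coincides, after expanding the scalar product, with your stress-tensor expression --- but it derives it by \emph{two} successive applications of Green's formula: first to $\partial_{x_2}\bigl((\varphi^\star_\varepsilon\circ T^{-1}_{\varepsilon,\alpha})|\nabla\phi_\varepsilon|^2\bigr)$, then to $\int(\varphi^\star_\varepsilon\circ T^{-1}_{\varepsilon,\alpha})\nabla\phi_\varepsilon\cdot\nabla(\partial_{x_2}\phi_\varepsilon)\,dx$; in the second step the target boundary integral reappears with a factor $2$ (via the same boundary inspection you perform for $\Sigma_\varepsilon\cdot\nu$: tangential derivative vanishing on $\Gamma^a_{\varepsilon,\alpha}$, the cutoff vanishing on $\Gamma^b_{\varepsilon,\alpha}$, the Neumann condition on $\Gamma$), and the identity follows by solving for it and using $\Delta\phi_\varepsilon=0$. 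Your single application of the divergence theorem to the divergence-free field $\Phi^\star_\varepsilon\Sigma_\varepsilon$ is more economical and, importantly, needs less regularity: the paper must justify its two Green formulas by the singular/regular decomposition at the reentrant corners, establishing $|\nabla\phi_\varepsilon|^2\in W^{1,1}(\Omega^c_{\varepsilon,\alpha})$ \eqref{7890aprile2019} \emph{and} $\nabla\phi_\varepsilon\in W^{1,\frac32}(\Omega^c_{\varepsilon,\alpha})$ \eqref{MiLenc}, and then invoking Ciarlet's Green formula theorem with exponents $p=\frac32$, $q=\frac65$; your excision argument needs only that $|\nabla\phi_\varepsilon|^2\sim r^{-2/3}$ is integrable on the boundary and in the bulk, and that the arc remainders decay like $\rho^{1/3}$ --- the same corner asymptotics $r^{2/3}\sin(2\theta/3)$, put to lighter use. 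The final change of variables via \eqref{J24,2019trasfpart}, with Jacobians $\varepsilon^\alpha$ and $D_\varepsilon$ cancelling the metric factors, is identical in both proofs and your bookkeeping of the anisotropy factors $\varepsilon^{-2\alpha}$ and $D_\varepsilon^{-2}$ matches \eqref{24Aprile2019serabis} exactly.
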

\begin{proof}
Let $T_{\varepsilon,\alpha}$ be defined by  \eqref{J24,2019trasf}-\eqref{J24,2019trasfpartcoff}. The first step is devoted to proving that
\begin{equation}\label{stylo0}\begin{array}{lll}\displaystyle{
\int_{\Gamma^a_{\varepsilon,\alpha}} |\nabla \phi_{\varepsilon} |^{2}\nu_2 ds}\\\\ \displaystyle{=\int_{\Omega _{\varepsilon ,\alpha }^{c}}\left(-\partial _{x_{2}}\left(
\varphi^\star_{\varepsilon }\circ T_{\varepsilon,\alpha}^{-1}\right)\left\vert\nabla \phi_{\varepsilon}\right\vert^2+2\partial _{x_{2}}\phi_{\varepsilon}  \nabla\left( \varphi^\star_{\varepsilon }\circ T_{\varepsilon,\alpha}^{-1}\right)\nabla
\phi_{\varepsilon}\right)dx, \quad\forall\varepsilon,}\end{array}
\end{equation}
from which \eqref{stylo0} follows by  changing of variable \eqref{J24,2019trasf} in the second integral.

As we shall show in the following, \begin{equation}\label{7890aprile2019}|\nabla \phi _{\varepsilon }|^{2}\in
W^{1,1}(\Omega _{\varepsilon ,\alpha }^{c}).\end{equation} In particular, also $(\varphi _{\varepsilon }^{\star }\circ T_{\varepsilon ,\alpha
}^{-1})|\nabla \phi _{\varepsilon }|^{2}$ belongs to $ W^{1,1}(\Omega _{\varepsilon
,\alpha }^{c})$. Thus, definitions \eqref{J24,2019trasf} and  \eqref{F16,2019z}-\eqref{F16,2019zq} allow us to write
\begin{equation}\begin{array}{ll} \label{stylo}\displaystyle{
\int_{\Gamma _{\varepsilon ,\alpha }^{a}}|\nabla \phi _{\varepsilon
}|^{2}\nu _{2}ds =\int_{\Gamma _{\varepsilon ,\alpha
}^{a}}(\varphi _{\varepsilon }^{\star }\circ T_{\varepsilon ,\alpha
}^{-1})|\nabla \phi _{\varepsilon }|^{2}\nu _{2}ds   }\\\\ \displaystyle{=\int_{\Gamma _{\varepsilon ,\alpha }\cup \Gamma }(\varphi _{\varepsilon
}^{\star }\circ T_{\varepsilon ,\alpha }^{-1})|\nabla \phi _{\varepsilon
}|^{2}\nu _{2}ds, \quad\forall\varepsilon.  }
\end{array}\end{equation}
The Green's Formula (for instance, see Th. 6.6-7 in \cite{Ciarlet}) gives
\begin{equation}  \label{stylo2}
\int_{\Gamma _{\varepsilon ,\alpha }\cup \Gamma }(\varphi _{\varepsilon
}^{\star }\circ T_{\varepsilon ,\alpha }^{-1})|\nabla \phi _{\varepsilon
}|^{2}\nu _{2}ds=\int_{\Omega _{\varepsilon ,\alpha }^{c}}\partial
_{x_{2}}((\varphi _{\varepsilon }^{\star }\circ T_{\varepsilon ,\alpha
}^{-1})|\nabla \phi _{\varepsilon }|^{2})dx , \quad\forall\varepsilon.
\end{equation}
 Then, \eqref{stylo} and \eqref{stylo2}) provides  
\begin{equation} \label{stylo3}\begin{array}{ll} \displaystyle{
\int_{\Gamma _{\varepsilon ,\alpha }^{a}}|\nabla \phi _{\varepsilon
}|^{2}\nu _{2}ds}\\\\ \displaystyle{=\int_{\Omega _{\varepsilon ,\alpha }^{c}}\partial
_{x_{2}}(\varphi _{\varepsilon }^{\star }\circ T_{\varepsilon ,\alpha
}^{-1})|\nabla \phi _{\varepsilon }|^{2}dx +2 \int_{\Omega _{\varepsilon ,\alpha }^{c}}(\varphi _{\varepsilon }^{\star
}\circ T_{\varepsilon ,\alpha }^{-1})\nabla \phi _{\varepsilon }\nabla (\partial
_{x_{2}} \phi _{\varepsilon })dx,  \quad\forall\varepsilon.}\end{array}
\end{equation}

On the other side (see below),  \begin{equation}\label{MiLenc}\nabla \phi _{\varepsilon } \in W^{1,\frac{3}{2}}(\Omega_{\varepsilon ,\alpha }^{c}).\end{equation} In particular,   $(\varphi _{\varepsilon }^{\star }\circ
T_{\varepsilon ,\alpha }^{-1})\nabla \phi _{\varepsilon } $ belongs to $ W^{1,\frac{3}{2}}(\Omega
_{\varepsilon ,\alpha }^{c})$, and  $\partial
_{x_{2}}\phi _{\varepsilon }$ belongs to  $ W^{1,\frac{3}{2}}(\Omega
_{\varepsilon ,\alpha }^{c})$ which is included in  $ W^{1,\frac{6}{5}}(\Omega
_{\varepsilon ,\alpha }^{c})$. Consequently, again applying the Green's Formula as it appears in
Theorem 6.6-7 in  \cite{Ciarlet} with exponents $p=\frac{3}{2}$ and $q=\frac{6}{5}$,  the last integral in the right-hand side of (\ref{stylo3}) becomes
\begin{equation}\label{stylo4}\begin{array}{ll} \displaystyle{
\int_{\Omega _{\varepsilon
,\alpha }^{c}}(\varphi _{\varepsilon }^{\star }\circ T_{\varepsilon ,\alpha
}^{-1})\nabla \phi _{\varepsilon }\nabla (\partial _{x_{2}}\phi_{\varepsilon })dx}\\\\
\displaystyle{
=-\int_{\Omega _{\varepsilon ,\alpha }^{c}}{div}((\varphi
_{\varepsilon }^{\star }\circ T_{\varepsilon ,\alpha }^{-1})\nabla \phi
_{\varepsilon })\text{ }\partial _{x_{2}}\phi _{\varepsilon }
dx+\int_{\Gamma _{\varepsilon ,\alpha }\cup \Gamma }(\varphi _{\varepsilon
}^{\star }\circ T_{\varepsilon ,\alpha }^{-1})\partial _{x_{2}}\phi
_{\varepsilon }\nabla \phi _{\varepsilon }\nu ds  ,  \quad\forall\varepsilon,
 }
\end{array}\end{equation}
where $\nu $ is the unit normal to $\Gamma _{\varepsilon ,\alpha }\cup
\Gamma $ exterior to $\Omega _{\varepsilon ,\alpha }^{c}$. Since%
\[
\int_{\Gamma _{\varepsilon ,\alpha }\cup \Gamma }(\varphi _{\varepsilon
}^{\star }\circ T_{\varepsilon ,\alpha }^{-1})\partial _{x_{2}}\phi
_{\varepsilon }\nabla \phi _{\varepsilon }\nu ds=\int_{\Gamma
_{\varepsilon ,\alpha }^{a}}|\nabla \phi _{\varepsilon }|^{2}\nu _{2}
ds, \quad\forall\varepsilon,
\]%
which can be  checked by inspectioning on each part of $\Gamma _{\varepsilon
,\alpha }\cup \Gamma $, one can rewrite (\ref{stylo4}) as
\begin{equation}\label{labeslao}\begin{array}{ll} \displaystyle{
\int_{\Omega _{\varepsilon ,\alpha }^{c}}(\varphi _{\varepsilon }^{\star
}\circ T_{\varepsilon ,\alpha }^{-1})\nabla \phi _{\varepsilon }\partial
_{x_{2}}\nabla \phi _{\varepsilon }dx}\\\\ \displaystyle{=-\int_{\Omega _{\varepsilon
,\alpha }^{c}}{div}((\varphi _{\varepsilon }^{\star }\circ
T_{\varepsilon ,\alpha }^{-1})\nabla \phi _{\varepsilon })\text{ }\partial
_{x_{2}}\phi _{\varepsilon }dx+\int_{\Gamma _{\varepsilon ,\alpha
}^{a}}|\nabla \phi _{\varepsilon }|^{2}\nu _{2}ds, \quad\forall\varepsilon.
 }
\end{array}\end{equation}

Comparing  (\ref{stylo3}) and \eqref{labeslao}  gives
\begin{equation}\nonumber\begin{array}{ll} \displaystyle{
\int_{\Gamma _{\varepsilon ,\alpha }^{a}}|\nabla \phi _{\varepsilon
}|^{2}\nu _{2}ds}\\\\ \displaystyle{ =-\int_{\Omega _{\varepsilon ,\alpha
}^{c}}\partial _{x_{2}}(\varphi _{\varepsilon }^{\star }\circ T_{\varepsilon
,\alpha }^{-1})|\nabla \phi _{\varepsilon }|^{2}dx+2\int_{\Omega
_{\varepsilon ,\alpha }^{c}}{div}((\varphi _{\varepsilon }^{\star
}\circ T_{\varepsilon ,\alpha }^{-1})\nabla \phi _{\varepsilon })\partial
_{x_{2}}\phi _{\varepsilon }dx} \\\\ \displaystyle{
=-\int_{\Omega _{\varepsilon ,\alpha }^{c}}\partial _{x_{2}}(\varphi
_{\varepsilon }^{\star }\circ T_{\varepsilon ,\alpha }^{-1})|\nabla \phi
_{\varepsilon }|^{2}dx+2\int_{\Omega _{\varepsilon ,\alpha
}^{c}}\nabla (\varphi _{\varepsilon }^{\star }\circ T_{\varepsilon ,\alpha
}^{-1})\nabla \phi _{\varepsilon }\partial _{x_{2}}\phi _{\varepsilon
}dx}\\\\
\displaystyle{+\int_{\Omega _{\varepsilon ,\alpha
}^{c}}(\varphi _{\varepsilon }^{\star }\circ T_{\varepsilon ,\alpha
}^{-1})\Delta \phi _{\varepsilon }\partial _{x_{2}}\phi _{\varepsilon }dx, \quad\forall\varepsilon,
 }
\end{array}\end{equation}
which provides (\ref{stylo0}) since $\Delta \phi _{\varepsilon }=0$ in $
\Omega _{\varepsilon ,\alpha }^{c}$.

Now, we sketch  the proof of \eqref{7890aprile2019}, based on the decomposition of $
\phi _{\varepsilon }$ as a sum of its singular and regular parts $\phi
_{\varepsilon }^{S}\in H^{1}(\Omega _{\varepsilon ,\alpha }^{c})$ and $\phi
_{\varepsilon }^{S}\in H^{2}(\Omega _{\varepsilon ,\alpha }^{c})$. At the
vicinity of any reentering corner with angle $\omega =\frac{3\pi }{2}$, the
expression in polar coordinate of the singular part reads $$\phi
_{\varepsilon }^{S}(r,\theta )=r^{\frac{2}{3}}\sin \left(\frac{2\theta }{3}\right).$$ 
Thus,
$$|\nabla \phi _{\varepsilon }^{S}|^{2}(r,\theta )=r^{-\frac{2}{3}}\Phi
_{0}(\theta ),$$  with $\Phi _{0}\in C^\infty$.
The expansion of $\nabla |\nabla \phi _{\varepsilon }|^{2}$ in $\phi
_{\varepsilon }^{S}$ and $\phi _{\varepsilon }^{R}$ includes four terms:\begin{equation}\label{nablabnabla}\nabla |\nabla \phi _{\varepsilon }^{S}|^{2}, \quad \nabla \nabla \phi _{\varepsilon }^{S}\nabla
\phi _{\varepsilon }^{R}, \quad \nabla |\nabla \phi _{\varepsilon }^{R}|^{2}, \hbox{ and }\nabla \nabla \phi _{\varepsilon }^{R}\nabla
\phi _{\varepsilon }^{S},\end{equation}
 of
which only the first  two  terms cause regularity problems.

 As the first term in \eqref{nablabnabla} is concerned, one has $$\nabla |\nabla \phi _{\varepsilon }^{S}|^{2}(r,\theta
)=r^{-\frac{5}{3}}\Phi _{1}(\theta ),$$ with $\Phi _{1}\in C^\infty$. Then, it is  integrable.  As the second term in \eqref{nablabnabla} is concerned, one has 
$$\nabla \nabla \phi _{\varepsilon }^{S}\nabla
\phi _{\varepsilon }^{R}=(r^{\frac{1}{3}}\nabla \nabla \phi _{\varepsilon
}^{S})(r^{-\frac{1}{3}}\nabla \phi _{\varepsilon }^{R})$$
and 
its integrability  comes from the
observation that both terms $r^{\frac{1}{3}}\nabla \nabla \phi _{\varepsilon }^{S}$
are $r^{-\frac{1}{3}}\nabla \phi _{\varepsilon }^{R}$ are  square
integrable. 

The contribution of the corners with mixed conditions, that is
at the ends of $\Gamma $, to the singular part is in $H^{2-\eta }(\Omega
_{\varepsilon ,\alpha }^{c})$ for any positive $\eta $ and does not yield
any regularity issue.

 Regularity result \eqref{MiLenc} can be  proved with the same  arguments.

\end{proof}

\section{{\it A priori} estimates\label{apapstst} }
\begin{Proposition} \label{J18,2019Proposition}For every $\varepsilon$, let $\varphi_\varepsilon$ be the unique solution to \eqref{J13,2019weakrescaled}. Then
\begin{equation}\label{J18,2019estimates}\exists c\in]0,+\infty[\quad:\quad\left\{\begin{array}{lll}\displaystyle{ \int_{\Omega^{c,1}_\varepsilon\cup \Omega^{c,3}_\varepsilon} \vert \partial_{x_1}  \varphi_\varepsilon\vert^2 dx\leq  c\left(\varepsilon^{-2-\alpha}+  \varepsilon^{-2\alpha}\right),}\\\\
\displaystyle{ \int_{\Omega^{c,1}_\varepsilon\cup \Omega^{c,3}_\varepsilon} \vert\partial_{x_2}  \varphi_\varepsilon \vert^2 dx\leq  c\left(\varepsilon^{\alpha-2}+  1\right), 
}\\\\ \displaystyle{ \int_{\Omega^{c,2}_\varepsilon}  \vert\nabla  \varphi_\varepsilon \vert^2 dx\leq  c\left(\varepsilon^{-2}+  \varepsilon^{-\alpha}\right),}
\end{array}\right.\quad\forall\varepsilon.\end{equation}
\end{Proposition}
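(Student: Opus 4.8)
The plan is to test the rescaled weak formulation \eqref{J13,2019weakrescaled} against the solution itself, after subtracting a fixed comparison function that carries the non-homogeneous Dirichlet data, and then to read off the three estimates from the resulting weighted-energy inequality by exploiting the nonnegativity of each individual term.

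Concretely, I would first observe that the function $\varphi^\star_\varepsilon$ introduced in \eqref{F16,2019z}-\eqref{F16,2019zq} already realizes the boundary values of the problem: by its defining conditions \eqref{F16,2019z1}, $\varphi^\star=1$ on (the closure of) the rotor region and on $\mathbb{R}\times\{l_2\}$, while $\varphi^\star=0$ on the stator region and on $\mathbb{R}\times\{l_1\}$, so that $\varphi^\star_\varepsilon=\mu_\varepsilon$ on $\Gamma_\varepsilon$. Hence $w_\varepsilon:=\varphi_\varepsilon-\varphi^\star_\varepsilon$ lies in $H^1_{\Gamma_\varepsilon}(\Omega^c_\varepsilon,0)$ and is admissible. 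Writing $a_\varepsilon(\cdot,\cdot)$ for the symmetric nonnegative bilinear form on the left of \eqref{J13,2019weakrescaled}, the choice $\psi=w_\varepsilon$ gives $a_\varepsilon(\varphi_\varepsilon,\varphi_\varepsilon)=a_\varepsilon(\varphi_\varepsilon,\varphi^\star_\varepsilon)$, and Young's inequality (equivalently Cauchy--Schwarz for $a_\varepsilon$) absorbs the mixed term to yield $a_\varepsilon(\varphi_\varepsilon,\varphi_\varepsilon)\le a_\varepsilon(\varphi^\star_\varepsilon,\varphi^\star_\varepsilon)$.

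The core computation is then a fully explicit bound on $a_\varepsilon(\varphi^\star_\varepsilon,\varphi^\star_\varepsilon)$. Since $\varphi^\star_\varepsilon(x_1,x_2)=\varphi^\star(x_1/\varepsilon,x_2)$ with $\varphi^\star$ smooth and $1$-periodic in its first variable, one has $\partial_{x_1}\varphi^\star_\varepsilon=\varepsilon^{-1}(\partial_{y_1}\varphi^\star)(x_1/\varepsilon,x_2)$ and $\partial_{x_2}\varphi^\star_\varepsilon=(\partial_{x_2}\varphi^\star)(x_1/\varepsilon,x_2)$, both derivatives of $\varphi^\star$ being bounded on the fundamental cell. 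A change of variables and periodicity then give $\int|\partial_{x_1}\varphi^\star_\varepsilon|^2=O(\varepsilon^{-2})$ and $\int|\partial_{x_2}\varphi^\star_\varepsilon|^2=O(1)$ on each subdomain. Inserting the weights $\varepsilon^{\pm\alpha}$ on $\Omega^{c,1}_\varepsilon\cup\Omega^{c,3}_\varepsilon$, and using on $\Omega^{c,2}_\varepsilon$ that $D_\varepsilon$ and $D_\varepsilon^{-1}$ stay bounded (above and below, by positive constants) as $\varepsilon\to0$ in view of \eqref{J15,2019} and $l_1+2<l_2$, I obtain $a_\varepsilon(\varphi^\star_\varepsilon,\varphi^\star_\varepsilon)\le c(\varepsilon^{\alpha-2}+\varepsilon^{-\alpha}+\varepsilon^{-2})$.

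Finally, since every term of $a_\varepsilon(\varphi_\varepsilon,\varphi_\varepsilon)$ is nonnegative, I would isolate each against this bound and divide by its weight: dividing the $\varepsilon^\alpha|\partial_{x_1}\varphi_\varepsilon|^2$ contribution by $\varepsilon^\alpha$ yields the first estimate, multiplying the $\varepsilon^{-\alpha}|\partial_{x_2}\varphi_\varepsilon|^2$ contribution by $\varepsilon^\alpha$ the second, and on $\Omega^{c,2}_\varepsilon$ the lower bound on $D_\varepsilon,D_\varepsilon^{-1}$ the third. In each case the nominally extra powers of $\varepsilon$ are dominated, for $\varepsilon<1$ and $\alpha\ge0$, by the two retained ones (e.g. $\varepsilon^{-2}\le\varepsilon^{-2-\alpha}$, $\varepsilon^{2\alpha-2}\le\varepsilon^{\alpha-2}$, $\varepsilon^{\alpha-2}\le\varepsilon^{-2}$), which reproduces exactly the stated right-hand sides. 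The only genuinely delicate point is verifying that $\varphi^\star_\varepsilon$ matches $\mu_\varepsilon$ along the whole highly oscillating interface $\Gamma_\varepsilon$ --- side walls, finger tips, and the undersides and tops of the combs --- which must be checked piece by piece against the five defining conditions in \eqref{F16,2019z1}; everything after that is routine weighted-energy bookkeeping.
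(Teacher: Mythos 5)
Your proof is correct and takes essentially the same route as the paper: the paper also tests \eqref{J13,2019weakrescaled} with $\psi=\varphi_\varepsilon-\varphi^\star_\varepsilon$, computes $\Vert\partial_{x_1}\varphi^\star_\varepsilon\Vert^2_{L^2}=O(\varepsilon^{-2})$ and $\Vert\partial_{x_2}\varphi^\star_\varepsilon\Vert^2_{L^2}=O(1)$ by periodicity (its \eqref{derx1}--\eqref{derx2}), and uses Young's inequality together with \eqref{J15,2019} to reach the combined weighted bound $c(\varepsilon^{-2}+\varepsilon^{-\alpha})$, from which the three estimates follow by dividing out the weights exactly as you do. The admissibility point you flag as delicate --- that $\varphi^\star_\varepsilon=\mu_\varepsilon$ on all of $\Gamma_\varepsilon$, which follows from \eqref{F16,2019z1} and continuity of $\varphi^\star$ --- is used implicitly and without comment in the paper, so your explicit verification is a welcome addition rather than a deviation.
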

\begin{proof} For every $\varepsilon$, let $\varphi^\star_\varepsilon$  be defined by \eqref{F16,2019z}-\eqref{F16,2019zq}.
Moreover, set 
$$Y=]0,1[ \times]l_1,l_2[.$$ Then, one has
\begin{equation} \label{J18,2019}\Vert \varphi^\star_\varepsilon\Vert^2_{L^2(\Omega^c_\varepsilon)}\leq\sum_{k=0}^{\frac{L}{\varepsilon}}\varepsilon\Vert \varphi^\star\Vert^2_{L^2(Y)}= L
\Vert \varphi^\star\Vert^2_{L^2(Y)}, \quad\forall\varepsilon.
\end{equation}
Similarly, one obtains
\begin{equation} \label{derx1}\Vert \partial_{x_1}\varphi^\star_\varepsilon\Vert^2_{L^2(\Omega^c_\varepsilon)}= \frac{L}{\varepsilon^2}
\Vert \partial_{x_1}\varphi^\star\Vert^2_{L^2(Y)}, \quad\forall\varepsilon,
\end{equation}
and
\begin{equation} \label{derx2}\Vert \partial_{x_2}\varphi^\star_\varepsilon\Vert^2_{L^2(\Omega^c_\varepsilon)}= L
\Vert \partial_{x_2}\varphi^\star\Vert^2_{L^2(Y)}, \quad\forall\varepsilon.
\end{equation}

Now choosing $\psi= \varphi_\varepsilon-\varphi^\star_\varepsilon$ as test function in \eqref{J13,2019weakrescaled} and using Young's inequality,  \eqref{J15,2019}, and estimates \eqref{derx1} and \eqref{derx2} provide
\begin{equation}\nonumber\begin{array}{lll}\displaystyle{\exists c\in]0,+\infty[\,\,:\int_{\Omega^{c,1}_\varepsilon\cup \Omega^{c,3}_\varepsilon}\left(  \varepsilon^\alpha\vert\partial_{x_1}  \varphi_\varepsilon\vert^2+\varepsilon^{-\alpha} \vert\partial_{x_2}  \varphi_\varepsilon \vert^2 \right)dx+
\int_{\Omega^{c,2}_\varepsilon}  \vert\nabla  \varphi_\varepsilon \vert^2dx
}\\\\ \leq  c\left(\varepsilon^{-2}+  \varepsilon^{-\alpha}\right), \quad\forall\varepsilon,
\end{array}\end{equation}
which implies \eqref{J18,2019estimates}.
\end{proof}

\section{The case $\alpha=2$\label{casesssalpha=2}}
This section is devoted to proving Theorem \ref{main theoremapril24,2019}  when  $\alpha=2$.
\subsection{{\it A priori} estimates \label{stime dettagliate alpha=2}}
Proposition \ref{J18,2019Proposition} immediately implies the following result.
\begin{Corollary} For every $\varepsilon$, let $\varphi_\varepsilon$ be the unique solution to \eqref{J13,2019weakrescaled} with $\alpha=2$. Then,
\begin{equation}\label{J18,2019estimatesalfa=2}\exists c\in]0,+\infty[\quad:\quad\left\{\begin{array}{lll}\displaystyle{ \Vert\varepsilon^2\partial_{x_1}  \varphi_\varepsilon\Vert_{L^2(\Omega^{c,1}_\varepsilon\cup \Omega^{c,3}_\varepsilon)}\leq  c,}\\\\
\displaystyle{  \Vert\partial_{x_2}  \varphi_\varepsilon \Vert_{L^2(\Omega^{c,1}_\varepsilon\cup \Omega^{c,3}_\varepsilon)}\leq  c, 
}\\\\ \displaystyle{ \Vert\varepsilon\nabla  \varphi_\varepsilon \Vert_{L^2(\Omega^{c,2}_\varepsilon)}\leq  c,}
\end{array}\right.\quad\forall\varepsilon.\end{equation}
\end{Corollary}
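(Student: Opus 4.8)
The plan is to read the three bounds directly off Proposition \ref{J18,2019Proposition} by specializing to $\alpha=2$ and then absorbing the residual powers of $\varepsilon$ into the quantities named in the statement. I expect no genuine obstacle here: the estimates \eqref{J18,2019estimates} hold for arbitrary $\alpha$, and $\alpha=2$ is simply the value at which the two competing exponents on each right-hand side coincide, so the whole argument is purely algebraic.

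First I would treat the tangential derivative on $\Omega^{c,1}_\varepsilon\cup\Omega^{c,3}_\varepsilon$. Setting $\alpha=2$ in the first line of \eqref{J18,2019estimates} gives $\int_{\Omega^{c,1}_\varepsilon\cup\Omega^{c,3}_\varepsilon}\vert\partial_{x_1}\varphi_\varepsilon\vert^2\,dx\leq c(\varepsilon^{-4}+\varepsilon^{-4})=2c\varepsilon^{-4}$. Multiplying through by $\varepsilon^4$ and taking square roots yields $\Vert\varepsilon^2\partial_{x_1}\varphi_\varepsilon\Vert_{L^2(\Omega^{c,1}_\varepsilon\cup\Omega^{c,3}_\varepsilon)}\leq c$, which is the first bound in \eqref{J18,2019estimatesalfa=2}.

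For the normal derivative on the same set, $\alpha=2$ turns the second line of \eqref{J18,2019estimates} into $\varepsilon^{\alpha-2}+1=2$, so the estimate $\Vert\partial_{x_2}\varphi_\varepsilon\Vert_{L^2(\Omega^{c,1}_\varepsilon\cup\Omega^{c,3}_\varepsilon)}\leq c$ holds with no rescaling at all. Finally, the third line becomes $\varepsilon^{-2}+\varepsilon^{-2}=2\varepsilon^{-2}$; multiplying by $\varepsilon^2$ and taking square roots gives $\Vert\varepsilon\nabla\varphi_\varepsilon\Vert_{L^2(\Omega^{c,2}_\varepsilon)}\leq c$. Relabelling the constant in each line then produces \eqref{J18,2019estimatesalfa=2}, completing the argument.
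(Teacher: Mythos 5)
Your proposal is correct and is exactly the argument the paper intends: the Corollary is stated as an immediate consequence of Proposition \ref{J18,2019Proposition}, and specializing \eqref{J18,2019estimates} to $\alpha=2$, multiplying by $\varepsilon^4$, $1$, and $\varepsilon^2$ respectively, and taking square roots is all that is required. Nothing further is needed.
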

The next task is devoted to prove the following {\it a priori} estimate.
\begin{Proposition} \label{J21,2019prop} For every $\varepsilon$, let $\varphi_\varepsilon$ be the unique solution to \eqref{J13,2019weakrescaled} with $\alpha=2$. Then,
\begin{equation}\label{J18,2019last}\exists c\in]0,+\infty[\,\,:\Vert \varphi_\varepsilon  \Vert_{L^2(\Omega^c_\varepsilon)}\leq c, \quad\forall\varepsilon.
\end{equation}
\end{Proposition}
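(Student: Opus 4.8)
The plan is to establish the stronger pointwise bound $0\le\varphi_\varepsilon\le1$ a.e.\ in $\Omega^c_\varepsilon$, from which \eqref{J18,2019last} is immediate: since $\Omega^c_\varepsilon\subset\,]0,L[\,\times\,]0,l_3[$ has measure bounded independently of $\varepsilon$, the $L^\infty$ bound yields $\|\varphi_\varepsilon\|_{L^2(\Omega^c_\varepsilon)}\le\sqrt{L\,l_3}$. The natural tool is a weak maximum principle, and two equivalent routes are available. Since $\varphi_\varepsilon=\phi_\varepsilon\circ T_{\varepsilon,\alpha}$ with $T_{\varepsilon,\alpha}$ the bijection \eqref{J24,2019trasf}, it suffices to prove $0\le\phi_\varepsilon\le1$ for the harmonic solution $\phi_\varepsilon$ of \eqref{J13,2019weak}; composition with a bijection does not alter the range, so the bound transfers verbatim to $\varphi_\varepsilon$. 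Alternatively, and this is the route I would present since it stays entirely within the rescaled framework used in the sequel, one argues directly on the anisotropic formulation \eqref{J13,2019weakrescaled}.

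First I would carry out the truncation. Take $\psi:=(\varphi_\varepsilon-1)^+$. Because $\varphi_\varepsilon=1$ on $\Gamma^a_\varepsilon$ and $\varphi_\varepsilon=0$ on $\Gamma^b_\varepsilon$, one has $\psi=0$ on $\Gamma_\varepsilon$, so $\psi\in H^1_{\Gamma_\varepsilon}(\Omega^c_\varepsilon,0)$ is admissible (the map $t\mapsto t^+$ is Lipschitz, hence preserves $H^1$ and the zero trace, with $\nabla\psi=\mathbf{1}_{\{\varphi_\varepsilon>1\}}\nabla\varphi_\varepsilon$). Inserting $\psi$ into \eqref{J13,2019weakrescaled} and using that $\nabla\varphi_\varepsilon\cdot\nabla\psi=|\nabla\psi|^2$ componentwise, the weak identity collapses to the anisotropic energy of $\psi$,
\begin{equation}\label{truncenergy}
\int_{\Omega^{c,1}_\varepsilon\cup\Omega^{c,3}_\varepsilon}\left(\varepsilon^\alpha|\partial_{x_1}\psi|^2+\varepsilon^{-\alpha}|\partial_{x_2}\psi|^2\right)dx+\int_{\Omega^{c,2}_\varepsilon}\left(D_\varepsilon|\partial_{x_1}\psi|^2+D^{-1}_\varepsilon|\partial_{x_2}\psi|^2\right)dx=0,
\end{equation}
whose coefficients $\varepsilon^\alpha,\varepsilon^{-\alpha},D_\varepsilon,D^{-1}_\varepsilon$ are all strictly positive (for $D_\varepsilon$ by \eqref{J15,2019}). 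Hence every partial derivative of $\psi$ vanishes, so $\psi$ is constant on each connected component of $\Omega^c_\varepsilon$, and being zero on the Dirichlet boundary it vanishes identically, i.e.\ $\varphi_\varepsilon\le1$. The symmetric choice $\psi:=(\varphi_\varepsilon)^-$, which also vanishes on $\Gamma_\varepsilon$, gives $\varphi_\varepsilon\ge0$ in the same way.

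The step that requires genuine care, and which I expect to be the main obstacle, is the implication \emph{constant on a component $+$ zero trace on the Dirichlet boundary $\Rightarrow$ identically zero}. One must verify that each connected component of $\Omega^c_\varepsilon$ actually meets $\Gamma_\varepsilon=\Gamma^a_\varepsilon\cup\Gamma^b_\varepsilon$, since a component abutting only the Neumann part $\Gamma$ would leave the component constant in \eqref{truncenergy} undetermined and destroy the argument. This is guaranteed by the geometry of Section \ref{geometryapril2019}: every maximal piece of the vacuum gap is trapped between a rotor boundary and a stator boundary, so its boundary necessarily contains a portion of $\Gamma^a_\varepsilon$ or of $\Gamma^b_\varepsilon$, on which $(\varphi_\varepsilon-1)^+$ (respectively $(\varphi_\varepsilon)^-$) vanishes; this forces the corresponding constant to be zero. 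Once this connectivity bookkeeping is settled, the bound $0\le\varphi_\varepsilon\le1$ and hence \eqref{J18,2019last} follow immediately, uniformly in $\varepsilon$.
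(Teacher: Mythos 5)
Your proof is correct, but it is a genuinely different argument from the one in the paper. You prove the stronger pointwise bound $0\le\varphi_\varepsilon\le1$ by Stampacchia truncation: the test functions $(\varphi_\varepsilon-1)^+$ and $\varphi_\varepsilon^-$ are indeed admissible (both have zero trace on $\Gamma_\varepsilon$, since trace commutes with truncation), the anisotropic energy identity \eqref{truncenergy} does collapse as you claim because all the coefficients are strictly positive (for $D_\varepsilon$ this follows already from \eqref{J24,2019trasfpartcoff} and $l_1+2<l_2$, without invoking the limit \eqref{J15,2019}), and your connectivity bookkeeping is sound: $\partial\Omega^c_\varepsilon=\Gamma^a_\varepsilon\cup\Gamma^b_\varepsilon\cup\Gamma$, and no bounded component can have its boundary contained in the two lateral segments $\Gamma$ alone, so every component carries zero Dirichlet data somewhere and the constants vanish. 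The paper instead stays entirely within the energy framework: on $\Omega^{c,1}_\varepsilon\cup\Omega^{c,3}_\varepsilon$ it combines the Dirichlet condition with the bound $\Vert\partial_{x_2}\varphi_\varepsilon\Vert_{L^2}\le c$ from \eqref{J18,2019estimatesalfa=2} (a Poincar\'e inequality in the $x_2$ direction), while on $\Omega^{c,2}_\varepsilon$ it decomposes the domain into the $O(\varepsilon)$-wide vertical channels $(\varepsilon P+\varepsilon k)\times]l_1,l_2[$, uses the lateral Dirichlet data $\varphi_\varepsilon=1$ or $\varphi_\varepsilon=0$ on the finger sides, and applies the fundamental theorem of calculus in $x_1$; the crucial point is that the $O(\varepsilon)$ channel width exactly compensates the $O(\varepsilon^{-1})$ growth of $\Vert\partial_{x_1}\varphi_\varepsilon\Vert_{L^2(\Omega^{c,2}_\varepsilon)}$ permitted by the third estimate in \eqref{J18,2019estimatesalfa=2}. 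Your route buys more with less: it yields a uniform $L^\infty$ bound, valid for every $\alpha\ge0$ and independent of the \emph{a priori} estimates of Proposition \ref{J18,2019Proposition}, so it would also dispense with the separate argument of Proposition \ref{J21,2019prop6Aprile2019} in the case $\alpha>2$ (where the paper only gets a weighted bound $\Vert\varepsilon^{(\alpha-2)/2}\varphi_\varepsilon\Vert_{L^2(\Omega^{c,2}_\varepsilon)}\le c$ on the middle region). What the paper's approach buys is methodological uniformity: the strip-wise Poincar\'e argument is the same device that drives the rest of the two-scale analysis, and its weighted variant \eqref{J19,2019Aprile62019} is reused verbatim in the $\alpha>2$ case.
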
 
\begin{proof}
The Dirichlet boundary condition of $\varphi_\varepsilon $ on $\Gamma_\varepsilon$ and the second estimate in \eqref{J18,2019estimatesalfa=2} provide that
\begin{equation}\nonumber\exists c\in]0,+\infty[\,\,:\Vert \varphi_\varepsilon  \Vert_{L^2(\Omega^{c,1}_\varepsilon\cup \Omega^{c,3}_\varepsilon)}\leq c, \quad\forall\varepsilon.
\end{equation}

The main task is   to prove that
\begin{equation}\label{J19,2019ultimo}\exists c\in]0,+\infty[\,\,:\Vert \varphi_\varepsilon  \Vert_{L^2(\Omega^{c,2}_\varepsilon)}\leq c, \quad\forall\varepsilon,\end{equation}
which completes the proof.
To this aim, set 
$$P=]0,1[\setminus\left(\overline{\omega^a}\cup\overline{\omega^b}\right)=]0,\zeta_1[\cup]\zeta_2,\zeta_3[\cup]\zeta_4,1[.$$
Fix $\varepsilon$. Then, one has
\begin{equation}\label{J19,2019}
\Vert \varphi_\varepsilon  \Vert^2_{L^2(\Omega^{c,2}_\varepsilon)}=\sum_{k=0}^{\frac{L}{\varepsilon}-1}\int_{\left(\varepsilon P+\varepsilon k\right)\times ]l_1,l_2[} \vert \varphi_\varepsilon  \vert^2 dx.\end{equation}
Now fix $k\in\left\{0,\cdots , \frac{L}{\varepsilon}-1\right\}$. Then, if $x_1\in \varepsilon P+\varepsilon k$, one   of the following three cases holds true:
$$x_1\in]\varepsilon k, \varepsilon\zeta_1+\varepsilon k[, \quad x_1\in]\varepsilon\zeta_2+\varepsilon k, \varepsilon\zeta_3+\varepsilon k[,\quad x_1\in]\varepsilon\zeta_4+\varepsilon  k,\varepsilon(1+k)[  .$$ In the first case, since
$$\varphi_\varepsilon=1,\hbox{ on }\{\varepsilon\zeta_1+\varepsilon k\}\times]l_1,l_2[,$$
one has
$$\varphi_\varepsilon(x_1,x_2)=1-\int_{x_1}^{\varepsilon\zeta_1+\varepsilon k}\partial_{x_1}\varphi_\varepsilon(t,x_2)dt,\quad\forall  x_1\in]\varepsilon k, \varepsilon\zeta_1+\varepsilon k[,\hbox{ for a.e. }x_2\in]l_1,l_2[,$$
which implies
\begin{equation}\label{J19,2019I} \int_{l_1}^{l_2}\int_{\varepsilon k}^{\varepsilon\zeta_1+\varepsilon k}\vert \varphi_\varepsilon(x_1,x_2)\vert^2dx_1dx_2\leq 2(l_2-l_1)\varepsilon+2\varepsilon^2  \int_{l_1}^{l_2}\int_{\varepsilon k}^{\varepsilon\zeta_1+\varepsilon k}\vert \partial_{x_1}\varphi_\varepsilon(x_1,x_2)\vert^2dx_1dx_2.
\end{equation}
Similarly,   since
$$\varphi_\varepsilon=0 , \hbox { on }\{\varepsilon\zeta_3+\varepsilon k\}\times]l_1,l_2[ \hbox { and on }\{\varepsilon\zeta_4+\varepsilon k\}\times]l_1,l_2[,$$
in the second and in the third  case
one has
\begin{equation}\label{J19,2019II} \int_{l_1}^{l_2}\int_{\varepsilon\zeta_2+\varepsilon k}^{\varepsilon\zeta_3+\varepsilon k}\vert \varphi_\varepsilon(x_1,x_2)\vert^2dx_1dx_2\leq 2\varepsilon^2 \int_{l_1}^{l_2}\int_{\varepsilon\zeta_2+\varepsilon k}^{\varepsilon\zeta_3+\varepsilon k} \vert \partial_{x_1}\varphi_\varepsilon(x_1,x_2)\vert^2dx_1dx_2
\end{equation}
and
\begin{equation}\label{J19,2019III} \int_{l_1}^{l_2}\int_{\varepsilon\zeta_4+\varepsilon k}^{\varepsilon(1+k)}\vert \varphi_\varepsilon(x_1,x_2)\vert^2dx_1dx_2\leq 2\varepsilon^2\int_{l_1}^{l_2}\int_{\varepsilon\zeta_4+\varepsilon k}^{\varepsilon(1+k)} \vert \partial_{x_1}\varphi_\varepsilon(x_1,x_2)\vert^2dx_1dx_2.
\end{equation}
Adding \eqref{J19,2019I}, \eqref{J19,2019II}, and \eqref{J19,2019III} gives
\begin{equation}\nonumber
\int_{\left(\varepsilon P+\varepsilon k\right)\times ]l_1,l_2[} \vert \varphi_\varepsilon  \vert^2 dx\leq 2(l_2-l_1)\varepsilon+2\varepsilon^2 \int_{\left(\varepsilon P+\varepsilon k\right)\times ]l_1,l_2[} \vert \partial_{x_1}\varphi_\varepsilon  \vert^2 dx,\end{equation}
from which, summing up $k\in\left\{0,\cdots , \frac{L}{\varepsilon}-1\right\}$ and using \eqref{J19,2019} and  the third estimate in \eqref{J18,2019estimatesalfa=2}, one obtains \eqref{J19,2019ultimo}.
\end{proof}
\subsection{Weak convergence results\label{wweeaakkconv}}
The next proposition is devoted to  studying the limit  in $ \Omega^{c,2}$, as $\varepsilon$ tends to zero,  of problem \eqref{J13,2019weakrescaled} with $\alpha=2$.
\begin{Proposition} \label{Proposizione Monda3}For every $\varepsilon$, let $\varphi_\varepsilon$ be the unique solution to \eqref{J13,2019weakrescaled} with $\alpha=2$. Set  $$\varphi_{\varepsilon,2}={\varphi_\varepsilon}_{|_{\Omega_\varepsilon^{c,2}}}$$  and
\begin{equation}\label{F13,2019}\overline{\varphi_{\varepsilon,2}}=\left\{\begin{array}{ll}\varphi_{\varepsilon,2},\hbox{ a.e. in }\Omega_\varepsilon^{c,2},\\\\ \displaystyle{1,\hbox{ a.e.  in }\bigcup_{k=0}^{\frac{L}{\varepsilon}-1}\left(\varepsilon\omega^a+\varepsilon k\right)\times ]l_1+1,l_2-1[,}\\\\ \displaystyle{0, \hbox{ a.e. in }\bigcup_{k=0}^{\frac{L}{\varepsilon}-1}\left(\varepsilon\omega^b+\varepsilon k\right)\times ]l_1+1,l_2-1[.}\end{array}\right.\end{equation}
Let 
\begin{equation}\label{J24,2019all} \varphi_2:y\in[0,1]\longrightarrow \left\{\begin{array}{ll}\dfrac{y+1-\zeta_4}{\zeta_1-\zeta_4+1}, \hbox{ if }y\in[0,\zeta_1],\\\\
1, \hbox{ if }y\in[\zeta_1,\zeta_2],\\\\ \dfrac{y-\zeta_3}{\zeta_2-\zeta_3}, \hbox{ if }y\in[\zeta_2,\zeta_3],\\\\
0, \hbox{ if }y\in[\zeta_3,\zeta_4],\\\\\dfrac{y-\zeta_4}{\zeta_1-\zeta_4+1}, \hbox{ if }y\in[\zeta_4,1].
\end{array}\right.
\end{equation}
Then, 
\begin{equation}\label{Monda1bisterforse}\left\{\begin{array}{ll}\overline{\varphi_{\varepsilon,2}}\hbox{ two scale converges to } \varphi_2,\\\\
\varepsilon\partial_{x_1}\overline{\varphi_{\varepsilon,2}}\hbox{ two scale converges to }\partial_{y} \varphi_2,\\\\
\varepsilon\partial_{x_2}\overline{\varphi_{\varepsilon,2}}\hbox{ two scale converges to }0,\end{array}\right.\end{equation}
as $\varepsilon$ tends to zero.
\end{Proposition}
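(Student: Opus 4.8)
The plan is to combine the a priori bounds with the compactness of two-scale convergence, then to identify the limit through a one-dimensional cell problem forced by the Dirichlet data carried by the fingers. First I would record that the filling-in construction \eqref{F13,2019} makes $\overline{\varphi_{\varepsilon,2}}$ an $H^1$ function on the fixed strip $\Omega^{c,2}$: on the rotor fingers its trace equals $1$ and on the stator fingers it equals $0$, matching the constant values by which it is extended, so no jump is created across $\Gamma_\varepsilon$. Consequently $\nabla\overline{\varphi_{\varepsilon,2}}$ vanishes on the fingers and coincides with $\nabla\varphi_\varepsilon$ on the vacuum, whence the third estimate of the Corollary and Proposition \ref{J21,2019prop} yield
$$\|\overline{\varphi_{\varepsilon,2}}\|_{L^2(\Omega^{c,2})}+\|\varepsilon\nabla\overline{\varphi_{\varepsilon,2}}\|_{L^2(\Omega^{c,2})}\leq c.$$
By the compactness of two-scale convergence (\cite{N}, \cite{A}), along a subsequence $\overline{\varphi_{\varepsilon,2}}$ two-scale converges to some $\varphi_0\in L^2(\Omega^{c,2};H^1_\#(0,1))$ and $\varepsilon\partial_{x_1}\overline{\varphi_{\varepsilon,2}}$ two-scale converges to $\partial_y\varphi_0$, the fast variable $y=x_1/\varepsilon$ being the one conjugate to $x_1$.

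Next I would dispose of the two easy points. Testing $\varepsilon\partial_{x_2}\overline{\varphi_{\varepsilon,2}}$ against $\eta(x)\theta(x_1/\varepsilon)$ with $\eta\in C_c^\infty(\Omega^{c,2})$ and $\theta\in C^\infty_\#([0,1])$, then integrating by parts in $x_2$, produces a factor $\varepsilon$ in front of an integral controlled by $\|\overline{\varphi_{\varepsilon,2}}\|_{L^2}$, so $\varepsilon\partial_{x_2}\overline{\varphi_{\varepsilon,2}}$ two-scale converges to $0$. For the Dirichlet values, I would test against $\eta(x)\theta(x_1/\varepsilon)$ with $\theta$ supported in $\omega^a$: since $\overline{\varphi_{\varepsilon,2}}\equiv 1$ there, passing to the two-scale limit forces $\varphi_0(x,y)=1$ for a.e. $x$ and a.e. $y\in\omega^a$, and the same argument on $\omega^b$ gives $\varphi_0(x,y)=0$ for $y\in\omega^b$. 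In particular $\partial_y\varphi_0=0$ on $\omega^a\cup\omega^b$.

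The core step is the cell equation on $P=]0,1[\setminus(\overline{\omega^a}\cup\overline{\omega^b})$. I would insert into \eqref{J13,2019weakrescaled} the oscillating test function $\psi_\varepsilon=\varepsilon\,\eta(x)\,\theta(x_1/\varepsilon)$, with $\eta\in C_c^\infty(\Omega^{c,2})$ and $\theta\in C^\infty_\#([0,1])$ vanishing on $\overline{\omega^a}\cup\overline{\omega^b}$; such a $\psi_\varepsilon$ is admissible, being supported in the vacuum part of the strip and vanishing on $\Gamma_\varepsilon$, and it kills the $\Omega^{c,1}_\varepsilon\cup\Omega^{c,3}_\varepsilon$ contribution. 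Expanding $\partial_{x_1}\psi_\varepsilon=\eta\,\theta'(x_1/\varepsilon)+\varepsilon\,\partial_{x_1}\eta\,\theta(x_1/\varepsilon)$ and using $\varepsilon\nabla\varphi_\varepsilon=O(1)$, the weak equation reads, after multiplication by $\varepsilon$,
$$\int_{\Omega^{c,2}_\varepsilon}D_\varepsilon\,(\varepsilon\partial_{x_1}\varphi_\varepsilon)\,\eta\,\theta'(x_1/\varepsilon)\,dx+\varepsilon\,R_\varepsilon=0,$$
where $R_\varepsilon$ stays bounded. Letting $\varepsilon\to 0$, using \eqref{J15,2019} and the two-scale convergence of $\varepsilon\partial_{x_1}\varphi_\varepsilon$ to $\partial_y\varphi_0$ (the integrand already vanishes on the fingers, where $\theta'=0$), I obtain
$$\int_{\Omega^{c,2}}\eta(x)\left(\int_0^1\partial_y\varphi_0(x,y)\,\theta'(y)\,dy\right)dx=0,$$
hence $\int_0^1\partial_y\varphi_0(x,\cdot)\,\theta'\,dy=0$ for a.e. $x$ and every admissible $\theta$. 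Together with the boundary values $\varphi_0=1$ on $\omega^a$, $\varphi_0=0$ on $\omega^b$ and the periodicity, this is exactly the weak form of $\partial_{yy}\varphi_0=0$ in $P$ with those Dirichlet data, whose unique solution is the piecewise-affine profile $\varphi_2$ of \eqref{J24,2019all}. Since the limit does not depend on $x$ and is uniquely determined, the whole sequence converges, which yields \eqref{Monda1bisterforse}.

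I expect the main obstacle to be the bookkeeping of scales in the test-function computation: one must verify that the filled-in extension is genuinely $H^1$, so that $\varepsilon\partial_{x_1}\overline{\varphi_{\varepsilon,2}}$ two-scale converges to $\partial_y\varphi_0$ with no spurious interface term, and that the $\varepsilon^{-1}$ contribution is the sole leading term, the remainder $R_\varepsilon$ being controlled solely by the third estimate of the Corollary. The passage from the resulting Euler--Lagrange identity to the explicit profile $\varphi_2$ is then an elementary one-dimensional computation.
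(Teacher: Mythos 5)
Your proposal is correct and takes essentially the same route as the paper: two-scale compactness from the bounds of Proposition \ref{J21,2019prop} and the third estimate in \eqref{J18,2019estimatesalfa=2}, identification of the values $1$ on $\omega^a$ and $0$ on $\omega^b$ by testing against oscillating functions supported on the fingers, derivation of the periodic cell equation by inserting oscillating test functions vanishing on the fingers (your choice $\psi_\varepsilon=\varepsilon\,\eta\,\theta(x_1/\varepsilon)$ followed by multiplication of the equation by $\varepsilon$ is exactly the paper's choice $\psi=\varepsilon^{2}\chi_1\chi_2(x_1/\varepsilon)$ in \eqref{J23,2019seraIIJ24}), and uniqueness for the limit cell problem to upgrade subsequence convergence to convergence of the whole sequence. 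The only cosmetic differences are that you verify explicitly the $H^1$-matching of the extension \eqref{F13,2019} and prove the vanishing of the two-scale limit of $\varepsilon\partial_{x_2}\overline{\varphi_{\varepsilon,2}}$ by hand, steps which the paper absorbs into the standard two-scale compactness statement.
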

\begin{proof}
Proposition \ref{J21,2019prop} and the third estimate in \eqref{J18,2019estimatesalfa=2} ensure the existence of a subsequence of $\{\varepsilon\}$, still denoted by $\{\varepsilon\}$,  and $u_2 \in L^2\left(\Omega^{c,2}, H^1_{\hbox{per}}(]0,1[)\right)$   (in possible dependence on the subsequence) such that
 \begin{equation}\label{Monda1bister}\left\{\begin{array}{ll}\overline{\varphi_{\varepsilon,2}}\hbox{ two scale converges to } u_2,\\\\
\varepsilon\partial_{x_1}\overline{\varphi_{\varepsilon,2}}\hbox{ two scale converges to }\partial_{y} u_2,\\\\
\varepsilon\partial_{x_2}\overline{\varphi_{\varepsilon,2}}\hbox{ two scale converges to }0,\end{array}\right.\end{equation}
as $\varepsilon$ tends to zero.

The next step is devoted to proving  that
 \begin{equation}\label{J23,2019fgd} u_2=1, \hbox { a.e. in } \Omega^{c,2}\times\omega^a.\end{equation}
Indeed, the definition of  $\overline{ \varphi_{\varepsilon,2}}$ gives
\begin{equation}\label{chiacchiereIIIchie}\begin{array}{ll}\displaystyle{ \int_{\Omega^{c,2}}\overline{\varphi_{\varepsilon,2}}(x_1,x_2)\psi \left(x_1,x_2,\frac{x_1}{\varepsilon}\right) dx_1dx_2=  \int_{\Omega^{c,2}}\psi \left(x_1,x_2,\frac{x_1}{\varepsilon}\right) dx_1dx_2,}  \\\\
\forall\psi\in C_0^\infty(\Omega^{c,2}\times\omega^a),\quad\forall\varepsilon.
\end{array}
\end{equation}
Passing to the limit, as $\varepsilon$ tends to zero, in \eqref{chiacchiereIIIchie} and using  the first limit in \eqref{Monda1bister} provide
\begin{equation}\nonumber\begin{array}{ll}\displaystyle{ \int_{\Omega^{c,2}\times\omega^a}u_2(x_1,x_2,y)\psi \left(x_1,x_2,y\right) dx_1dx_2dy=  \int_{\Omega^{c,2}\times\omega^a}\psi \left(x_1,x_2,y\right) dx_1dx_2dy, }\\\\\forall\psi\in C_0^\infty(\Omega^{c,2}\times\omega^a),
\end{array}
\end{equation}
which implies \eqref{J23,2019fgd}.

\noindent Similarly, one proves that
\begin{equation}\label{J23,2019dfg} u_2=0, \hbox { a.e. in } \Omega^{c,2}\times\omega^b.\end{equation} Finally, choosing $\displaystyle{\psi=\varepsilon^2\chi_1(x_1,x_2)\chi_2\left(\frac{x_1}{\varepsilon}\right)}$ with $\chi_1\in C_0^\infty\left(\Omega^{c,2}\right)$ and $\chi_2\in H^1_{\hbox{per}}\left(]0,1[\right)$ such that $\chi_2=0$ in $\omega^a\cup\omega^b$ as test function in \eqref{J13,2019weakrescaled} with $\alpha=2$ gives 
\begin{equation}\label{J23,2019seraIIJ24}\begin{array}{lll} \displaystyle{D_\varepsilon \varepsilon^2\int_{\Omega^{c,2}}  \partial_{x_1} \overline{ \varphi_{\varepsilon,2}}\left(\partial_{x_1}\chi_1(x_1,x_2)\chi_2\left(\frac{x_1}{\varepsilon}\right)+\varepsilon^{-1}\chi_1(x_1,x_2)\partial_{y}\chi_2\left(\frac{x_1}{\varepsilon}\right)\right)dx_1dx_2} \\\\
\displaystyle{+D_\varepsilon^{-1}\varepsilon^2\int_{\Omega^{c,2}} \partial_{x_2}  \overline{\varphi_{\varepsilon,2} }\partial_{x_2}\chi_1(x_1,x_2)\chi_2\left(\frac{x_1}{\varepsilon}\right)dx_1dx_2=0,}\\\\ \forall \chi_1\in C_0^\infty\left(\Omega^{c,2}\right), \quad\forall \chi_2\in H^1_{\hbox{per}}\left(]0,1[\right)\,\,:\,\, \chi_2=0,\hbox{ in } \omega^a\cup\omega^b,\quad\forall\varepsilon.
\end{array}\end{equation}
Passing to the limit, as $\varepsilon$ tends to zero, in \eqref{J23,2019seraIIJ24} and using  the second and third limits in   \eqref{Monda1bister}, and \eqref{J15,2019}
provide that, for a.e. $(x_1,x_2)$ in $ \Omega^{c,2}$,
\begin{equation}\label{J23,2019seraIJ24}\begin{array}{ll}\displaystyle{ \int_{]0,1[\setminus\left(\omega^a\cup\omega^b\right)}\partial_{y}u_2(x_1,x_2,y)\partial_{y}\chi_2(y)dy=0, }\\\\  \forall \chi_2\in H^1_{\hbox{per}}\left(]0,1[\right)\,\,:\,\, \chi_2=0,\hbox{ in } \omega^a\cup\omega^b.
\end{array}
\end{equation}
Problem \eqref{J23,2019fgd}, \eqref{J23,2019dfg}, and \eqref{J23,2019seraIJ24} is equivalent to the following problem independent of $(x_1,x_2)$
\begin{equation}\left\{\begin{array}{ll}\partial^2_{y^2}u_2=0,\hbox{ in }]0,1[\setminus\left(\omega^a\cup\omega^b\right),\\\\
u_2=1, \hbox {  in } \omega^a,\\\\
u_2=0, \hbox {  in } \omega^b,\\\\
  u_2(0)=u_2(1),\\\\
   \partial_y u_2(0)=\partial_yu_2(1),
\end{array}\right.\end{equation}
which admits  \eqref{J24,2019all}  as unique solution.
Consequently, limits in  \eqref{Monda1bister} hold for the whole sequence and \eqref{Monda1bisterforse} is satisfied.
\end{proof}

The next proposition is devoted to  studying the limit in $\Omega^{c,3}$ and in $\Omega^{c,1}$, as $\varepsilon$ tends to zero,  of problem \eqref{J13,2019weakrescaled} with $\alpha=2$.

\begin{Proposition} \label{Proposizione Eliquis1}For every $\varepsilon$, let $\varphi_\varepsilon$ be the unique solution to \eqref{J13,2019weakrescaled} with $\alpha=2$. Set  $$\varphi_{\varepsilon,3}={\varphi_\varepsilon}_{|_{\Omega_\varepsilon^{c,3}}},\quad \varphi_{\varepsilon,1}={\varphi_\varepsilon}_{|_{\Omega_\varepsilon^{c,1}}},$$  
\begin{equation}\label{Eliquis2}\widetilde{\varphi_{\varepsilon,3}}\left\{\begin{array}{ll}\varphi_{\varepsilon,3},\hbox{ a.e.  in }\Omega_\varepsilon^{c,3},\\\\ 1,\hbox{  a.e. in }\Omega^{c,3}\setminus \Omega_\varepsilon^{c,3},\end{array}\right.\end{equation}
and
\begin{equation}\label{Eliquis4}\widehat{\varphi_{\varepsilon,1}}=\left\{\begin{array}{ll}\varphi_{\varepsilon,1},\hbox{ a.e. in }\Omega_\varepsilon^{c,1},\\\\ 0,\hbox{ a.e. in }\Omega^{c,1}\setminus \Omega_\varepsilon^{c,1}.\end{array}\right.\end{equation}
Moreover, let
\begin{equation}\label{F12,2019}\varphi_3:(x_1,x_2,y)\in \Omega^{c,3}\times]0,1[\longrightarrow\left\{\begin{array}{ll}x_2+1-l_2, \hbox{ if } y\in \omega^b,\\\\1, \hbox{ if } y\in]0,1[\setminus \omega^b,\end{array}\right.
\end{equation}
and
\begin{equation}\label{F12,2019bis}\varphi_1:(x_1,x_2,y)\in \Omega^{c,1}\times]0,1[\longrightarrow\left\{\begin{array}{ll}x_2-l_1, \hbox{ if } y\in \omega^a,\\\\0, \hbox{ if } y\in]0,1[\setminus \omega^a.\end{array}\right.
\end{equation}
Then
\begin{equation}\label{Monda1bisterforseter}\left\{\begin{array}{ll}\widetilde{\varphi_{\varepsilon,3}}\hbox{ two scale converges to } \varphi_3,\\\\
\partial_{x_2}\widetilde{\varphi_{\varepsilon,3}}\hbox{ two scale converges to }\partial_{x_2} \varphi_3,\end{array}\right.\end{equation}
and
\begin{equation}\label{Monda1bisterforsequater}\left\{\begin{array}{ll}\widehat{\varphi_{\varepsilon,1}}\hbox{ two scale converges to } \varphi_1,\\\\
\partial_{x_2}\widehat{\varphi_{\varepsilon,1}}\hbox{ two scale converges to }\partial_{x_2} \varphi_1,\end{array}\right.\end{equation}
as $\varepsilon$ tends to zero.
\end{Proposition}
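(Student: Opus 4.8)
The plan is to reproduce, on $\Omega^{c,3}$ and (symmetrically) on $\Omega^{c,1}$, the scheme used for $\Omega^{c,2}$ in Proposition \ref{Proposizione Monda3}. First I record the compactness. By Proposition \ref{J21,2019prop} the extensions $\widetilde{\varphi_{\varepsilon,3}}$ and $\widehat{\varphi_{\varepsilon,1}}$ are bounded in $L^2$, and since they are obtained by gluing in the constant rotor (resp.\ stator) value across the finger columns, they are continuous and the second estimate in \eqref{J18,2019estimatesalfa=2} bounds $\partial_{x_2}\widetilde{\varphi_{\varepsilon,3}}$ and $\partial_{x_2}\widehat{\varphi_{\varepsilon,1}}$ in $L^2$. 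Hence, along a subsequence, $\widetilde{\varphi_{\varepsilon,3}}$ two-scale converges to some $\Phi$; because $\partial_{x_2}$ commutes with the microscopic variable $x_1/\varepsilon$, testing $\int \partial_{x_2}\widetilde{\varphi_{\varepsilon,3}}\,\psi(x_1,x_2,x_1/\varepsilon)=-\int \widetilde{\varphi_{\varepsilon,3}}\,\partial_{x_2}\psi$ and passing to the limit shows $\partial_{x_2}\widetilde{\varphi_{\varepsilon,3}}$ two-scale converges to $\partial_{x_2}\Phi$. The entire content is then the identification $\Phi=\varphi_3$.

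For the identification I split $]0,1[$ into the three column types. In the rotor-finger columns $\omega^a$ the extension is identically $1$, so testing with $\psi$ supported in $\Omega^{c,3}\times\omega^a$ gives $\Phi=1$ there, exactly as in \eqref{J23,2019fgd}. The Dirichlet data are inherited by $\Phi$ as $x_2$-boundary values: $\Phi(\cdot,l_2,\cdot)=1$ on the rotor plate and $\Phi(\cdot,l_2-1,y)=0$ for $y\in\omega^b$ on the stator-finger tops. To obtain the profile I would insert $\psi=\varepsilon^2\chi(x_1,x_2)\,\theta(x_1/\varepsilon)$ into \eqref{J13,2019weakrescaled}: the prefactor $\varepsilon^2$ cancels the weight $\varepsilon^{-2}$, so the vertical term $\partial_{x_2}\varphi_\varepsilon\,\partial_{x_2}\chi\,\theta$ survives and two-scale converges to $\int\partial_{x_2}\Phi\,\partial_{x_2}\chi\,\theta$, while the horizontal term carries an extra factor $\varepsilon\,(\varepsilon^2\partial_{x_1}\varphi_\varepsilon)$ and drops out by \eqref{J18,2019estimatesalfa=2}. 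Taking $\theta$ supported in $\omega^b$ yields $\partial^2_{x_2}\Phi=0$ in the fibre, and the Dirichlet values $0$ at $x_2=l_2-1$ and $1$ at $x_2=l_2$ force the linear profile $\Phi=x_2+1-l_2$.

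The crux is the columns $y\in P=\,]0,1[\setminus(\overline{\omega^a}\cup\overline{\omega^b})$, where the bottom $x_2=l_2-1$ is an interface with $\Omega^{c,2}$, not a Dirichlet boundary, so a priori $\Phi$ is only affine in $x_2$. Here I would take $\theta$ with compact support in one connected component of $P$ and $\chi$ extended slightly across the interface into $\Omega^{c,2}$, with $\chi(\cdot,l_2)=0$. In the limit the $\Omega^{c,2}$ vertical term vanishes since $\varepsilon\partial_{x_2}\overline{\varphi_{\varepsilon,2}}\to 0$ by \eqref{Monda1bisterforse}, and, decisively, the $\Omega^{c,2}$ horizontal term — whose limit is $\int D_0\,\partial_y\varphi_2\,\chi\,\partial_y\theta$ via \eqref{Monda1bisterforse} and \eqref{J15,2019} — also vanishes: by \eqref{J24,2019all} $\varphi_2$ is affine on each component of $P$, so $\partial^2_y\varphi_2=0$ and integration by parts against the compactly supported $\theta$ gives $\int\partial_y\varphi_2\,\partial_y\theta\,dy=0$. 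What remains is $\int_{\Omega^{c,3}\times P}\partial_{x_2}\Phi\,\partial_{x_2}\chi\,\theta=0$ with $\chi(\cdot,l_2)=0$ but $\chi(\cdot,l_2-1)$ free, which forces both $\partial^2_{x_2}\Phi=0$ and the natural condition $\partial_{x_2}\Phi(\cdot,l_2-1,\cdot)=0$ on $P$. An affine function with vanishing slope at the bottom is constant, equal to its rotor-plate value $1$; thus $\Phi=1$ on $P$ and $\Phi=\varphi_3$.

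The \emph{main obstacle} is precisely this $P$-column step: showing that, in the limit, the interface with $\Omega^{c,2}$ behaves as an insulating (Neumann) boundary for the thin layer. This rests on two points that must be combined carefully — the harmonicity of the region-two profile $\varphi_2$ on each component of $P$, which kills the horizontal interface flux, and the $\varepsilon^2$ scaling of the test function, which simultaneously suppresses the region-two vertical flux and the region-three horizontal term. The region $\Omega^{c,1}$ is treated by the mirror argument, exchanging the roles of $\omega^a$ and $\omega^b$ and the plate values $1$ and $0$, which gives the linear profile $x_2-l_1$ on $\omega^a$, the solid value $0$ on $\omega^b$, and $0$ on $P$ by the same natural-boundary-condition mechanism. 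Since $\varphi_3$ and $\varphi_1$ are uniquely determined, the limits hold for the whole sequence.
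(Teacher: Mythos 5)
Your proposal is correct and follows essentially the same route as the paper: two-scale compactness from Proposition \ref{J21,2019prop} and \eqref{J18,2019estimatesalfa=2}, identification of the fibre profiles via $\varepsilon^2$-scaled oscillating test functions, and --- the crux --- the limit Neumann condition at the interface with $\Omega^{c,2}$ obtained by letting the test function cross that interface and exploiting that $\partial_y\varphi_2$ is constant on each component of $P$, which is exactly how the paper kills the $\Omega^{c,2}$ term in \eqref{F2,2019}--\eqref{F5,2019Pasqua2019} using \eqref{J24,2019all}. The only differences are cosmetic: the paper uses a single test function spanning all three subdomains (yielding the coupled equation \eqref{F5,2019}) where you localize near each interface separately, and it proves in detail the trace identification \eqref{J22,2019}--\eqref{J23,2019III} that you assert as ``inherited'' Dirichlet data.
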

\begin{proof}  The proof will be  developed in several steps.

Proposition \ref{J21,2019prop} and the second estimate in \eqref{J18,2019estimatesalfa=2} ensure the existence of a subsequence of $\{\varepsilon\}$, still denoted by $\{\varepsilon\}$,  $u_3$,  $\xi\in L^2(\Omega^{c,3}\times]0,1[)$, and $w$, $z\in L^2(]0,L[\times]0,1[)$  (in possible dependence on the subsequence) satisfying 
\begin{equation} \label{F15,2019a} \widetilde{\varphi_{\varepsilon,3}}\hbox{ two scale converges to } u_3,
\end{equation}
and
\begin{equation}\label{J21,2019I}\left\{ \begin{array}{ll}
\partial_{x_2}\widetilde{\varphi_{\varepsilon,3}}\hbox{ two scale converges to }\xi,\\\\ \hbox{ the trace of }
\widetilde{{\varphi_{\varepsilon,3}}}   \hbox{ on } ]0,L[\times\{l_2-1\}\hbox{ two scale converges to } w,
\\\\ \hbox{ the trace of }
\widetilde{{\varphi_{\varepsilon,3}}}   \hbox{ on } ]0,L[\times\{l_2\}\hbox{ two scale converges to } z,
\end{array}\right.
\end{equation}
as $\varepsilon$ tends to zero.

The first step is devoted to proving that
\begin{equation}\label{J21,2019II}\xi=\partial_{x_2} u_3, \hbox { a.e. in } \Omega^{c,3}\times]0,1[.
\end{equation}
Indeed, integration by parts gives
\begin{equation}\label{chiacchiereI}\begin{array}{ll}\displaystyle{ \int_{\Omega^{c,3}}\partial_{x_2}\widetilde{\varphi_{\varepsilon,3}}(x_1,x_2)\psi \left(x_1,x_2,\frac{x_1}{\varepsilon}\right) dx_1dx_2}\\\\
\displaystyle{ =- \int_{\Omega^{c,3}}\widetilde{\varphi_{\varepsilon,3}}(x_1,x_2)\partial_{x_2}\psi \left(x_1,x_2,\frac{x_1}{\varepsilon}\right) dx_1dx_2,\quad\forall\psi\in C_0^\infty(\Omega^{c,3}\times]0,1[),\quad\forall\varepsilon.}
\end{array}
\end{equation}
Passing to the limit, as $\varepsilon$ tends to zero, in \eqref{chiacchiereI} and using  \eqref{F15,2019a} and the first  limit in \eqref{J21,2019I}  provide
\begin{equation}\nonumber\begin{array}{ll}\displaystyle{ \int_{\Omega^{c,3}\times]0,1[}\xi(x_1,x_2,y)\psi (x_1,x_2,y) dx_1dx_2 dy}\\\\\displaystyle{=-
\int_{\Omega^{c,3}\times]0,1[} u_3 (x_1,x_2,y) \partial_{x_2}\psi (x_1,x_2,y) dx_1dx_2 dy, \quad\forall\psi\in C_0^\infty(\Omega^{c,3}\times]0,1[),
}
\end{array}
\end{equation}
which implies \eqref{J21,2019II}. Combining the first limit in \eqref{J21,2019I}  with \eqref{J21,2019II}
gives
 \begin{equation}\label{Monda2}\partial_{x_2}\widetilde{\varphi_{\varepsilon,3}}\hbox{ two scale converges to }\partial_{x_2} u_3,\end{equation}
as $\varepsilon$ tends to zero.

The fact that $u_3$ and  $\xi\in L^2(\Omega^{c,3}\times]0,1[)$ combined with  \eqref{J21,2019II} provides that for a.e. $y\in ]0,1[$ $u_3(\cdot,\cdot,y)$ has  traces on $]0,l[\times\{l_2-1\}$ and on $]0,l[\times\{l_2\}$ belonging to $L^2(]0,l[\times\{l_2-1\})$ and to $L^2(]0,l[\times\{l_2\})$, respectively. 
The second step is devoted to proving that
\begin{equation}\label{J22,2019}w(x_1,y)=u_3(x_1, l_2-1,y),  \hbox{  a.e  in }  ]0,L[\times ]0,1[.
\end{equation}
Indeed,  integration by parts gives 
\begin{equation}\label{chiacchiereII}\begin{array}{ll}\displaystyle{ \int_{\Omega^{c,3}}\partial_{x_2}\widetilde{\varphi_{\varepsilon,3}}(x_1,x_2)\psi \left(x_1,\frac{x_1}{\varepsilon}\right) (l_2-x_2)dx_1dx_2}\\\\
\displaystyle{ = \int_{\Omega^{c,3}}\widetilde{\varphi_{\varepsilon,3}}(x_1,x_2)\psi \left(x_1,\frac{x_1}{\varepsilon}\right) dx_1dx_2}
\displaystyle{ - \int_{]0.L[}\widetilde{\varphi_{\varepsilon,3}}(x_1,l_2-1)\psi \left(x_1,\frac{x_1}{\varepsilon}\right) dx_1,}\\\\
\forall\psi\in C_0^\infty(]0,L[\times]0,1[),\quad\forall\varepsilon.
\end{array}
\end{equation}
Passing to the limit, as $\varepsilon$ tends to zero, in \eqref{chiacchiereII} and using
  \eqref{F15,2019a},  the second  limit in \eqref{J21,2019I}, and  \eqref{Monda2}  provide
\begin{equation}\nonumber\begin{array}{ll}\displaystyle{ \int_{\Omega^{c,3}\times]0,1[}\partial_{x_2}u_3(x_1,x_2,y)\psi \left(x_1,y\right)(l_2-x_2) dx_1dx_2dy}\\\\
\displaystyle{ = \int_{\Omega^{c,3}\times]0,1[}u_3(x_1,x_2,y)\psi \left(x_1,y\right) dx_1dx_2dy}
\displaystyle{ - \int_{]0.L[\times]0,1[}w(x_1,y)\psi \left(x_1,y\right) dx_1dy,}\\\\
\forall\psi\in C_0^\infty(]0,L[\times]0,1[),
\end{array}
\end{equation}
that is
\begin{equation}\nonumber\begin{array}{ll}\displaystyle{ \int_{]0,L[\times]0,1[}w(x_1,y)\psi \left(x_1,y\right) dx_1dy,= \int_0^1\left(\int_0^L w(x_1,y)\psi(x_1,y)dx_1\right)dy=}\\\\ \displaystyle{ \int_0^1\left(\int_{\Omega^{c,3}} \left( u_3(x_1,x_2,y)\psi(x_1,y) -\partial_{x_2}u_3(x_1,x_2,y)\psi(x_1,y)(l_2-x_2) \right) dx_1dx_2\right)dy}\\\\
\displaystyle{=\int_0^1\left(\int_0^L u_3(x_1,l_2-1,y) \psi(x_1,y)dx_1\right)dy=\int_{]0,L[\times]0,1[} u_3(x_1,l_2-1,y) \psi \left(x_1,y\right) dx_1dy,}\\\\\forall\psi\in C_0^\infty(]0,L[\times]0,1[),
\end{array}
\end{equation}
which implies \eqref{J22,2019}.
Similarly, one proves that
\begin{equation}\label{J22,2019bis}z(x_1,y)=u_3(x_1, l_2,y),  \hbox{  a.e  in }  ]0,L[\times ]0,1[.
\end{equation}

The third step is devoted to proving that
\begin{equation}\label{J23,2019I} u_3(x_1,l_2-1, y)=0,\hbox { a.e. in } ]0,L[\times\omega^b,\end{equation}
Indeed, the boundary condition of $\varphi_\varepsilon$ on $\Gamma^b_\varepsilon$ gives
\begin{equation}\label{chiacchiereIV}\begin{array}{ll}\displaystyle{ \int_{]0,L[}\widetilde{\varphi_{\varepsilon,3}}(x_1,l_2-1)\psi \left(x_1,\frac{x_1}{\varepsilon}\right) dx_1=  0, \quad\forall\psi\in C_0^\infty(]0,L[\times\omega^b),\quad\forall\varepsilon.}
\end{array}
\end{equation}
Passing to the limit, as $\varepsilon$ tends to zero, in \eqref{chiacchiereIV} and using the second limit in \eqref{J21,2019I}  and \eqref{J22,2019} provide
\begin{equation}\nonumber\begin{array}{ll}\displaystyle{ \int_{]0,L[\times\omega^b}u_3(x_1,l_2-1,y)\psi \left(x_1,y\right) dx_1dy=  0, \quad\forall\psi\in C_0^\infty(]0,L[\times\omega^b),}
\end{array}
\end{equation}
which implies \eqref{J23,2019I}. Similarly, one proves 
\begin{equation}\label{J23,2019III} u_3(x_1,l_2, y)=1,\hbox { a.e. in } ]0,L[\times]0,1[.\end{equation}

Arguing as in the proof of \eqref{J23,2019fgd} gives
\begin{equation}\label{J23,2019} u_3=1, \hbox { a.e. in } \Omega^{c,3}\times\omega^a,
\end{equation}

The fourth step is devoted to proving that 
 \begin{equation}\label{J23,2019seraI}\begin{array}{ll}\displaystyle{ \int_{\Omega^{c,3}\times\left(]0,1[\setminus\omega^a\right)}\partial_{x_2}u_3(x_1,x_2,y)\partial_{x_2}\chi(x_1,x_2,y)dx_1dx_2dy=0,}\\\\ \forall\chi\in C_0^\infty\left(\Omega^{c,3}\times\left(]0,1[\setminus\omega^a\right)\right).\end{array}
\end{equation}
Indeed, choosing $\displaystyle{\psi=\varepsilon^2\chi\left(x_1,x_2,\frac{x_1}{\varepsilon}\right)}$ with $\chi\in C_0^\infty\left(\Omega^{c,3}\times\left(]0,1[\setminus\omega^a\right)\right)$ as test function in \eqref{J13,2019weakrescaled} with $\alpha=2$ gives 
\begin{equation}\label{J23,2019seraII}\begin{array}{lll} \displaystyle{\int_{\Omega^{c,3}} \varepsilon^4 \partial_{x_1} \widetilde{ \varphi_{\varepsilon,3}}\left(\partial_{x_1}\chi\left(x_1,x_2,\frac{x_1}{\varepsilon}\right)+\varepsilon^{-1}\partial_{y}\chi\left(x_1,x_2,\frac{x_1}{\varepsilon}\right)\right)dx_1dx_2} \\\\
\displaystyle{+\int_{\Omega^{c,3}}  \partial_{x_2}  \widetilde{\varphi_{\varepsilon,3} }\partial_{x_2}\chi\left(x_1,x_2,\frac{x_1}{\varepsilon}\right)dx_1dx_2=0,}\quad\forall\chi\in C_0^\infty\left(\Omega^{c,3}\times\left(]0,1[\setminus\omega^a\right)\right)),\quad\forall\varepsilon.
\end{array}\end{equation}
Passing to the limit, as $\varepsilon$ tends to zero, in \eqref{J23,2019seraII} and using the first  estimate in \eqref{J18,2019estimatesalfa=2}, \eqref{Monda2}, and \eqref{J23,2019} provide \eqref{J23,2019seraI}.

In a similar way, one proves that
there exist  a subsequence of $\{\varepsilon\}$, still denoted by $\{\varepsilon\}$ and  $u_1 \in L^2(\Omega^{c,1}\times]0,1[)$  (in possible dependence on the subsequence)   such that
\begin{equation}\label{Monda1bis}\widehat{\varphi_{\varepsilon,1}}\hbox{ two scale converges to } u_1,\end{equation}
as $\varepsilon$ tends to zero. Moreover,  $\partial_{x_2}u_1\in  L^2(\Omega^{c,1}\times]0,1[)$ and
\begin{equation}\label{Monda2bis}\partial_{x_2}\widehat{\varphi_{\varepsilon,1}}\hbox{ two scale converges to }\partial_{x_2} u_1,\end{equation}
as $\varepsilon$ tends to zero. Furthermore,
\begin{equation}\label{J23,2019bis} u_1=0, \hbox { a.e. in } \Omega^{c,1}\times\omega^b,
\end{equation}
\begin{equation}\label{J23,2019IIbis} u_1(x_1,l_1+1, y)=1,\hbox { a.e. in } ]0,L[\times\omega^a,\end{equation}
\begin{equation}\label{J23,2019IIIbis} u_1(x_1,l_1, y)=0,\hbox { a.e. in } ]0,L[\times]0,1[,\end{equation}
and
 \begin{equation}\label{J23,2019seraIbis}\begin{array}{ll}\displaystyle{ \int_{\Omega^{c,1}\times\left(]0,1[\setminus\omega^b\right)}\partial_{x_2}u_1(x_1,x_2,y)\partial_{x_2}\chi(x_1,x_2,y)dx_1dx_2dy=0,}\\\\ \forall\chi\in C_0^\infty\left(\Omega^{c,1}\times\left(]0,1[\setminus\omega^b\right)\right).\end{array}
\end{equation}

The last step is devoted to proving that
\begin{equation}\label{F5,2019}\begin{array}{ll}\displaystyle{ \int_{\Omega^{c,3}\times\left(]0,1[\setminus\left(\omega^a\cup\omega^b\right)\right)}\partial_{x_2}u_3(x_1,x_2,y)\partial_{x_2}\chi(x_1,x_2,y)dx_1dx_2dy}\\\\ \displaystyle{ +\int_{\Omega^{c,1}\times\left(]0,1[\setminus\left(\omega^a\cup\omega^b\right)\right)}\partial_{x_2}u_1(x_1,x_2,y)\partial_{x_2}\chi(x_1,x_2,y)dx_1dx_2dy,} \\\\ \forall\chi\in C_0^\infty\left(\Omega^c\times\left(]0,1[\setminus\left(\omega^a\cup\omega^b\right)\right)\right).\end{array}
\end{equation}
Indeed, choosing $\displaystyle{\psi=\varepsilon^2\chi\left(x_1,x_2,\frac{x_1}{\varepsilon}\right)}$ with $\chi\in C_0^\infty\left(\Omega^c\times\left(]0,1[\setminus\left(\omega^a\cup\omega^b\right)\right)\right)$ as test function in \eqref{J13,2019weakrescaled} with $\alpha=2$ gives 

\begin{equation}\label{F2,2019}\begin{array}{lll} \displaystyle{\int_{\Omega^{c,3}} \varepsilon^4 \partial_{x_1} \widetilde{ \varphi_{\varepsilon,3}}\left(\partial_{x_1}\chi\left(x_1,x_2,\frac{x_1}{\varepsilon}\right)+\varepsilon^{-1}\partial_{y}\chi\left(x_1,x_2,\frac{x_1}{\varepsilon}\right)\right)dx_1dx_2} \\\\
\displaystyle{+\int_{\Omega^{c,3}}  \partial_{x_2}  \widetilde{\varphi_{\varepsilon,3} }\partial_{x_2}\chi\left(x_1,x_2,\frac{x_1}{\varepsilon}\right)dx_1dx_2}\\\\
\displaystyle{+\int_{ \Omega^{c,1}} \varepsilon^4 \partial_{x_1} \widehat{ \varphi_{\varepsilon,3}}\left(\partial_{x_1}\chi\left(x_1,x_2,\frac{x_1}{\varepsilon}\right)+\varepsilon^{-1}\partial_{y}\chi\left(x_1,x_2,\frac{x_1}{\varepsilon}\right)\right)dx_1dx_2} \\\\
\displaystyle{+\int_{ \Omega^{c,1}}  \partial_{x_2}  \widehat{\varphi_{\varepsilon,3} }\partial_{x_2}\chi\left(x_1,x_2,\frac{x_1}{\varepsilon}\right)dx_1dx_2}\\\\
\displaystyle{+D_\varepsilon \varepsilon^2\int_{\Omega^{c,2}}  \partial_{x_1} \overline{ \varphi_{\varepsilon,2}}\left(\partial_{x_1}\chi\left(x_1,x_2,\frac{x_1}{\varepsilon}\right)+\varepsilon^{-1}\partial_{y}\chi\left(x_1,x_2,\frac{x_1}{\varepsilon}\right)\right)dx_1dx_2} \\\\
\displaystyle{+D_\varepsilon^{-1}\varepsilon^2\int_{\Omega^{c,2}} \partial_{x_2}  \overline{\varphi_{\varepsilon,2} }\partial_{x_2}\chi\left(x_1,x_2,\frac{x_1}{\varepsilon}\right)dx_1dx_2=0,}\\\\ \forall\chi\in C_0^\infty\left(\Omega^c\times\left(]0,1[\setminus\left(\omega^a\cup\omega^b\right)\right)\right),\quad\forall\varepsilon.
\end{array}\end{equation}
Passing to the limit, as $\varepsilon$ tends to zero, in \eqref{F2,2019} and using the first estimate in \eqref{J18,2019estimatesalfa=2}, \eqref{Monda2}, \eqref{J23,2019}, \eqref{Monda2bis}, \eqref{J23,2019bis}, \eqref{J15,2019},  and the second and third limit in \eqref{Monda1bisterforse} provide
\begin{equation}\label{F5,2019Pasqua2019}\begin{array}{ll}\displaystyle{ \int_{\Omega^{c,3}\times\left(]0,1[\setminus\left(\omega^a\cup\omega^b\right)\right)}\partial_{x_2}u_3(x_1,x_2,y)\partial_{x_2}\chi(x_1,x_2,y)dx_1dx_2dy}\\\\ \displaystyle{ +\int_{\Omega^{c,1}\times\left(]0,1[\setminus\left(\omega^a\cup\omega^b\right)\right)}\partial_{x_2}u_1(x_1,x_2,y)\partial_{x_2}\chi(x_1,x_2,y)dx_1dx_2dy+}\\\\ \displaystyle{ \frac{l_2-l_1}{l_2-l_1-2}. \int_{\Omega^{c,2}\times\left(]0,1[\setminus\left(\omega^a\cup\omega^b\right)\right)}
\partial_{y}\varphi_2(x_1,x_2,y)\partial_{y}\chi(x_1,x_2,y)dx_1dx_2dy,
}
 \\\\ \forall\chi\in C_0^\infty\left(\Omega^c\times\left(]0,1[\setminus\left(\omega^a\cup\omega^b\right)\right)\right).\end{array}
\end{equation}
which implies  \eqref{F5,2019}, since the last integral in \eqref{F5,2019Pasqua2019}  is zero due to   \eqref{J24,2019all}.

Finally, \eqref{J23,2019I},  \eqref{J23,2019III}, \eqref{J23,2019}, \eqref{J23,2019seraI}, and \eqref{J23,2019bis}-\eqref{F5,2019} assert that $u_3$ and $u_1$  solve the following problems
\begin{equation}\label{F5bis',2019}\left\{\begin{array}{ll}u_3=1, \hbox { in } \Omega^{c,3}\times\omega^a,\\\\  \left\{\begin{array}{ll}\partial^2_{x^2_2}u_3 (x_1,x_2,y)=0, \hbox{ in }\Omega^{c,3}\times\left(]0,1[\setminus\omega^a\right),\\\\
u_3(x_1,l_2, y)=1,\hbox { in } ]0,L[\times]0,1[,\\\\
u_3(x_1,l_2-1, y)=0,\hbox { in } ]0,L[\times\omega^b,\\\\
\partial_{x_2} u_3(x_1,l_2-1, y)=0,\hbox { in } ]0,L[\times]0,1[\setminus\left(\omega^a\cup\omega^b\right),\end{array}\right.
\end{array}\right.
\end{equation}
and
\begin{equation}\label{F5ter",2019}\left\{\begin{array}{ll}u_1=0, \hbox { in } \Omega^{c,1}\times\omega^b,\\\\  \left\{\begin{array}{ll}
\partial^2_{x^2_2}u_1 (x_1,x_2,y)=0, \hbox{ in }\Omega^{c,1}\times\left(]0,1[\setminus\omega^b\right),\\\\
u_1(x_1,l_1, y)=0,\hbox { in } ]0,L[\times]0,1[,\\\\
u_1(x_1,l_1+1, y)=1,\hbox {  in } ]0,L[\times\omega^a,\\\\
\partial_{x_2} u_1(x_1,l_1+1, y)=0,\hbox { in } ]0,L[\times]0,1[\setminus\left(\omega^a\cup\omega^b\right),\end{array}\right.
\end{array}\right.
\end{equation}
respectively, which means that $u_3$ and $u_1$ are given by  \eqref{F12,2019} and \eqref{F12,2019bis}, respectively.  Consequently, \eqref{F15,2019a}, \eqref{Monda2}, \eqref{Monda1bis}, and \eqref{Monda2bis} hold true for the whole sequence and  \eqref{Monda1bisterforseter} and \eqref{Monda1bisterforsequater} are satisfied.
\end{proof}

The following result is an immediate consequence of Proposition \ref{Proposizione Monda3} and Proposition \ref{Proposizione Eliquis1}.
\begin{Corollary}
For every $\varepsilon$, let $\varphi_\varepsilon$ be the unique solution to \eqref{J13,2019weakrescaled} with $\alpha=2$  and let  $\overline{\varphi_{\varepsilon,2}}$,
$\widetilde{\varphi_{\varepsilon,3}}$, and $\widehat{\varphi_{\varepsilon,1}}$ be defined by \eqref{F13,2019}, \eqref{Eliquis2}, and  \eqref{Eliquis4}, respectively. Moreover,  let  $\varphi_2$,  $\varphi_3$,  and $\varphi_1$ be defined by \eqref{J24,2019all}, \eqref{F12,2019}, and \eqref{F12,2019bis}, respectively. Then
\begin{equation} \nonumber\overline{\varphi_{\varepsilon,2}}\rightharpoonup \frac{1}{2}\left(1+\hbox{meas}(\omega^a)-\hbox{meas}(\omega^b)\right),\quad\varepsilon\partial_{x_1}\overline{\varphi_{\varepsilon,2}}\rightharpoonup0,\quad \varepsilon\partial_{x_2}\overline{\varphi_{\varepsilon,2}}\rightharpoonup0, \hbox{ weakly in }L^2(\Omega^{c,2}),
\end{equation}
\begin{equation} \nonumber\widetilde{\varphi_{\varepsilon,3}}\rightharpoonup (x_2-l_2)\hbox{meas}(\omega^b)+1, \quad\partial_{x_2}\widetilde{\varphi_{\varepsilon,3}}\rightharpoonup\hbox{meas}(\omega^b),\hbox{ weakly in }L^2(\Omega^{c,3}),
\end{equation}
and
\begin{equation} \nonumber\widehat{\varphi_{\varepsilon,1}}\rightharpoonup (x_2-l_1)\hbox{meas}(\omega^a),\quad  \quad\partial_{x_2}\widehat{\varphi_{\varepsilon,1}}\rightharpoonup\hbox{meas}(\omega^a),\hbox{ weakly in }L^2(\Omega^{c,1}),
\end{equation}
as $\varepsilon$ tends to zero.
\end{Corollary}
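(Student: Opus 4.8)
The plan is to invoke the one standard property of two-scale convergence not yet used explicitly: if a sequence $v_\varepsilon$ bounded in $L^2$ two-scale converges to a limit $v_0(x,y)$, then $v_\varepsilon$ converges \emph{weakly} in $L^2$ to the cell average $\int_0^1 v_0(\cdot,y)\,dy$ (see \cite{A}). This is immediate from the definition of two-scale convergence upon restricting to test functions $\psi=\psi(x)$ independent of the fast variable $y$, since then $\psi(x,x_1/\varepsilon)=\psi(x)$ and the weak $L^2$ limit is forced to coincide with the $y$-average of $v_0$. Granting this, the corollary reduces entirely to computing, for each two-scale limit produced in Propositions \ref{Proposizione Monda3} and \ref{Proposizione Eliquis1}, its mean value in $y$ over $]0,1[$.

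First I would treat $\Omega^{c,2}$. By Proposition \ref{Proposizione Monda3}, $\overline{\varphi_{\varepsilon,2}}$, $\varepsilon\partial_{x_1}\overline{\varphi_{\varepsilon,2}}$ and $\varepsilon\partial_{x_2}\overline{\varphi_{\varepsilon,2}}$ two-scale converge to $\varphi_2$, $\partial_y\varphi_2$ and $0$, where $\varphi_2$ is the piecewise affine profile \eqref{J24,2019all}. The corresponding weak $L^2$ limits are therefore $\int_0^1\varphi_2(y)\,dy$, $\int_0^1\partial_y\varphi_2(y)\,dy$ and $0$. The second vanishes because $\varphi_2$ is $1$-periodic, so $\int_0^1\partial_y\varphi_2\,dy=\varphi_2(1)-\varphi_2(0)=0$. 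For the first I would integrate the five pieces of \eqref{J24,2019all}: the two affine connecting pieces on $]0,\zeta_1[$ and $]\zeta_4,1[$, the constants $1$ on $\omega^a$ and $0$ on $\omega^b$, and the affine descent on $]\zeta_2,\zeta_3[$; collecting and simplifying yields $\frac12\bigl(1+\hbox{meas}(\omega^a)-\hbox{meas}(\omega^b)\bigr)$, the asserted limit.

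Next I would treat $\Omega^{c,3}$ and $\Omega^{c,1}$, which are analogous and even simpler, since the two-scale limits $\varphi_3$ and $\varphi_1$ of \eqref{F12,2019}, \eqref{F12,2019bis} are (piecewise) explicit in $y$, being an $x_2$-affine expression on one cell interval and a constant elsewhere. For $\widetilde{\varphi_{\varepsilon,3}}$ one computes $\int_0^1\varphi_3(\cdot,y)\,dy=(x_2+1-l_2)\,\hbox{meas}(\omega^b)+\bigl(1-\hbox{meas}(\omega^b)\bigr)=(x_2-l_2)\,\hbox{meas}(\omega^b)+1$, while $\partial_{x_2}\varphi_3$ equals $1$ on $\omega^b$ and $0$ off it, so its cell average is $\hbox{meas}(\omega^b)$. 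The same computation on $\Omega^{c,1}$, using that $\varphi_1$ equals $x_2-l_1$ on $\omega^a$ and $0$ elsewhere, gives the averages $(x_2-l_1)\,\hbox{meas}(\omega^a)$ and $\hbox{meas}(\omega^a)$. These are precisely the three displayed weak limits.

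Since each step is the restatement of an already-established two-scale convergence followed by an elementary one-dimensional integration, I do not expect any genuine obstacle; the only point requiring a little care is the bookkeeping of the five pieces of $\varphi_2$ in the $\Omega^{c,2}$ computation, where one must verify that the two connecting affine pieces together with the descent on $]\zeta_2,\zeta_3[$ combine into the symmetric expression $\frac12\bigl(1+\hbox{meas}(\omega^a)-\hbox{meas}(\omega^b)\bigr)$.
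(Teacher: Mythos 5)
Your proposal is correct and is exactly the argument the paper intends: the corollary is stated there as an "immediate consequence" of Propositions \ref{Proposizione Monda3} and \ref{Proposizione Eliquis1}, the implicit ingredient being precisely the standard fact that two-scale convergence implies weak $L^2$ convergence to the cell average of the two-scale limit. Your averaging computations (including the combination of the three affine pieces of $\varphi_2$ into $\frac{1}{2}\left(1+\hbox{meas}(\omega^a)-\hbox{meas}(\omega^b)\right)$) all check out.
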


\subsection{Corrector results\label{corrrresult}}
Th following proposition is devoted to proving the energies convergence.
\begin{Proposition}\label{encon2019} For every $\varepsilon$, let $\varphi_\varepsilon$ be the unique solution to \eqref{J13,2019weakrescaled} with $\alpha=2$. Moreover,  let   $\varphi_1$,  $\varphi_3$, and $\varphi_2$,  be defined by  \eqref{F12,2019bis},  \eqref{F12,2019}, and \eqref{J24,2019all},  respectively. Then
\begin{equation}\label{F15,2019energyconv}\begin{array}{lll} 
\displaystyle{\lim_{\varepsilon\rightarrow 0}\bigg[\int_{\Omega^{c,1}_\varepsilon\cup \Omega^{c,3}_\varepsilon}\left( \left\vert  \varepsilon^2\partial_{x_1}  \varphi_\varepsilon \right\vert^2+\left\vert\partial_{x_2}  \varphi_\varepsilon \right\vert^2\right)dx} \\\\ \displaystyle{+
\int_{\Omega^{c,2}_\varepsilon}\left(  D_\varepsilon \left\vert\varepsilon\partial_{x_1}  \varphi_\varepsilon \right\vert^2+D^{-1}_\varepsilon\left\vert\varepsilon\partial_{x_2}  \varphi_\varepsilon \right\vert^2 \right)dx\bigg]
}\\\\
\displaystyle{=
\int_{\Omega^{c,1}\times\omega^a}\left\vert\partial_{x_2}  \varphi_1\right\vert^2dxdy+\int_{\Omega^{c,3}\times\omega^b}\left\vert\partial_{x_2}  \varphi_3\right\vert^2dxdy}\\\\ \displaystyle{+ \frac{l_2-l_1}{l_2-l_1-2} \int_{\Omega^{c,2}\times\left(]0,1[\setminus\left(\omega^a\cup\omega^b\right)\right)}\left\vert\partial_{y}  \varphi_2\right\vert^2dxdy.}
\end{array}\end{equation}
\end{Proposition}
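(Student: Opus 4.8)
The plan is to read the bracketed quantity as $\varepsilon^2$ times the energy of the rescaled problem and to exploit $\varphi^\star_\varepsilon$ as a test function. First I would observe that the left-hand side of \eqref{F15,2019energyconv} is precisely $\varepsilon^2 a_\varepsilon(\varphi_\varepsilon,\varphi_\varepsilon)$, where
\[
a_\varepsilon(u,v)=\int_{\Omega^{c,1}_\varepsilon\cup\Omega^{c,3}_\varepsilon}\left(\varepsilon^2\partial_{x_1}u\,\partial_{x_1}v+\varepsilon^{-2}\partial_{x_2}u\,\partial_{x_2}v\right)dx+\int_{\Omega^{c,2}_\varepsilon}\left(D_\varepsilon\partial_{x_1}u\,\partial_{x_1}v+D_\varepsilon^{-1}\partial_{x_2}u\,\partial_{x_2}v\right)dx
\]
is the bilinear form of \eqref{J13,2019weakrescaled} with $\alpha=2$; indeed, multiplying each coefficient by $\varepsilon^2$ converts $\varepsilon^2,\varepsilon^{-2},D_\varepsilon,D_\varepsilon^{-1}$ into exactly the weights attached to $|\varepsilon^2\partial_{x_1}\varphi_\varepsilon|^2,|\partial_{x_2}\varphi_\varepsilon|^2,D_\varepsilon|\varepsilon\partial_{x_1}\varphi_\varepsilon|^2,D_\varepsilon^{-1}|\varepsilon\partial_{x_2}\varphi_\varepsilon|^2$. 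Since \eqref{F16,2019z1} forces $\varphi^\star_\varepsilon=1$ on $\Gamma^a_\varepsilon$ and $\varphi^\star_\varepsilon=0$ on $\Gamma^b_\varepsilon$, the difference $\varphi_\varepsilon-\varphi^\star_\varepsilon$ belongs to $H^1_{\Gamma_\varepsilon}(\Omega^c_\varepsilon,0)$ and is admissible in \eqref{J13,2019weakrescaled}; testing with it yields $a_\varepsilon(\varphi_\varepsilon,\varphi_\varepsilon)=a_\varepsilon(\varphi_\varepsilon,\varphi^\star_\varepsilon)$, so the task reduces to computing $\lim_{\varepsilon\to0}\varepsilon^2 a_\varepsilon(\varphi_\varepsilon,\varphi^\star_\varepsilon)$.

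Next I would pass to the limit in $\varepsilon^2 a_\varepsilon(\varphi_\varepsilon,\varphi^\star_\varepsilon)$ term by term, writing $\partial_{x_1}\varphi^\star_\varepsilon=\varepsilon^{-1}\partial_y\varphi^\star(x_1/\varepsilon,x_2)$ and $\partial_{x_2}\varphi^\star_\varepsilon=\partial_{x_2}\varphi^\star(x_1/\varepsilon,x_2)$, both admissible oscillating test functions because $\varphi^\star\in C^\infty$ is $1$-periodic in its first variable. Three of the five resulting integrals carry a spare factor $\varepsilon$ in front of a quantity bounded via \eqref{J18,2019estimatesalfa=2} (the two $\partial_{x_1}$-contributions on $\Omega^{c,1}_\varepsilon\cup\Omega^{c,3}_\varepsilon$ and the $\partial_{x_2}$-contribution on $\Omega^{c,2}_\varepsilon$) and hence vanish by Cauchy--Schwarz. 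For the survivors I would replace $\varphi_\varepsilon$ by its extensions $\widehat{\varphi_{\varepsilon,1}},\widetilde{\varphi_{\varepsilon,3}},\overline{\varphi_{\varepsilon,2}}$, whose $x_2$- (resp. $\varepsilon\partial_{x_1}$-) derivatives vanish on the inserted holes, so that the perforated and extended integrals coincide; then the two-scale convergences \eqref{Monda1bisterforse}, \eqref{Monda1bisterforseter}, \eqref{Monda1bisterforsequater} together with \eqref{J15,2019} give
\[
\lim_{\varepsilon\to0}\varepsilon^2 a_\varepsilon(\varphi_\varepsilon,\varphi^\star_\varepsilon)=\int_{\Omega^{c,1}\times]0,1[}\partial_{x_2}\varphi_1\,\partial_{x_2}\varphi^\star\,dxdy+\int_{\Omega^{c,3}\times]0,1[}\partial_{x_2}\varphi_3\,\partial_{x_2}\varphi^\star\,dxdy+\frac{l_2-l_1}{l_2-l_1-2}\int_{\Omega^{c,2}\times]0,1[}\partial_y\varphi_2\,\partial_y\varphi^\star\,dxdy.
\]

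Finally I would identify each cross term with the corresponding square. On $\Omega^{c,1}$ and $\Omega^{c,3}$ one has $\partial_{x_2}\varphi_1=\mathbf 1_{\omega^a}$ and $\partial_{x_2}\varphi_3=\mathbf 1_{\omega^b}$ by \eqref{F12,2019bis} and \eqref{F12,2019}, so the inner $x_2$-integral of $\partial_{x_2}\varphi^\star$ telescopes; using $\varphi^\star(\cdot,l_1)=0$, $\varphi^\star=1$ on $\omega^a\times\{l_1+1\}$, $\varphi^\star=0$ on $\omega^b\times\{l_2-1\}$ and $\varphi^\star(\cdot,l_2)=1$ from \eqref{F16,2019z1}, these two limits equal $L\,\hbox{meas}(\omega^a)=\int_{\Omega^{c,1}\times\omega^a}|\partial_{x_2}\varphi_1|^2$ and $L\,\hbox{meas}(\omega^b)=\int_{\Omega^{c,3}\times\omega^b}|\partial_{x_2}\varphi_3|^2$. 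For $\Omega^{c,2}$ the key remark is that, for a.e. $x_2$, the function $\chi_2:=\varphi^\star(\cdot,x_2)-\varphi_2$ is $1$-periodic and vanishes on $\omega^a\cup\omega^b$ (there $\varphi^\star$ and $\varphi_2$ share the common value $1$ on $\omega^a$ and $0$ on $\omega^b$ by \eqref{F16,2019z1} and \eqref{J24,2019all}), hence is admissible in the cell identity \eqref{J23,2019seraIJ24} satisfied by $\varphi_2$; this converts $\int_{]0,1[\setminus(\omega^a\cup\omega^b)}\partial_y\varphi_2\,\partial_y\varphi^\star$ into $\int_{]0,1[\setminus(\omega^a\cup\omega^b)}|\partial_y\varphi_2|^2$, producing the third term of \eqref{F15,2019energyconv}. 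I expect the main obstacle to be exactly this identification step --- recognizing $\varphi^\star-\varphi_2$ as a legitimate test function for the limit cell problem and carrying out the telescoping and boundary-trace bookkeeping for $\varphi^\star$ --- together with the preliminary verification that $\varphi^\star_\varepsilon$ carries the correct Dirichlet data, which is what makes $a_\varepsilon(\varphi_\varepsilon,\varphi_\varepsilon)=a_\varepsilon(\varphi_\varepsilon,\varphi^\star_\varepsilon)$ and hence the whole reduction legitimate.
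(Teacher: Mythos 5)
Your proposal is correct and follows essentially the same route as the paper: the paper likewise tests \eqref{J13,2019weakrescaled} with $\psi=\varepsilon^2(\varphi_\varepsilon-\varphi^\star_\varepsilon)$, discards the terms carrying spare powers of $\varepsilon$ via the estimates \eqref{J18,2019estimatesalfa=2}, and passes to the two-scale limit in the surviving cross terms using Propositions \ref{Proposizione Monda3} and \ref{Proposizione Eliquis1}. The only cosmetic difference is the final identification on $\Omega^{c,2}$: you invoke the cell identity \eqref{J23,2019seraIJ24} with the test function $\varphi^\star-\varphi_2$, whereas the paper evaluates $\int\partial_y\varphi_2\,\partial_y\varphi^\star$ and $\int\vert\partial_y\varphi_2\vert^2$ explicitly from \eqref{J24,2019all} and \eqref{F16,2019z1} and checks that they coincide.
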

\begin{proof}
Choosing $\psi= \varepsilon^2\left(\varphi_\varepsilon-\varphi^\star_\varepsilon\right)$ as test function in \eqref{J13,2019weakrescaled}, where $\varphi^\star_\varepsilon$ is defined by \eqref{F16,2019z}-\eqref{F16,2019zq}, gives
\begin{equation}\label{F16,2019tk}\begin{array}{lll} 
\displaystyle{\int_{\Omega^{c,1}_\varepsilon\cup \Omega^{c,3}_\varepsilon}\left( \left\vert  \varepsilon^2\partial_{x_1}  \varphi_\varepsilon \right\vert^2+\left\vert\partial_{x_2}  \varphi_\varepsilon \right\vert^2\right)dx+
\int_{\Omega^{c,2}_\varepsilon}\left(  D_\varepsilon  \left\vert  \varepsilon\partial_{x_1}  \varphi_\varepsilon \right\vert^2+D^{-1}_\varepsilon \left\vert  \varepsilon\partial_{x_2}  \varphi_\varepsilon \right\vert^2\right)dx
}\\\\ 
\displaystyle{=\int_{\Omega^{c,1}}\left(  \varepsilon^3 \partial_{x_1}  \widehat{\varphi_{\varepsilon,1}}\left(\partial_{y}\varphi^\star\right)\left(\frac{x_1}{\varepsilon}, x_2\right)+\partial_{x_2}  \widehat{\varphi_{\varepsilon,1}}\partial_{x_2}\varphi^\star\left(\frac{x_1}{\varepsilon}, x_2\right) \right)dx}\\\\
\displaystyle{+\int_{\Omega^{c,3}}\left(  \varepsilon^3 \partial_{x_1} \widetilde{\varphi_{\varepsilon,3}}\left(\partial_{y}\varphi^\star\right)\left(\frac{x_1}{\varepsilon}, x_2\right)+\partial_{x_2} \widetilde{\varphi_{\varepsilon,3}}\partial_{x_2}\varphi^\star\left(\frac{x_1}{\varepsilon}, x_2\right)\right)dx}\\\\ \displaystyle{+
\int_{\Omega^{c,2}}\left(  D_\varepsilon \varepsilon \partial_{x_1} \overline{\varphi_{\varepsilon,2}}\left(\partial_{y}\varphi^\star\right)\left(\frac{x_1}{\varepsilon}, x_2\right)+D^{-1}_\varepsilon \varepsilon^2\partial_{x_2}  \overline{\varphi_{\varepsilon,2}} \partial_{x_2}\varphi^\star\left(\frac{x_1}{\varepsilon}, x_2\right)\right)dx,\quad\forall\varepsilon,}
\end{array}\end{equation}
where    
$\widehat{\varphi_{\varepsilon,1}}$, $\widetilde{\varphi_{\varepsilon,3}}$, $\overline{\varphi_{\varepsilon,2}}$  are defined by    \eqref{Eliquis4},  \eqref{Eliquis2},  and \eqref{F13,2019}, respectively.
Passing to the limit, as $\varepsilon$ tends to zero, in \eqref{F16,2019tk} and using  \eqref{J15,2019}, the first estimate in \eqref{J18,2019estimatesalfa=2}, Proposition \ref{Proposizione Monda3}, and  Proposition \ref{Proposizione Eliquis1} provide
\begin{equation}\label{F17,2019tkk}\begin{array}{lll} 
\displaystyle{\lim_{\varepsilon\rightarrow 0}\bigg[\int_{\Omega^{c,1}_\varepsilon\cup \Omega^{c,3}_\varepsilon}\left( \left\vert  \varepsilon^2\partial_{x_1}  \varphi_\varepsilon \right\vert^2+\left\vert\partial_{x_2}  \varphi_\varepsilon \right\vert^2\right)dx}\\\\ \displaystyle{+
\int_{\Omega^{c,2}_\varepsilon}\left(  D_\varepsilon  \left\vert  \varepsilon\partial_{x_1}  \varphi_\varepsilon \right\vert^2+D^{-1}_\varepsilon \left\vert  \varepsilon\partial_{x_2}  \varphi_\varepsilon \right\vert^2\right)dx\bigg]
}\\\\
\displaystyle{=\int_{\Omega^{c,1}\times\omega^a}\partial_{x_2}\varphi^\star dxdy+\int_{\Omega^{c,3}\times\omega^b} \partial_{x_2}\varphi^\star dxdy}
\\\\ \displaystyle{+ \frac{l_2-l_1}{l_2-l_1-2} 
\int_{\Omega^{c,2}\times\left(]0,1[\setminus\left(\omega^a\cup\omega^b\right)\right)}   \partial_{y} \varphi_2\partial_{y}\varphi^\star dxdy.}
\end{array}\end{equation}
As the third integral and fourth integral in \eqref{F17,2019tkk} are concerned, the last two lines in \eqref{F16,2019z1},    \eqref{F12,2019}, and  \eqref{F12,2019bis} ensure that
\begin{equation}\label{F17,2019tkkha}\begin{array}{lll} 
\displaystyle{\int_{\Omega^{c,1}\times\omega^a}\partial_{x_2}\varphi^\star dxdy+\int_{\Omega^{c,3}\times\omega^b} \partial_{x_2}\varphi^\star dxdy=\int_{\Omega^{c,1}\times\omega^a} 1dxdy+\int_{\Omega^{c,3}\times\omega^b} 1dxdy}\\\\ \displaystyle{=\int_{\Omega^{c,1}\times\omega^a}\left\vert\partial_{x_2}  \varphi_1\right\vert^2dxdy+  \int_{\Omega^{c,3}\times\omega^b}\left\vert\partial_{x_2}  \varphi_3\right\vert^2dxdy.
}
\end{array}\end{equation}
As the last integral in \eqref{F17,2019tkk} is concerned, the first two lines in \eqref{F16,2019z1}
and \eqref{J24,2019all} ensure that
\begin{equation}\label{F17,2019tkklss}\begin{array}{lll} 
\displaystyle{\int_{\Omega^{c,2}\times\left(]0,1[\setminus\left(\omega^a\cup\omega^b\right)\right)}   \partial_{y} \varphi_2\partial_{y}\varphi^\star dxdy=\left( \frac{1}{\zeta_1-\zeta_4+1}-\frac{1}{\zeta_2-\zeta_3}\right) \int_{\Omega^{c,2}} 1dx}\\\\
\displaystyle{= \int_{\Omega^{c,2}\times\left(]0,1[\setminus\left(\omega^a\cup\omega^b\right)\right)}\left\vert\partial_{y}  \varphi_2\right\vert^2dxdy.}
\end{array}\end{equation}
Finally, \eqref{F15,2019energyconv} follows from \eqref{F17,2019tkk}, \eqref{F17,2019tkkha}, and \eqref{F17,2019tkklss}.
\end{proof}

Proposition \ref{Proposizione Monda3},   Proposition \ref{Proposizione Eliquis1}, and Proposition \ref{encon2019} provide the following corrector results.
\begin{Proposition}\label{F19,2019corrres} For every $\varepsilon$, let $\varphi_\varepsilon$ be the unique solution to \eqref{J13,2019weakrescaled} with $\alpha=2$. Moreover,  let   $\varphi_1$,  $\varphi_3$, and $\varphi_2$,  be defined by  \eqref{F12,2019bis},  \eqref{F12,2019}, and \eqref{J24,2019all},  respectively. Then
\begin{equation}\label{F19,2019cr1}\begin{array}{lll} \displaystyle{\lim_{\varepsilon\rightarrow0}
\int_{\Omega^{c,1}_\varepsilon}\left( \left\vert  \varepsilon^2\partial_{x_1}  \varphi_\varepsilon \right\vert^2+\left\vert \partial_{x_2}  \varphi_\varepsilon (x)-\left(\partial_{x_2}\varphi_1 \right)\left(\frac{x_1}{\varepsilon}\right)\right\vert ^2\right)dx=0,
}\end{array}\end{equation}
\begin{equation}\label{F19,2019cr3}\begin{array}{lll} \displaystyle{\lim_{\varepsilon\rightarrow0}
\int_{\Omega^{c,3}_\varepsilon}\left( \left\vert  \varepsilon^2\partial_{x_1}  \varphi_\varepsilon \right\vert^2+\left\vert \partial_{x_2}  \varphi_\varepsilon (x)-\left(\partial_{x_2}\varphi_3 \right)\left(\frac{x_1}{\varepsilon}\right)\right\vert ^2\right)dx=0,
}\end{array}\end{equation}
and
\begin{equation}\label{F19,2019cr2}\begin{array}{lll} \displaystyle{\lim_{\varepsilon\rightarrow0}
\int_{\Omega^{c,2}_\varepsilon}\left( \left\vert  \varepsilon\partial_{x_1}  \varphi_\varepsilon -\left(\partial_y\varphi_2\right)\left(\frac{x_1}{\varepsilon}\right)\right\vert^2+\left\vert \varepsilon\partial_{x_2}  \varphi_\varepsilon (x)\right\vert ^2\right)dx=0.
}\end{array}\end{equation}
\end{Proposition}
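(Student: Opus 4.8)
The plan is to prove all three corrector statements simultaneously by assembling a single nonnegative functional and showing it vanishes in the limit. Concretely, I would set
$$I_\varepsilon = \int_{\Omega^{c,1}_\varepsilon}\Big(|\varepsilon^2\partial_{x_1}\varphi_\varepsilon|^2 + \big|\partial_{x_2}\varphi_\varepsilon - (\partial_{x_2}\varphi_1)(\tfrac{x_1}{\varepsilon})\big|^2\Big)dx + \int_{\Omega^{c,3}_\varepsilon}\Big(|\varepsilon^2\partial_{x_1}\varphi_\varepsilon|^2 + \big|\partial_{x_2}\varphi_\varepsilon - (\partial_{x_2}\varphi_3)(\tfrac{x_1}{\varepsilon})\big|^2\Big)dx + \int_{\Omega^{c,2}_\varepsilon}\Big(D_\varepsilon|\varepsilon\partial_{x_1}\varphi_\varepsilon - (\partial_y\varphi_2)(\tfrac{x_1}{\varepsilon})|^2 + D_\varepsilon^{-1}|\varepsilon\partial_{x_2}\varphi_\varepsilon|^2\Big)dx.$$
Since $D_\varepsilon>0$ we have $I_\varepsilon\geq 0$, and since $D_\varepsilon$ is bounded and bounded away from zero by \eqref{J15,2019}, each of \eqref{F19,2019cr1}, \eqref{F19,2019cr3}, \eqref{F19,2019cr2} will follow from $I_\varepsilon\to 0$ (for \eqref{F19,2019cr2}, after dividing out the bounds on $D_\varepsilon$).

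Expanding the squares I would write $I_\varepsilon = A_\varepsilon - 2B_\varepsilon + C_\varepsilon$, where $A_\varepsilon$ is precisely the energy appearing on the left of \eqref{F15,2019energyconv}, $B_\varepsilon$ collects the three cross terms (with the weight $D_\varepsilon$ on the $\Omega^{c,2}_\varepsilon$ contribution), and $C_\varepsilon$ collects the squared correctors. For $A_\varepsilon$ I invoke Proposition \ref{encon2019} directly. For $B_\varepsilon$, I observe that $(\partial_{x_2}\varphi_1)(\tfrac{x_1}{\varepsilon})=\chi_{\omega^a}(\tfrac{x_1}{\varepsilon})$, $(\partial_{x_2}\varphi_3)(\tfrac{x_1}{\varepsilon})=\chi_{\omega^b}(\tfrac{x_1}{\varepsilon})$ and $(\partial_y\varphi_2)(\tfrac{x_1}{\varepsilon})$ are admissible oscillating test functions, so the two-scale convergences of $\partial_{x_2}\widehat{\varphi_{\varepsilon,1}}$ and $\partial_{x_2}\widetilde{\varphi_{\varepsilon,3}}$ from Proposition \ref{Proposizione Eliquis1}, of $\varepsilon\partial_{x_1}\overline{\varphi_{\varepsilon,2}}$ from Proposition \ref{Proposizione Monda3}, together with $D_\varepsilon\to\tfrac{l_2-l_1}{l_2-l_1-2}$, yield
$$B_\varepsilon \to \int_{\Omega^{c,1}\times\omega^a}|\partial_{x_2}\varphi_1|^2\,dxdy + \int_{\Omega^{c,3}\times\omega^b}|\partial_{x_2}\varphi_3|^2\,dxdy + \tfrac{l_2-l_1}{l_2-l_1-2}\int_{\Omega^{c,2}\times(]0,1[\setminus(\omega^a\cup\omega^b))}|\partial_y\varphi_2|^2\,dxdy,$$
where I use $\partial_{x_2}\varphi_1=\chi_{\omega^a}$, $\partial_{x_2}\varphi_3=\chi_{\omega^b}$, and that $\partial_y\varphi_2$ is supported on $]0,1[\setminus(\omega^a\cup\omega^b)$.

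For $C_\varepsilon$ the integrands depend only on $\tfrac{x_1}{\varepsilon}$ and, layer by layer, are supported exactly on the columns belonging to the corresponding vacuum subdomain: the $\omega^a$-columns lie in $\Omega^{c,1}_\varepsilon$ (the rotor fingers do not reach below $l_1+1$), the $\omega^b$-columns lie in $\Omega^{c,3}_\varepsilon$ (the stator fingers do not rise above $l_2-1$), and $P=\,]0,1[\setminus(\omega^a\cup\omega^b)$ is the cross-section of $\Omega^{c,2}_\varepsilon$. Hence periodic averaging of $\chi_{\omega^a}^2$, $\chi_{\omega^b}^2$, and $|\partial_y\varphi_2|^2$ produces the same three integrals as in the limit of $B_\varepsilon$. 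Consequently $A_\varepsilon$, $B_\varepsilon$, $C_\varepsilon$ all converge to the common value equal to the right-hand side of \eqref{F15,2019energyconv}, so $I_\varepsilon\to A-2A+A=0$; being a sum of nonnegative terms, every summand must vanish, which is exactly the three asserted corrector limits.

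The one point that requires genuine care is the passage to the limit in $B_\varepsilon$: the correctors are only piecewise constant in the fast variable $y$, so testing the \emph{weak} two-scale convergences against them is not completely automatic. I would justify it either by approximating $\chi_{\omega^a}$, $\chi_{\omega^b}$ and $\partial_y\varphi_2$ in $L^2(]0,1[)$ by continuous $1$-periodic functions and using the uniform $L^2$-bounds on the (scaled) gradients from Corollary after \eqref{J18,2019estimatesalfa=2}, or by exploiting the fact that the relevant two-scale limits are themselves these piecewise-constant profiles. This is the main, though essentially routine, obstacle; the remainder is bookkeeping of which columns belong to which vacuum layer.
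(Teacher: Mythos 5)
Your proposal is correct and follows essentially the same route as the paper's proof: the paper expands the identical $D_\varepsilon$-weighted nonnegative functional into the energy term (handled by Proposition \ref{encon2019}), the cross terms (handled by the two-scale convergences of Propositions \ref{Proposizione Monda3} and \ref{Proposizione Eliquis1}, written there via the extensions $\widehat{\varphi_{\varepsilon,1}}$, $\widetilde{\varphi_{\varepsilon,3}}$, $\overline{\varphi_{\varepsilon,2}}$, whose relevant derivatives vanish on the extended parts, so that all integrals live on the fixed domains $\Omega^{c,1}$, $\Omega^{c,2}$, $\Omega^{c,3}$ --- the same support bookkeeping you do by hand), and the squared corrector terms (periodic averaging), finding the common limit and concluding by nonnegativity together with \eqref{J15,2019}. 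Your closing concern about testing two-scale convergence against the piecewise-constant profiles $\chi_{\omega^a}$, $\chi_{\omega^b}$, $\partial_y\varphi_2$ is legitimate but standard (products of a continuous function of $x$ with an $L^2$-periodic function of $y$ are admissible test functions), and the paper passes over it silently.
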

\begin{proof} One has
\begin{equation}\nonumber\begin{array}{lll} \displaystyle{
\int_{\Omega^{c,1}_\varepsilon}\left( \left\vert  \varepsilon^2\partial_{x_1}  \varphi_\varepsilon \right\vert^2+\left\vert \partial_{x_2}  \varphi_\varepsilon (x)-\left(\partial_{x_2}\varphi_1 \right)\left(\frac{x_1}{\varepsilon}\right)\right\vert ^2\right)dx
}\\\\ 
 \displaystyle{+
\int_{\Omega^{c,3}_\varepsilon}\left( \left\vert  \varepsilon^2\partial_{x_1}  \varphi_\varepsilon \right\vert^2+\left\vert \partial_{x_2}  \varphi_\varepsilon (x)-\left(\partial_{x_2}\varphi_3 \right)\left(\frac{x_1}{\varepsilon}\right)\right\vert ^2\right)dx
}\\\\
 \displaystyle{+
\int_{\Omega^{c,2}_\varepsilon}\left( D_\varepsilon\left\vert  \varepsilon\partial_{x_1}  \varphi_\varepsilon -\left(\partial_y\varphi_2\right)\left(\frac{x_1}{\varepsilon}\right)\right\vert^2+D_\varepsilon^{-1}\left\vert \varepsilon\partial_{x_2}  \varphi_\varepsilon (x)\right\vert ^2\right)dx=}
\end{array}\end{equation}
\begin{equation}\nonumber\begin{array}{lll} \displaystyle{\int_{\Omega^{c,1}_\varepsilon\cup \Omega^{c,3}_\varepsilon}\left( \left\vert  \varepsilon^2\partial_{x_1}  \varphi_\varepsilon \right\vert^2+\left\vert\partial_{x_2}  \varphi_\varepsilon \right\vert^2\right)dx+
\int_{\Omega^{c,2}_\varepsilon}\left(  D_\varepsilon  \left\vert  \varepsilon\partial_{x_1}  \varphi_\varepsilon \right\vert^2+D^{-1}_\varepsilon \left\vert  \varepsilon\partial_{x_2}  \varphi_\varepsilon \right\vert^2\right)dx}\\\\
\displaystyle{+\int_{\Omega^{c,1}}\left(\left\vert\left(\partial_{x_2}\varphi_1 \right)\left(\frac{x_1}{\varepsilon}\right)\right\vert ^2-2\partial_{x_2}  \widehat{\varphi_{\varepsilon,1}} (x)\left(\partial_{x_2}\varphi_1 \right)\left(\frac{x_1}{\varepsilon}\right)\right)dx}
\\\\
\displaystyle{+\int_{\Omega^{c,3}}\left(\left\vert\left(\partial_{x_2}\varphi_3 \right)\left(\frac{x_1}{\varepsilon}\right)\right\vert ^2-2\partial_{x_2}  \widetilde{\varphi_{\varepsilon,3}} (x)\left(\partial_{x_2}\varphi_3 \right)\left(\frac{x_1}{\varepsilon}\right)\right)dx}
\\\\
\displaystyle{+D_\varepsilon\int_{\Omega^{c,2}}\left(\left\vert\left(\partial_{y}\varphi_2 \right)\left(\frac{x_1}{\varepsilon}\right)\right\vert ^2-2\varepsilon\partial_{x_1}  \overline{\varphi_{\varepsilon,2}} (x)\left(\partial_{y}\varphi_2 \right)\left(\frac{x_1}{\varepsilon}\right)\right)dx,\quad\forall\varepsilon.}
\end{array}\end{equation}
where $ \widehat{\varphi_{\varepsilon,1}}$, $\widetilde{\varphi_{\varepsilon,3}}$, and  $\overline{\varphi_{\varepsilon,2}} $ are defined by    \eqref{Eliquis4}, \eqref{Eliquis2},  and  \eqref{F13,2019}, respectively. Passing to the limit, as $\varepsilon\rightarrow 0$, in this equality and using  Proposition \ref{Proposizione Monda3},   Proposition \ref{Proposizione Eliquis1}, Proposition \ref{encon2019}, and \eqref{J15,2019} provide
\begin{equation}\nonumber\begin{array}{lll} \displaystyle{\lim_{\varepsilon\rightarrow0}\bigg[
\int_{\Omega^{c,1}_\varepsilon}\left( \left\vert  \varepsilon^2\partial_{x_1}  \varphi_\varepsilon \right\vert^2+\left\vert \partial_{x_2}  \varphi_\varepsilon (x)-\left(\partial_{x_2}\varphi_1 \right)\left(\frac{x_1}{\varepsilon}\right)\right\vert ^2\right)dx
}\\\\ 
 \displaystyle{+
\int_{\Omega^{c,3}_\varepsilon}\left( \left\vert  \varepsilon^2\partial_{x_1}  \varphi_\varepsilon \right\vert^2+\left\vert \partial_{x_2}  \varphi_\varepsilon (x)-\left(\partial_{x_2}\varphi_3 \right)\left(\frac{x_1}{\varepsilon}\right)\right\vert ^2\right)dx
}\\\\
 \displaystyle{+
\int_{\Omega^{c,2}_\varepsilon}\left( D_\varepsilon\left\vert  \varepsilon\partial_{x_1}  \varphi_\varepsilon -\left(\partial_y\varphi_2\right)\left(\frac{x_1}{\varepsilon}\right)\right\vert^2+D_\varepsilon^{-1}\left\vert \varepsilon\partial_{x_2}  \varphi_\varepsilon (x)\right\vert ^2\right)dx\bigg]=0,
}\end{array}\end{equation}
which implies \eqref{F19,2019cr1} thanks to \eqref{J15,2019}.\end{proof}

\subsection{Proof of Theorem \ref{main theoremapril24,2019} with $\alpha=2$\label{proofmmmmaitttheore}}

\begin{proof} Proposition \ref{PropF25,2019}  with $\alpha=2$ provides that for every $\varepsilon$
\begin{equation}\label{J27,2019*}\begin{array}{ll}\displaystyle{
\int_{\Gamma^a_{\varepsilon,2}} |\nabla\varepsilon^2 \phi_{\varepsilon} |^{2}\nu_2 ds}\\\\ \displaystyle{=-\int_{\Omega^{c,1}_\varepsilon\cup \Omega^{c,3}_\varepsilon} \partial _{x_{2}}
\varphi^\star_{\varepsilon }\left \vert\varepsilon^2\partial_{x_1}  \varphi_\varepsilon\right\vert^2dx+\int_{\Omega^{c,1}_\varepsilon} \partial _{x_{2}}
\varphi^\star_{\varepsilon }\left\vert\partial_{x_2}  \varphi_\varepsilon\right\vert^2dx+\int_{\Omega^{c,3}_\varepsilon} \partial _{x_{2}}
\varphi^\star_{\varepsilon }\left\vert\partial_{x_2}  \varphi_\varepsilon\right\vert^2dx}\\\\
\displaystyle{+2\varepsilon^4
\int_{\Omega^{c,1}_\varepsilon\cup \Omega^{c,3}_\varepsilon}\partial_{x_2}
\varphi_{\varepsilon } \partial_{x_1}
\varphi^\star_{\varepsilon }\partial_{x_1}
\varphi_{\varepsilon }dx}
\\\\
\displaystyle{+\varepsilon^2
\int_{\Omega^{c,2}_\varepsilon} \left(-\partial _{x_{2}}
\varphi^\star_{\varepsilon }\left(\left \vert\varepsilon\partial_{x_1}  \varphi_\varepsilon\right\vert^2-\frac{1}{D_\varepsilon^2}\left\vert\varepsilon\partial_{x_2}  \varphi_\varepsilon\right\vert^2\right)+2\varepsilon\partial_{x_2}
\varphi_{\varepsilon }\partial_{x_1}
\varphi^\star_{\varepsilon }\varepsilon\partial_{x_1}
\varphi_{\varepsilon }\right)dx.}
\end{array}\end{equation}

As the first integral   in the right-hand side of \eqref{J27,2019*} is concerned,  \eqref{F16,2019z}-\eqref{F16,2019zq}, \eqref{F19,2019cr1}, and  \eqref{F19,2019cr3}  provide that
\begin{equation}\label{J26,2019I}\begin{array}{ll}\displaystyle{\left\vert \int_{\Omega^{c,1}_\varepsilon\cup \Omega^{c,3}_\varepsilon} \partial _{x_{2}}
\varphi^\star_{\varepsilon }\left \vert\varepsilon^2\partial_{x_1}  \varphi_\varepsilon\right\vert^2 dx\right\vert}\\\\ \displaystyle{\leq\Vert  \partial _{x_{2}}
\varphi^\star\Vert_{L^\infty([0,1] \times[l_1,l_2])}  \int_{\Omega^{c,1}_\varepsilon\cup \Omega^{c,3}_\varepsilon}\left \vert\varepsilon^2\partial_{x_1}  \varphi_\varepsilon\right\vert^2 dx\rightarrow 0, }
\end{array}\end{equation}
as $\varepsilon\rightarrow 0$.

As the second integral   in the right-hand side of \eqref{J27,2019*} is concerned, one has
\begin{equation}\label{J26,2019II}\begin{array}{ll}\displaystyle{
\int_{\Omega^{c,1}_\varepsilon} \partial _{x_{2}}
\varphi^\star_{\varepsilon }\left\vert\partial_{x_2}  \varphi_\varepsilon\right\vert^2dx=
\int_{\Omega^{c,1}_\varepsilon} \partial _{x_{2}}
\varphi^\star_{\varepsilon }\left\vert\partial_{x_2}  \varphi_\varepsilon-\left(\partial_{x_2}\varphi_1 \right)\left(\frac{x_1}{\varepsilon}\right)+\left(\partial_{x_2}\varphi_1 \right)\left(\frac{x_1}{\varepsilon}\right)\right\vert^2dx}\\\\
\displaystyle{= \int_{\Omega^{c,1}_\varepsilon} \partial _{x_{2}}
\varphi^\star_{\varepsilon }\left\vert\partial_{x_2}  \varphi_\varepsilon-\left(\partial_{x_2}\varphi_1 \right)\left(\frac{x_1}{\varepsilon}\right)\right\vert^2dx
+\int_{\Omega^{c,1}_\varepsilon} \partial _{x_{2}}
\varphi^\star_{\varepsilon }\left\vert\left(\partial_{x_2}\varphi_1 \right)\left(\frac{x_1}{\varepsilon}\right)\right\vert^2dx}\\\\
\displaystyle{+2\int_{\Omega^{c,1}_\varepsilon} \partial _{x_{2}}
\varphi^\star_{\varepsilon }\left(\partial_{x_2}  \varphi_\varepsilon-\left(\partial_{x_2}\varphi_1 \right)\left(\frac{x_1}{\varepsilon}\right)\right)\left(\partial_{x_2}\varphi_1 \right)\left(\frac{x_1}{\varepsilon}\right)dx,\quad\forall\varepsilon.
}
\end{array}\end{equation}
where $\varphi_1$ is defined in \eqref{F12,2019bis}. Moreover,
 \eqref{F16,2019z}-\eqref{F16,2019zq},  \eqref{F12,2019bis}, \eqref{F19,2019cr1}, and  \eqref{F19,2019cr3}  provide
\begin{equation}\label{J26,2019IIbis}\left\{\begin{array}{ll}\displaystyle{\left\vert  \int_{\Omega^{c,1}_\varepsilon} \partial _{x_{2}}
\varphi^\star_{\varepsilon }\left\vert\partial_{x_2}  \varphi_\varepsilon-\left(\partial_{x_2}\varphi_1 \right)\left(\frac{x_1}{\varepsilon}\right)\right\vert^2dx\right\vert
}\\\\
 \displaystyle{\leq\Vert  \partial _{x_{2}}
\varphi^\star\Vert_{L^\infty([0,1] \times[l_1,l_2])}   \int_{\Omega^{c,1}_\varepsilon} \left\vert\partial_{x_2}  \varphi_\varepsilon-\left(\partial_{x_2}\varphi_1 \right)\left(\frac{x_1}{\varepsilon}\right)\right\vert^2dx\rightarrow 0, }\\\\
\displaystyle{\int_{\Omega^{c,1}_\varepsilon} \partial _{x_{2}}
\varphi^\star_{\varepsilon }\left\vert\left(\partial_{x_2}\varphi_1 \right)\left(\frac{x_1}{\varepsilon}\right)\right\vert^2dx=\int_{\Omega^{c,1}} \partial _{x_{2}}
\varphi^\star\left(\frac{x_1}{\varepsilon},x_2\right)\left\vert\left(\partial_{x_2}\varphi_1 \right)\left(\frac{x_1}{\varepsilon}\right)\right\vert^2dx}\\\\
\displaystyle{\rightarrow \int_{\Omega^{c,1}\times\omega^a} \partial _{x_{2}}
\varphi^\star\left(y,x_2\right)dxdy=\hbox{meas}(\omega^a) L,}\\\\
\displaystyle{2\left\vert\int_{\Omega^{c,1}_\varepsilon} \partial _{x_{2}}
\varphi^\star_{\varepsilon }\left(\partial_{x_2}  \varphi_\varepsilon-\left(\partial_{x_2}\varphi_1 \right)\left(\frac{x_1}{\varepsilon}\right)\right)\left(\partial_{x_2}\varphi_1 \right)\left(\frac{x_1}{\varepsilon}\right)dx\right\vert}
\\\\
 \displaystyle{\leq2\Vert  \partial _{x_{2}}
\varphi^\star\Vert_{L^\infty([0,1] \times[l_1,l_2])} \Vert  \partial _{x_{2}}
\varphi_1\Vert_{L^\infty([0,1] )}   \int_{\Omega^{c,1}_\varepsilon} \left\vert\partial_{x_2}  \varphi_\varepsilon-\left(\partial_{x_2}\varphi_1 \right)\left(\frac{x_1}{\varepsilon}\right)\right\vert dx\rightarrow 0, }
\end{array}\right.\end{equation}
 as $\varepsilon\rightarrow 0$. Then, combining \eqref{J26,2019II} and \eqref{J26,2019IIbis} gives
 \begin{equation}\label{J27,2019+}\begin{array}{ll}\displaystyle{\lim_{\varepsilon\rightarrow 0}
\int_{\Omega^{c,1}_\varepsilon} \partial _{x_{2}}
\varphi^\star_{\varepsilon }\left\vert\partial_{x_2}  \varphi_\varepsilon\right\vert^2dx=
\hbox{meas}(\omega^a)L. }
\end{array}\end{equation}
Similarly, one proves that
 \begin{equation}\label{J27,2019+-}\begin{array}{ll}\displaystyle{\lim_{\varepsilon\rightarrow 0}
\int_{\Omega^{c,3}_\varepsilon} \partial _{x_{2}}
\varphi^\star_{\varepsilon }\left\vert\partial_{x_2}  \varphi_\varepsilon\right\vert^2dx=
\hbox{meas}(\omega^b)L. }
\end{array}\end{equation}

As the fourth integral   in the right-hand side of \eqref{J27,2019*} is concerned, \eqref{F16,2019z}-\eqref{F16,2019zq},   and the first two estimates in  \eqref{J18,2019estimatesalfa=2}  provide
\begin{equation}\label{J27,2019IIbiscx}\begin{array}{ll}
\displaystyle{\left\vert
2\varepsilon^4
\int_{\Omega^{c,1}_\varepsilon\cup \Omega^{c,3}_\varepsilon}\partial_{x_2}
\varphi_{\varepsilon } \partial_{x_1}
\varphi^\star_{\varepsilon }\partial_{x_1}
\varphi_{\varepsilon }dx\right\vert
}\\\\
\leq
2\varepsilon\Vert  \partial _{x_{1}}
\varphi^\star\Vert_{L^\infty([0,1] \times[l_1,l_2])} 
\Vert \varepsilon^2 \partial _{x_{1}}
\varphi_\varepsilon\Vert_{L^2(\Omega^{c,1}_\varepsilon\cup \Omega^{c,3}_\varepsilon)}
\Vert \partial _{x_{2}}
\varphi_\varepsilon\Vert_{L^2(\Omega^{c,1}_\varepsilon\cup \Omega^{c,3}_\varepsilon)}\rightarrow0,
\end{array}\end{equation}
as $\varepsilon\rightarrow 0$.

As the last integral   in the right-hand side of \eqref{J27,2019*} is concerned, \eqref{J15,2019},  \eqref{F16,2019z}-\eqref{F16,2019zq},  and the last estimate in  \eqref{J18,2019estimatesalfa=2}  provide
\begin{equation}\label{J27,2019IIbiscxfhf}\begin{array}{ll}
\displaystyle{\left\vert \varepsilon^2
\int_{\Omega^{c,2}_\varepsilon} \left(-\partial _{x_{2}}
\varphi^\star_{\varepsilon }\left(\left \vert\varepsilon\partial_{x_1}  \varphi_\varepsilon\right\vert^2-\frac{1}{D_\varepsilon^2}\left\vert\varepsilon\partial_{x_2}  \varphi_\varepsilon\right\vert^2\right)+2\varepsilon\partial_{x_2}
\varphi_{\varepsilon }\partial_{x_1}
\varphi^\star_{\varepsilon }\varepsilon\partial_{x_1}
\varphi_{\varepsilon }\right)dx\right\vert}\\\\
\displaystyle{\leq\bigg[\varepsilon^2\Vert  \partial _{x_{2}}
\varphi^\star\Vert_{L^\infty([0,1] \times[l_1,l_2])} \int_{\Omega^{c,2}_\varepsilon} \left(\left \vert\varepsilon\partial_{x_1}  \varphi_\varepsilon\right\vert^2+\frac{1}{D_\varepsilon^2}\left\vert\varepsilon\partial_{x_2}  \varphi_\varepsilon\right\vert^2\right)dx
}\\\\
\displaystyle{+2\varepsilon \Vert  \partial _{x_{1}}
\varphi^\star\Vert_{L^\infty([0,1] \times[l_1,l_2])}
\Vert \varepsilon \partial _{x_{1}}
\varphi_\varepsilon\Vert_{L^2(\Omega^{c,2}_\varepsilon)}
\Vert \varepsilon\partial _{x_{2}}
\varphi_\varepsilon\Vert_{L^2(\Omega^{c,2}_\varepsilon)}\bigg]\rightarrow0,
}
\end{array}\end{equation}
as $\varepsilon\rightarrow 0$.

Finally, passing to the limit, as $\varepsilon$ tends to zero, in \eqref{J27,2019*} and using \eqref{J26,2019I}, \eqref{J27,2019+}, \eqref{J27,2019+-}, \eqref{J27,2019IIbiscx}, and \eqref{J27,2019IIbiscxfhf} give \eqref{F25,2019has} when $\alpha=2$.
\end{proof}

\section{The case $\alpha>2$\label{sketched}}
In the case $\alpha>2$, the proof of Theorem \ref{main theoremapril24,2019}  will be just sketched.
\subsection{{\it A priori} estimates }
Proposition \ref{J18,2019Proposition} immediately implies the following result.
\begin{Corollary} For every $\varepsilon$, let $\varphi_\varepsilon$ be the unique solution to \eqref{J13,2019weakrescaled} with $\alpha>2$. Then,
\begin{equation}\label{J18,2019estimatesalfa=2Aprile 62019}\exists c\in]0,+\infty[\quad:\quad\left\{\begin{array}{lll}\displaystyle{ \Vert\varepsilon^\alpha\partial_{x_1}  \varphi_\varepsilon\Vert_{L^2(\Omega^{c,1}_\varepsilon\cup \Omega^{c,3}_\varepsilon)}\leq  c,}\\\\
\displaystyle{  \Vert\partial_{x_2}  \varphi_\varepsilon \Vert_{L^2(\Omega^{c,1}_\varepsilon\cup \Omega^{c,3}_\varepsilon)}\leq  c, 
}\\\\ \displaystyle{ \Vert\varepsilon^{\frac{\alpha}{2}}\nabla  \varphi_\varepsilon \Vert_{L^2(\Omega^{c,2}_\varepsilon)}\leq  c,}
\end{array}\right.\quad\forall\varepsilon.\end{equation}
\end{Corollary}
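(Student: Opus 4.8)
The plan is to obtain the three claimed bounds directly from the three inequalities of Proposition~\ref{J18,2019Proposition}, which hold for every $\alpha\in[0,+\infty[$, by multiplying each of them by the suitable power of $\varepsilon$ and then using that, for $\alpha>2$ and $\varepsilon\in]0,1[$, the exponent $\alpha-2$ is positive, so that $\varepsilon^{\alpha-2}\le 1$.

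First I would take the first estimate in \eqref{J18,2019estimates}, i.e. $\int_{\Omega^{c,1}_\varepsilon\cup\Omega^{c,3}_\varepsilon}\vert\partial_{x_1}\varphi_\varepsilon\vert^2\,dx\le c(\varepsilon^{-2-\alpha}+\varepsilon^{-2\alpha})$, and multiply both sides by $\varepsilon^{2\alpha}$. This yields $\Vert\varepsilon^\alpha\partial_{x_1}\varphi_\varepsilon\Vert^2_{L^2(\Omega^{c,1}_\varepsilon\cup\Omega^{c,3}_\varepsilon)}\le c(\varepsilon^{\alpha-2}+1)$, and since $\alpha-2>0$ and $\varepsilon\le 1$ the right-hand side is at most $2c$, which gives the first bound. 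The second estimate in \eqref{J18,2019estimates} is already in the required shape: $\Vert\partial_{x_2}\varphi_\varepsilon\Vert^2_{L^2(\Omega^{c,1}_\varepsilon\cup\Omega^{c,3}_\varepsilon)}\le c(\varepsilon^{\alpha-2}+1)\le 2c$ by the same observation, which is the second bound. Finally I would multiply the third estimate $\int_{\Omega^{c,2}_\varepsilon}\vert\nabla\varphi_\varepsilon\vert^2\,dx\le c(\varepsilon^{-2}+\varepsilon^{-\alpha})$ by $\varepsilon^{\alpha}$, obtaining $\Vert\varepsilon^{\frac{\alpha}{2}}\nabla\varphi_\varepsilon\Vert^2_{L^2(\Omega^{c,2}_\varepsilon)}\le c(\varepsilon^{\alpha-2}+1)\le 2c$, which is the third bound.

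Since each of the three inequalities is obtained term by term from Proposition~\ref{J18,2019Proposition} by a single scalar multiplication, there is no substantive obstacle here; the only point to watch is the bookkeeping of the powers of $\varepsilon$ and the systematic use of $\alpha-2>0$ together with $\varepsilon\le 1$. I note that the borderline value $\alpha=2$ is covered by the very same computation, since there $\varepsilon^{\alpha-2}=1$ is still bounded, so that this argument in fact reproduces the corresponding $\alpha=2$ corollary and treats the whole range $\alpha\ge 2$ uniformly.
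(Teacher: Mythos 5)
Your proposal is correct and is exactly the argument the paper intends: the paper states that this Corollary follows ``immediately'' from Proposition~\ref{J18,2019Proposition} without writing anything out, and your computation (multiplying the first and third estimates of \eqref{J18,2019estimates} by $\varepsilon^{2\alpha}$ and $\varepsilon^{\alpha}$ respectively, and bounding $\varepsilon^{\alpha-2}$ by a constant since $\alpha-2>0$) is precisely that immediate deduction. Your closing remark that the same bookkeeping covers $\alpha=2$ is also consistent with the paper, which states the analogous corollary for $\alpha=2$ by the same appeal to Proposition~\ref{J18,2019Proposition}.
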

This result provides the   following {\it a priori} estimate.
\begin{Proposition} \label{J21,2019prop6Aprile2019} For every $\varepsilon$, let $\varphi_\varepsilon$ be the unique solution to \eqref{J13,2019weakrescaled} with $\alpha>2$. Then,
\begin{equation}\label{J18,2019las6Aprile2019t}\exists c\in]0,+\infty[\quad:\quad\left\{\begin{array}{lll}\displaystyle{ \Vert \varphi_\varepsilon  \Vert_{L^2(\Omega^{c,1}_\varepsilon\cup \Omega^{c,3}_\varepsilon)}\leq c, }\\\\ \left\Vert \varepsilon^{\frac{\alpha-2}{2}} \varphi_\varepsilon  \right\Vert_{L^2(\Omega^{c,2}_\varepsilon)}\leq c,
\end{array}\right.\quad\forall\varepsilon.\end{equation}
\end{Proposition}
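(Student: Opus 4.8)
The plan is to follow, almost verbatim, the structure of the proof of Proposition \ref{J21,2019prop}, replacing the estimates of \eqref{J18,2019estimatesalfa=2} by those of \eqref{J18,2019estimatesalfa=2Aprile 62019} and inserting the weight $\varepsilon^{\frac{\alpha-2}{2}}$ only in the middle region, where it is needed to absorb the stronger blow-up of the horizontal gradient.

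For the first estimate I would argue exactly as in the case $\alpha=2$. On $\Omega^{c,1}_\varepsilon$ the function $\varphi_\varepsilon$ vanishes on the bottom face $]0,L[\times\{l_1\}$ (the stator condition on $\Gamma^b_\varepsilon$), while on $\Omega^{c,3}_\varepsilon$ it equals $1$ on the top face $]0,L[\times\{l_2\}$ (the rotor condition on $\Gamma^a_\varepsilon$). Writing $\varphi_\varepsilon(x_1,\cdot)$ (resp. $\varphi_\varepsilon(x_1,\cdot)-1$) as the integral of $\partial_{x_2}\varphi_\varepsilon$ starting from $l_1$ (resp. ending at $l_2$), and using that both strips have fixed height $1$, a one-dimensional Poincar\'e inequality in $x_2$ gives a bound of $\|\varphi_\varepsilon\|_{L^2}$ by $\|\partial_{x_2}\varphi_\varepsilon\|_{L^2}$ up to an $O(1)$ constant. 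Since the second estimate in \eqref{J18,2019estimatesalfa=2Aprile 62019} is word for word the same as in the case $\alpha=2$, the bound $\|\varphi_\varepsilon\|_{L^2(\Omega^{c,1}_\varepsilon\cup\Omega^{c,3}_\varepsilon)}\leq c$ follows with no change.

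For the second estimate I would reuse the column-by-column computation \eqref{J19,2019}--\eqref{J19,2019III}. Fixing $k$ and slicing $\varepsilon P+\varepsilon k$ into the three gaps, the lateral Dirichlet data together with a Poincar\'e inequality in $x_1$ over intervals of length $O(\varepsilon)$ yield, exactly as before,
\begin{equation}\nonumber
\|\varphi_\varepsilon\|^2_{L^2(\Omega^{c,2}_\varepsilon)}\leq 2L(l_2-l_1)+2\varepsilon^2\|\partial_{x_1}\varphi_\varepsilon\|^2_{L^2(\Omega^{c,2}_\varepsilon)}.
\end{equation}
The only difference from the case $\alpha=2$ is that now the third estimate in \eqref{J18,2019estimatesalfa=2Aprile 62019} gives $\|\partial_{x_1}\varphi_\varepsilon\|^2_{L^2(\Omega^{c,2}_\varepsilon)}\leq c^2\varepsilon^{-\alpha}$, so the gradient term is only $O(\varepsilon^{2-\alpha})$, which diverges for $\alpha>2$. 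Multiplying the displayed inequality by $\varepsilon^{\alpha-2}$ and using $\varepsilon\leq1$ I would obtain
\begin{equation}\nonumber
\left\|\varepsilon^{\frac{\alpha-2}{2}}\varphi_\varepsilon\right\|^2_{L^2(\Omega^{c,2}_\varepsilon)}\leq 2L(l_2-l_1)\,\varepsilon^{\alpha-2}+2c^2\leq 2L(l_2-l_1)+2c^2,
\end{equation}
which is the claimed bound.

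The argument is essentially routine, so there is no serious obstacle; the one point requiring care is the exponent bookkeeping. The weight $\varepsilon^{\frac{\alpha-2}{2}}$ is exactly calibrated so that $\varepsilon^2\|\partial_{x_1}\varphi_\varepsilon\|^2_{L^2}$, which scales like $\varepsilon^{2-\alpha}$ under the third estimate, is turned into the bounded quantity $\varepsilon^{\alpha}\|\partial_{x_1}\varphi_\varepsilon\|^2_{L^2}\leq c^2$; one should also check that the $O(1)$ constant coming from the unit Dirichlet value on the rotor fingers is harmless, which it is since after weighting it becomes $O(\varepsilon^{\alpha-2})$. No weight is needed in the first estimate precisely because there the relevant bound on $\partial_{x_2}\varphi_\varepsilon$ is already $O(1)$ and the Poincar\'e inequality is taken across a strip of fixed height.
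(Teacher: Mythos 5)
Your proposal is correct and takes essentially the same route as the paper: the first bound follows from the vertical Poincar\'e inequality supplied by the Dirichlet data on the bottom of $\Omega^{c,1}_\varepsilon$ and the top of $\Omega^{c,3}_\varepsilon$ together with the second estimate in \eqref{J18,2019estimatesalfa=2Aprile 62019}, and the second bound comes from repeating the column-by-column argument of Proposition \ref{J21,2019prop}, weighting by $\varepsilon^{\alpha-2}$, and invoking the third estimate in \eqref{J18,2019estimatesalfa=2Aprile 62019}. This is precisely the paper's proof, which simply cites the argument of Proposition \ref{J21,2019prop} and records the resulting weighted inequality \eqref{J19,2019Aprile62019}; your write-up merely spells out the details (and is slightly more careful with the constant coming from the sum over the $L/\varepsilon$ periodicity cells).
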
 
\begin{proof} The Dirichlet boundary condition of $\varphi_\varepsilon $ on $\Gamma_\varepsilon$ and the second estimate in \eqref{J18,2019estimatesalfa=2Aprile 62019} provide  the first estimate in \eqref{J18,2019las6Aprile2019t}.

Arguing as in the proof of Proposition \ref{J21,2019prop} gives
\begin{equation}\label{J19,2019Aprile62019}
\Vert \varepsilon^{\frac{\alpha-2}{2}}  \varphi_\varepsilon  \Vert^2_{L^2(\Omega^{c,2}_\varepsilon)}\leq 
2(l_2-l_1)\varepsilon^{\alpha-2} +2
\left \Vert \varepsilon^{\frac{\alpha}{2}} \partial_{x_1}\varphi_\varepsilon \right\Vert^2_{L^2(\Omega^{c,2}_\varepsilon)},\quad\forall\varepsilon,
\end{equation}
which implies the second estimate in \eqref{J18,2019las6Aprile2019t}, thanks to the third estimate in \eqref{J18,2019estimatesalfa=2Aprile 62019}.
\end{proof}

\subsection{Weak convergence results}
The next proposition is devoted to  studying the limit  in $\Omega^{c,2}$, as $\varepsilon$ tends to zero,  of problem \eqref{J13,2019weakrescaled} with $\alpha>2$.
\begin{Proposition} \label{Proposizione Monda37Aprile2019}For every $\varepsilon$, let $\varphi_\varepsilon$ be the unique solution to \eqref{J13,2019weakrescaled} with $\alpha>2$ and  let  $\overline{\varphi_{\varepsilon,2}}$,
 be defined by \eqref{F13,2019}.
Then, 
\begin{equation}\label{Monda1bisterforse7Aprile2019}\left\{\begin{array}{ll}\varepsilon^{\frac{\alpha-2}{2}}\overline{\varphi_{\varepsilon,2}}\hbox{ two scale converges to } 0,\\\\
\varepsilon^{\frac{\alpha}{2}}\partial_{x_1}\overline{\varphi_{\varepsilon,2}}\hbox{ two scale converges to }0,\\\\
\varepsilon^{\frac{\alpha}{2}}\partial_{x_2}\overline{\varphi_{\varepsilon,2}}\hbox{ two scale converges to }0,\end{array}\right.\end{equation}
as $\varepsilon$ tends to zero.
\end{Proposition}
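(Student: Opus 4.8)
The plan is to reproduce the architecture of the proof of Proposition \ref{Proposizione Monda3}, but applied to the rescaled unknown $w_\varepsilon:=\varepsilon^{\frac{\alpha-2}{2}}\overline{\varphi_{\varepsilon,2}}$, and to exploit the fact that the extra prefactor $\varepsilon^{\frac{\alpha-2}{2}}$, which now tends to $0$ precisely because $\alpha>2$, collapses the whole two-scale limit to $0$. First I would set up two-scale compactness. Since $\overline{\varphi_{\varepsilon,2}}$ coincides with $\varphi_\varepsilon$ on $\Omega^{c,2}_\varepsilon$ and equals the constants $1$ and $0$ on the rotor and stator cells, the second estimate in \eqref{J18,2019las6Aprile2019t} together with $\hbox{meas}(\Omega^{c,2})<+\infty$ shows that $w_\varepsilon$ is bounded in $L^2(\Omega^{c,2})$, while $\varepsilon\nabla w_\varepsilon=\varepsilon^{\frac{\alpha}{2}}\nabla\overline{\varphi_{\varepsilon,2}}$ is bounded in $L^2(\Omega^{c,2})$ by the third estimate in \eqref{J18,2019estimatesalfa=2Aprile 62019} (the gradient vanishing on the finger cells). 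As in \eqref{Monda1bister}, this provides, along a subsequence, a limit $u\in L^2\left(\Omega^{c,2},H^1_{\hbox{per}}(]0,1[)\right)$ such that $w_\varepsilon$ two-scale converges to $u$, while $\varepsilon^{\frac{\alpha}{2}}\partial_{x_1}\overline{\varphi_{\varepsilon,2}}$ two-scale converges to $\partial_y u$ and $\varepsilon^{\frac{\alpha}{2}}\partial_{x_2}\overline{\varphi_{\varepsilon,2}}$ two-scale converges to $0$. The last limit already yields the third line of \eqref{Monda1bisterforse7Aprile2019}; what remains is to prove that $u\equiv0$.

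Next I would identify the "cell data" carried by $u$. Arguing as for \eqref{J23,2019fgd} but retaining the prefactor, on the rotor cells one has $w_\varepsilon=\varepsilon^{\frac{\alpha-2}{2}}$, so testing against $\psi\left(x_1,x_2,\frac{x_1}{\varepsilon}\right)$ with $\psi\in C_0^\infty(\Omega^{c,2}\times\omega^a)$ and letting $\varepsilon\to0$ gives $\int u\,\psi=\lim_{\varepsilon\to0}\varepsilon^{\frac{\alpha-2}{2}}\int\psi\left(x_1,x_2,\frac{x_1}{\varepsilon}\right)dx=0$, whence $u=0$ a.e.\ in $\Omega^{c,2}\times\omega^a$. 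On the stator cells $w_\varepsilon=0$ identically, so $u=0$ a.e.\ in $\Omega^{c,2}\times\omega^b$. This is exactly the point that distinguishes the present case from $\alpha=2$: there the rotor cells carried the value $1$ and forced the nontrivial profile $\varphi_2$ of \eqref{J24,2019all}, whereas here the vanishing factor $\varepsilon^{\frac{\alpha-2}{2}}$ annihilates that contribution.

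Then I would derive the cell equation on $P=]0,1[\setminus\left(\overline{\omega^a}\cup\overline{\omega^b}\right)$. Taking $\psi=\varepsilon^{1+\frac{\alpha}{2}}\chi_1(x_1,x_2)\chi_2\left(\frac{x_1}{\varepsilon}\right)$ with $\chi_1\in C_0^\infty(\Omega^{c,2})$ and $\chi_2\in H^1_{\hbox{per}}(]0,1[)$ vanishing on $\omega^a\cup\omega^b$ as test function in \eqref{J13,2019weakrescaled} (admissible, as in \eqref{J23,2019seraIIJ24}, since $\psi$ then vanishes on $\Gamma_\varepsilon$ and extends by $0$ into the fingers, so only the $\Omega^{c,2}_\varepsilon$ integral survives), the choice of exponent $1+\frac{\alpha}{2}$ is dictated by the requirement that the leading contribution be finite: the dominant term is $D_\varepsilon\int\left(\varepsilon^{\frac{\alpha}{2}}\partial_{x_1}\overline{\varphi_{\varepsilon,2}}\right)\chi_1\,\partial_y\chi_2$, whereas all remaining terms retain an extra power of $\varepsilon$ and vanish by the boundedness of $\varepsilon^{\frac{\alpha}{2}}\nabla\overline{\varphi_{\varepsilon,2}}$. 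Passing to the limit using \eqref{J15,2019} and the convergence of $\varepsilon^{\frac{\alpha}{2}}\partial_{x_1}\overline{\varphi_{\varepsilon,2}}$ to $\partial_y u$ produces $\int_{P}\partial_y u\,\partial_y\chi_2\,dy=0$ for a.e.\ $(x_1,x_2)$ and all such $\chi_2$, that is $\partial^2_{y^2}u=0$ on $P$.

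Finally, $u(x_1,x_2,\cdot)$ solves the one-dimensional cell problem $\partial^2_{y^2}u=0$ on $P$, $u=0$ on $\omega^a\cup\omega^b$, with periodic conditions on $]0,1[$; being affine on each component of $P$ and vanishing (by continuity of the $H^1_{\hbox{per}}$ trace) at the two endpoints of each component, $u\equiv0$, and consequently $\partial_y u=0$ as well. The three two-scale limits are thus uniquely determined and equal to $0$, so the extraction of a subsequence is unnecessary and \eqref{Monda1bisterforse7Aprile2019} holds for the whole sequence. The only genuinely delicate points are the bookkeeping of the powers of $\varepsilon$ fixing the test-function scaling $1+\frac{\alpha}{2}$, and the verification that $\overline{\varphi_{\varepsilon,2}}$ is truly an $H^1$ function across the finger interfaces, which follows, exactly as in Proposition \ref{Proposizione Monda3}, from the Dirichlet conditions $\varphi_\varepsilon=1$ on $\Gamma^a_\varepsilon$ and $\varphi_\varepsilon=0$ on $\Gamma^b_\varepsilon$ matching the extension values in \eqref{F13,2019}.
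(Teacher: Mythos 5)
Your proof is correct and follows essentially the same route as the paper: the same \emph{a priori} estimates give two-scale compactness, the prefactor $\varepsilon^{\frac{\alpha-2}{2}}\to 0$ kills the rotor-cell value so the limit vanishes on $\omega^a\cup\omega^b$, the test functions $\varepsilon^{\frac{\alpha}{2}+1}\chi_1\chi_2$ yield the same cell equation, and the resulting periodic cell problem has only the trivial solution, so the whole sequence converges. The only differences are notational (introducing $w_\varepsilon$) and that you spell out details the paper compresses into ``arguing as in the proof of Proposition \ref{Proposizione Monda3}.''
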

\begin{proof}
The second estimate in  \eqref{J18,2019las6Aprile2019t} and the third estimate in \eqref{J18,2019estimatesalfa=2Aprile 62019} ensure the existence of a subsequence of $\{\varepsilon\}$, still denoted by $\{\varepsilon\}$,  and $u_2 \in L^2\left(\Omega^{c,2}, H^1_{\hbox{per}}(]0,1[)\right)$   (in possible dependence on the subsequence) such that
 \begin{equation}\label{Monda1bister7Aprile2019}\left\{\begin{array}{ll}\varepsilon^{\frac{\alpha-2}{2}}\overline{\varphi_{\varepsilon,2}}\hbox{ two scale converges to } u_2,\\\\
\varepsilon^{\frac{\alpha}{2}}\partial_{x_1}\overline{\varphi_{\varepsilon,2}}\hbox{ two scale converges to }\partial_{y} u_2,\\\\
\varepsilon^{\frac{\alpha}{2}}\partial_{x_2}\overline{\varphi_{\varepsilon,2}}\hbox{ two scale converges to }0,\end{array}\right.\end{equation}
as $\varepsilon$ tends to zero.

Arguing as in the proof of Proposition \ref{Proposizione Monda3}, one obtains 
\begin{equation}\label{J23,2019fgd7Aprile2019} u_2=0, \hbox { a.e. in } \Omega^{c,2}\times\left(\omega^a\cup\omega^b\right).\end{equation}

Passing to the limit, as $\varepsilon$ tends to zero, in \eqref{J13,2019weakrescaled} with $\alpha>2$ and with test functions
 $\displaystyle{\psi=\varepsilon^{\frac{\alpha}{2}+1}\chi_1(x_1,x_2)\chi_2\left(\frac{x_1}{\varepsilon}\right)}$, where $\chi_1\in C_0^\infty\left(\Omega^{c,2}\right)$ and $\chi_2\in H^1_{\hbox{per}}\left(]0,1[\right)$ such that $\chi_2=0$ in $\omega^a\cup\omega^b$, and using  \eqref{J15,2019} and  the second and third limits in   \eqref{Monda1bister7Aprile2019} provide
 that, for a.e. $(x_1,x_2)$ in $ \Omega^{c,2}$,
\begin{equation}\label{J23,2019seraIJ247Aprile2019}\begin{array}{ll}\displaystyle{ \int_{]0,1[\setminus\left(\omega^a\cup\omega^b\right)}\partial_{y}u_2(x_1,x_2,y)\partial_{y}\chi_2(y)dy=0, }\\\\  \forall \chi_2\in H^1_{\hbox{per}}\left(]0,1[\right)\,\,:\,\, \chi_2=0,\hbox{ in } \omega^a\cup\omega^b.
\end{array}
\end{equation}

Problem \eqref{J23,2019fgd7Aprile2019} and \eqref{J23,2019seraIJ247Aprile2019} is equivalent to the following problem independent of $(x_1,x_2)$
\begin{equation}\left\{\begin{array}{ll}\partial^2_{y^2}u_2=0,\hbox{ in }]0,1[\setminus\left(\omega^a\cup\omega^b\right),\\\\
u_2=0, \hbox {  in } \omega^a\cup\omega^b,\\\\
  u_2(0)=u_2(1),\\\\
   \partial_y u_2(0)=\partial_yu_2(1),
\end{array}\right.\end{equation}
which admits  $u_2=0$ as unique solution.
Consequently, limits in  \eqref{Monda1bister7Aprile2019} hold for the whole sequence and \eqref{Monda1bisterforse7Aprile2019} is satisfied.
\end{proof}

The next proposition is devoted to  studying the limit  in $\Omega^{c,3}$ and in $\Omega^{c,1}$, as $\varepsilon$ tends to zero,  of problem \eqref{J13,2019weakrescaled} with $\alpha>2$.

\begin{Proposition} \label{Proposizione Eliquis18Aprile2019}
For every $\varepsilon$, let $\varphi_\varepsilon$ be the unique solution to \eqref{J13,2019weakrescaled} with $\alpha>2$  and let  
$\widetilde{\varphi_{\varepsilon,3}}$ and $\widehat{\varphi_{\varepsilon,1}}$ be defined by \eqref{Eliquis2} and  \eqref{Eliquis4}, respectively. Moreover,  let   $\varphi_3$ and $\varphi_1$ be defined by  \eqref{F12,2019},and \eqref{F12,2019bis}, respectively. Then,
\begin{equation}\label{Monda1bisterforseter8Aprile2019}\left\{\begin{array}{ll}\widetilde{\varphi_{\varepsilon,3}}\hbox{ two scale converges to } \varphi_3,\\\\
\partial_{x_2}\widetilde{\varphi_{\varepsilon,3}}\hbox{ two scale converges to }\partial_{x_2} \varphi_3,\end{array}\right.\end{equation}
and
\begin{equation}\label{Monda1bisterforsequater8Aprile2019}\left\{\begin{array}{ll}\widehat{\varphi_{\varepsilon,1}}\hbox{ two scale converges to } \varphi_1,\\\\
\partial_{x_2}\widehat{\varphi_{\varepsilon,1}}\hbox{ two scale converges to }\partial_{x_2} \varphi_1,\end{array}\right.\end{equation}
as $\varepsilon$ tends to zero.
\end{Proposition}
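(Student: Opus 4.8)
The plan is to repeat, almost verbatim, the argument of Proposition \ref{Proposizione Eliquis1}, the only genuinely new ingredients being the different scaling of the test functions and the observation that the coupling through $\Omega^{c,2}$ now disappears. First I would note that the bounds on $\widetilde{\varphi_{\varepsilon,3}}$, $\widehat{\varphi_{\varepsilon,1}}$ and on their $x_2$-derivatives provided by the first estimate in \eqref{J18,2019las6Aprile2019t} and the second estimate in \eqref{J18,2019estimatesalfa=2Aprile 62019} are exactly the same as in the case $\alpha=2$. Hence, along a subsequence, $\widetilde{\varphi_{\varepsilon,3}}$ and $\widehat{\varphi_{\varepsilon,1}}$ two-scale converge to limits $u_3\in L^2(\Omega^{c,3}\times]0,1[)$ and $u_1\in L^2(\Omega^{c,1}\times]0,1[)$ whose $x_2$-derivatives exist and are the two-scale limits of $\partial_{x_2}\widetilde{\varphi_{\varepsilon,3}}$ and $\partial_{x_2}\widehat{\varphi_{\varepsilon,1}}$, by the same integration-by-parts identity used for \eqref{J21,2019II}.

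Next I would recover the trace conditions and the boundary data for $u_3$ and $u_1$ exactly as in the second and third steps of Proposition \ref{Proposizione Eliquis1}: the trace identifications $w=u_3(\cdot,l_2-1,\cdot)$, $z=u_3(\cdot,l_2,\cdot)$, then $u_3(x_1,l_2,y)=1$, $u_3(x_1,l_2-1,y)=0$ on $]0,L[\times\omega^b$, and $u_3=1$ on $\Omega^{c,3}\times\omega^a$, together with the symmetric statements for $u_1$. None of these steps sees the exponent $\alpha$, since they use only the Dirichlet data on $\Gamma^a_\varepsilon$, $\Gamma^b_\varepsilon$ and the uniform control of $\partial_{x_2}$.

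The only genuinely new computation is the cell equation, and this is where I expect the main obstacle to lie. I would test \eqref{J13,2019weakrescaled} with $\psi=\varepsilon^\alpha\chi(x_1,x_2,x_1/\varepsilon)$ in place of the choice $\psi=\varepsilon^2\chi(\cdots)$ used for $\alpha=2$. On $\Omega^{c,1}\cup\Omega^{c,3}$ the $\partial_{x_1}$-contributions then carry the factors $\varepsilon^{2\alpha}$ and $\varepsilon^{2\alpha-1}$ in front of $\partial_{x_1}\varphi_\varepsilon$; writing these as $\varepsilon^{\alpha}$ and $\varepsilon^{\alpha-1}$ times the bounded quantity $\varepsilon^{\alpha}\partial_{x_1}\varphi_\varepsilon$ (first estimate in \eqref{J18,2019estimatesalfa=2Aprile 62019}), they vanish because $\alpha>2$, while the $\partial_{x_2}$-contribution reduces cleanly to $\partial_{x_2}\widetilde{\varphi_{\varepsilon,3}}\,\partial_{x_2}\chi$. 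On $\Omega^{c,2}$ the corresponding terms now carry $D_\varepsilon\varepsilon^{\alpha-1}$ (resp. $D_\varepsilon\varepsilon^{\alpha}$ and $D_\varepsilon^{-1}\varepsilon^{\alpha}$) in front of $\nabla\overline{\varphi_{\varepsilon,2}}$; expressed through the bounded quantity $\varepsilon^{\alpha/2}\nabla\overline{\varphi_{\varepsilon,2}}$ (third estimate in \eqref{J18,2019estimatesalfa=2Aprile 62019}) they acquire a factor $\varepsilon^{\alpha/2-1}$, which tends to zero precisely because $\alpha>2$. This is the decisive difference with the case $\alpha=2$, where this factor is $O(1)$ and the $\Omega^{c,2}$ contribution survives in \eqref{F5,2019Pasqua2019}; equivalently, one may simply invoke Proposition \ref{Proposizione Monda37Aprile2019}, which gives that the relevant two-scale limit in $\Omega^{c,2}$ is zero. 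Consequently the limit variational problems decouple and coincide with \eqref{F5bis',2019} and \eqref{F5ter",2019}.

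Since \eqref{F5bis',2019} and \eqref{F5ter",2019} admit $\varphi_3$ of \eqref{F12,2019} and $\varphi_1$ of \eqref{F12,2019bis} as their unique solutions, the two-scale limits are independent of the extracted subsequence and the convergences hold for the whole sequence, which is exactly \eqref{Monda1bisterforseter8Aprile2019} and \eqref{Monda1bisterforsequater8Aprile2019}. I expect the only delicate point to be the bookkeeping of the powers of $\varepsilon$ in the $\Omega^{c,2}$ terms: one must verify that the surviving Neumann-matching condition $\partial_{x_2}u_3(x_1,l_2-1,y)=0$ for $y\in]0,1[\setminus(\omega^a\cup\omega^b)$ really comes with no residual coupling to $\Omega^{c,2}$, i.e. that the vanishing $\varepsilon^{\alpha/2-1}\to0$ is the mechanism at work in the step corresponding to \eqref{F2,2019}--\eqref{F5,2019Pasqua2019}.
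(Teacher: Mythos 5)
Your proposal is correct and is essentially the paper's own proof: the paper disposes of this proposition in one sentence, instructing the reader to repeat the proof of Proposition \ref{Proposizione Eliquis1} with the test functions multiplied by $\varepsilon^{\alpha}$ instead of $\varepsilon^{2}$, and observing that the argument is even simpler because the second limit in \eqref{Monda1bisterforse7Aprile2019} is zero, so the $\Omega^{c,2}$ coupling drops out. Your explicit bookkeeping of the powers of $\varepsilon$ (in particular the factor $\varepsilon^{\alpha/2-1}\rightarrow 0$, equivalently the vanishing two-scale limit from Proposition \ref{Proposizione Monda37Aprile2019}, killing the $\Omega^{c,2}$ terms) is precisely the mechanism the paper's remark relies on.
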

\begin{proof} One can repeat the proof of  Proposition \ref{Proposizione Eliquis1}, by making attention to use equation \eqref{J13,2019weakrescaled} with $\alpha>2$ instead of  $\alpha=2$, and to multiply  the test functions by $\varepsilon^\alpha$  instead of $\varepsilon^2$ when it occurs. Really, in this case the proof is simpler  than the proof of  Proposition \ref{Proposizione Eliquis1} due to the fact that the second limit in \eqref{Monda1bisterforse7Aprile2019} is zero. \end{proof}

The following result is an immediate consequence of Proposition \ref{Proposizione Monda37Aprile2019} and Proposition \ref{Proposizione Eliquis18Aprile2019}.
\begin{Corollary}
For every $\varepsilon$, let $\varphi_\varepsilon$ be the unique solution to \eqref{J13,2019weakrescaled} with $\alpha>2$  and let  $\overline{\varphi_{\varepsilon,2}}$,
$\widetilde{\varphi_{\varepsilon,3}}$, and $\widehat{\varphi_{\varepsilon,1}}$ be defined by \eqref{F13,2019}, \eqref{Eliquis2}, and  \eqref{Eliquis4}, respectively. Moreover,  let  $\varphi_3$  and $\varphi_1$ be defined by  \eqref{F12,2019} and \eqref{F12,2019bis}, respectively. Then
\begin{equation} \nonumber\varepsilon^{\frac{\alpha-2}{2}}\overline{\varphi_{\varepsilon,2}}\rightharpoonup 0,\quad\varepsilon^{\frac{\alpha}{2}}\partial_{x_1}\overline{\varphi_{\varepsilon,2}}\rightharpoonup0,\quad \varepsilon^{\frac{\alpha}{2}}\partial_{x_2}\overline{\varphi_{\varepsilon,2}}\rightharpoonup0, \hbox{ weakly in }L^2(\Omega^{c,2}),
\end{equation}
\begin{equation} \nonumber\widetilde{\varphi_{\varepsilon,3}}\rightharpoonup (x_2-l_2)\hbox{meas}(\omega^b)+1, \quad\partial_{x_2}\widetilde{\varphi_{\varepsilon,3}}\rightharpoonup\hbox{meas}(\omega^b),\hbox{ weakly in }L^2(\Omega^{c,3}),
\end{equation}
and
\begin{equation} \nonumber\widehat{\varphi_{\varepsilon,1}}\rightharpoonup (x_2-l_1)\hbox{meas}(\omega^a),\quad  \quad\partial_{x_2}\widehat{\varphi_{\varepsilon,1}}\rightharpoonup\hbox{meas}(\omega^a),\hbox{ weakly in }L^2(\Omega^{c,1}),
\end{equation}
as $\varepsilon$ tends to zero.
\end{Corollary}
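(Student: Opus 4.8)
The plan is to deduce these weak $L^2$ limits directly from the two-scale limits already produced by Proposition \ref{Proposizione Monda37Aprile2019} and Proposition \ref{Proposizione Eliquis18Aprile2019}, using the elementary fact that two-scale convergence is stronger than weak convergence to the cell-average. Concretely, if a sequence $v_\varepsilon$ is bounded in $L^2(\Omega)$ and two-scale converges to some $v(x,y)\in L^2(\Omega\times]0,1[)$, then testing the two-scale convergence against oscillating functions that are taken independent of the fast variable, i.e. $\psi=\psi(x)$, yields
\begin{equation}\nonumber
v_\varepsilon\rightharpoonup \int_0^1 v(\cdot,y)\,dy\quad\hbox{weakly in }L^2(\Omega).
\end{equation}
Hence the only genuine work is to integrate the explicit two-scale limits over the unit cell.

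For the middle region $\Omega^{c,2}$, all three two-scale limits in \eqref{Monda1bisterforse7Aprile2019} are zero, so the weak limits of $\varepsilon^{\frac{\alpha-2}{2}}\overline{\varphi_{\varepsilon,2}}$, $\varepsilon^{\frac{\alpha}{2}}\partial_{x_1}\overline{\varphi_{\varepsilon,2}}$, and $\varepsilon^{\frac{\alpha}{2}}\partial_{x_2}\overline{\varphi_{\varepsilon,2}}$ vanish immediately. For $\Omega^{c,3}$ I would insert the explicit profile $\varphi_3$ from \eqref{F12,2019} and integrate in $y$: since $\varphi_3=x_2+1-l_2$ on $\omega^b$ and $\varphi_3=1$ off $\omega^b$, its average is $\hbox{meas}(\omega^b)(x_2+1-l_2)+(1-\hbox{meas}(\omega^b))=(x_2-l_2)\hbox{meas}(\omega^b)+1$, while $\partial_{x_2}\varphi_3$ equals $1$ on $\omega^b$ and $0$ elsewhere, hence averages to $\hbox{meas}(\omega^b)$; combined with \eqref{Monda1bisterforseter8Aprile2019} this gives the stated limits. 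The region $\Omega^{c,1}$ is analogous: with $\varphi_1$ from \eqref{F12,2019bis} equal to $x_2-l_1$ on $\omega^a$ and $0$ elsewhere, the cell-averages are $(x_2-l_1)\hbox{meas}(\omega^a)$ and $\hbox{meas}(\omega^a)$, matching \eqref{Monda1bisterforsequater8Aprile2019}.

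Since each step reduces to an elementary integration over the unit cell of an explicitly known limit profile, there is no real obstacle and the result is indeed an immediate corollary. The only point deserving a line of care is the justification of the averaging principle itself, namely that the uniform $L^2$ bounds underlying the existence of the two-scale limits (from the \emph{a priori} estimates of Proposition \ref{J21,2019prop6Aprile2019} and the corollary preceding it) are precisely what legitimizes passing from two-scale convergence to weak convergence through fast-variable-independent test functions. Everything else is bookkeeping of the profiles $\varphi_1$ and $\varphi_3$.
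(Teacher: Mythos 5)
Your proposal is correct and is precisely the argument the paper intends: the Corollary is stated there as an immediate consequence of Propositions \ref{Proposizione Monda37Aprile2019} and \ref{Proposizione Eliquis18Aprile2019}, the implicit mechanism being exactly the one you spell out, namely that two-scale convergence tested against functions independent of the fast variable yields weak $L^2$ convergence to the cell average. Your computations of the averages of $\varphi_1$ and $\varphi_3$ over $]0,1[$ match the stated limits, so nothing is missing.
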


\subsection{Corrector results}
Arguing as in Proposition \ref{encon2019}, one obtains the following energies convergence.
\begin{Proposition}\label{encon2019Pasqua2019} For every $\varepsilon$, let $\varphi_\varepsilon$ be the unique solution to \eqref{J13,2019weakrescaled} with $\alpha>2$. Moreover,  let   $\varphi_1$ and   $\varphi_3$  be defined by  \eqref{F12,2019bis} and  \eqref{F12,2019},  respectively. Then
\begin{equation}\nonumber\begin{array}{lll} 
\displaystyle{\lim_{\varepsilon\rightarrow 0}\bigg[\int_{\Omega^{c,1}_\varepsilon\cup \Omega^{c,3}_\varepsilon}\left( \left\vert  \varepsilon^\alpha\partial_{x_1}  \varphi_\varepsilon \right\vert^2+\left\vert\partial_{x_2}  \varphi_\varepsilon \right\vert^2\right)+
\int_{\Omega^{c,2}_\varepsilon}\left(  D_\varepsilon \left\vert\varepsilon^{\frac{\alpha}{2}}\partial_{x_1}  \varphi_\varepsilon \right\vert^2+D^{-1}_\varepsilon\left\vert\varepsilon^{\frac{\alpha}{2}}\partial_{x_2}  \varphi_\varepsilon \right\vert^2 \right)dx\bigg]
}\\\\
\displaystyle{=
\int_{\Omega^{c,1}\times\omega^a}\left\vert\partial_{x_2}  \varphi_1\right\vert^2dxdy+\int_{\Omega^{c,3}\times\omega^b}\left\vert\partial_{x_2}  \varphi_3\right\vert^2dxdy.}
\end{array}\end{equation}
\end{Proposition}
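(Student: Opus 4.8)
The plan is to mirror the proof of Proposition \ref{encon2019}, testing the rescaled weak formulation \eqref{J13,2019weakrescaled} (now with $\alpha>2$) against $\psi=\varepsilon^\alpha(\varphi_\varepsilon-\varphi^\star_\varepsilon)$, where $\varphi^\star_\varepsilon$ is defined by \eqref{F16,2019z}-\eqref{F16,2019zq}. Expanding the bilinear form and moving the cross products to the right-hand side yields an energy identity whose left-hand side is exactly the quantity in the statement, after rewriting $|\varepsilon^\alpha\partial_{x_1}\varphi_\varepsilon|^2=\varepsilon^{2\alpha}|\partial_{x_1}\varphi_\varepsilon|^2$ and $|\varepsilon^{\frac{\alpha}{2}}\partial_{x_i}\varphi_\varepsilon|^2=\varepsilon^{\alpha}|\partial_{x_i}\varphi_\varepsilon|^2$. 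The right-hand side then collects the four cross products $\varepsilon^{2\alpha}\partial_{x_1}\varphi_\varepsilon\,\partial_{x_1}\varphi^\star_\varepsilon$ and $\partial_{x_2}\varphi_\varepsilon\,\partial_{x_2}\varphi^\star_\varepsilon$ on $\Omega^{c,1}_\varepsilon\cup\Omega^{c,3}_\varepsilon$, together with their $D_\varepsilon$-weighted analogues on $\Omega^{c,2}_\varepsilon$.

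Next I would pass to the limit, using $\partial_{x_1}\varphi^\star_\varepsilon=\varepsilon^{-1}(\partial_y\varphi^\star)\left(\frac{x_1}{\varepsilon},x_2\right)$ and $\partial_{x_2}\varphi^\star_\varepsilon=(\partial_{x_2}\varphi^\star)\left(\frac{x_1}{\varepsilon},x_2\right)$ and carefully tracking the powers of $\varepsilon$ in each cross term. On $\Omega^{c,1}_\varepsilon\cup\Omega^{c,3}_\varepsilon$ the $\partial_{x_1}$-term equals $\varepsilon^{\alpha-1}(\varepsilon^\alpha\partial_{x_1}\varphi_\varepsilon)(\partial_y\varphi^\star)\left(\frac{x_1}{\varepsilon},x_2\right)$ and vanishes, since $\varepsilon^\alpha\partial_{x_1}\varphi_\varepsilon$ is bounded in $L^2$ by \eqref{J18,2019estimatesalfa=2Aprile 62019} and $\alpha>2>1$. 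On $\Omega^{c,2}_\varepsilon$ both cross terms carry a strictly positive power of $\varepsilon$, namely $\varepsilon^{\frac{\alpha}{2}-1}$ and $\varepsilon^{\frac{\alpha}{2}}$, so by the third estimate in \eqref{J18,2019estimatesalfa=2Aprile 62019} and \eqref{J15,2019} they both vanish; this is exactly where $\alpha>2$ makes the argument strictly simpler than the $\alpha=2$ case, as the entire contribution of $\Omega^{c,2}$ disappears, consistently with the fact that the two-scale limit of $\overline{\varphi_{\varepsilon,2}}$ is $0$ in Proposition \ref{Proposizione Monda37Aprile2019}. The only surviving terms are $\int_{\Omega^{c,1}_\varepsilon}\partial_{x_2}\varphi_\varepsilon(\partial_{x_2}\varphi^\star)\left(\frac{x_1}{\varepsilon},x_2\right)dx$ and its $\Omega^{c,3}_\varepsilon$ counterpart, which converge by the two-scale convergences \eqref{Monda1bisterforseter8Aprile2019}-\eqref{Monda1bisterforsequater8Aprile2019} of Proposition \ref{Proposizione Eliquis18Aprile2019}, with the smooth $y$-periodic function $(\partial_{x_2}\varphi^\star)\left(\frac{x_1}{\varepsilon},x_2\right)$ as admissible oscillating test function, to $\int_{\Omega^{c,1}\times]0,1[}\partial_{x_2}\varphi_1\,\partial_{x_2}\varphi^\star\,dxdy+\int_{\Omega^{c,3}\times]0,1[}\partial_{x_2}\varphi_3\,\partial_{x_2}\varphi^\star\,dxdy$.

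Finally I would identify this limit with the stated right-hand side exactly as in \eqref{F17,2019tkkha}. Since $\partial_{x_2}\varphi_1$ equals $1$ on $\omega^a$ and $0$ elsewhere, and $\partial_{x_2}\varphi_3$ equals $1$ on $\omega^b$ and $0$ elsewhere by \eqref{F12,2019}-\eqref{F12,2019bis}, the limit reduces to $\int_{\Omega^{c,1}\times\omega^a}\partial_{x_2}\varphi^\star\,dxdy+\int_{\Omega^{c,3}\times\omega^b}\partial_{x_2}\varphi^\star\,dxdy$; integrating $\partial_{x_2}\varphi^\star$ in $x_2$ and invoking the boundary values of $\varphi^\star$ in \eqref{F16,2019z1} (namely $\varphi^\star=0$ on $\mathbb{R}\times\{l_1\}$ and $\varphi^\star=1$ on $\omega^a\times]l_1+1,l_2[$, and $\varphi^\star=0$ on $\omega^b\times]l_1,l_2-1[$ and $\varphi^\star=1$ on $\mathbb{R}\times\{l_2\}$) gives $L(\hbox{meas}(\omega^a)+\hbox{meas}(\omega^b))$, which is precisely $\int_{\Omega^{c,1}\times\omega^a}|\partial_{x_2}\varphi_1|^2\,dxdy+\int_{\Omega^{c,3}\times\omega^b}|\partial_{x_2}\varphi_3|^2\,dxdy$. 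The only delicate point, and the main obstacle, is the bookkeeping of the $\varepsilon$-exponents in the $\Omega^{c,2}_\varepsilon$ cross products, to confirm that each genuinely carries a strictly positive power of $\varepsilon$; once this is checked, the remainder is a transcription of the $\alpha=2$ argument with the $\Omega^{c,2}$ limit term simply deleted.
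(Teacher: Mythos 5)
Your proposal is correct and is exactly the argument the paper intends: the paper's proof of Proposition \ref{encon2019Pasqua2019} is simply ``arguing as in Proposition \ref{encon2019},'' and your writeup faithfully carries out that transcription, with the test function $\psi=\varepsilon^\alpha(\varphi_\varepsilon-\varphi^\star_\varepsilon)$ replacing $\varepsilon^2(\varphi_\varepsilon-\varphi^\star_\varepsilon)$, the correct $\varepsilon$-exponent bookkeeping showing every $\Omega^{c,2}_\varepsilon$ cross term now carries a strictly positive power of $\varepsilon$ (hence the absence of the $\varphi_2$ term in the limit), and the identification of the surviving limits via \eqref{F16,2019z1} exactly as in \eqref{F17,2019tkkha}.
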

By arguing as in Proposition \ref{F19,2019corrres}, Proposition \ref{Proposizione Monda37Aprile2019},   Proposition \ref{Proposizione Eliquis18Aprile2019}, and Proposition \ref{encon2019Pasqua2019} provide the following corrector results.
\begin{Proposition}\label{F19,2019corrresPasqua2019} For every $\varepsilon$, let $\varphi_\varepsilon$ be the unique solution to \eqref{J13,2019weakrescaled} with $\alpha>2$. Moreover,  let   $\varphi_1$ and   $\varphi_3$  be defined by  \eqref{F12,2019bis} and  \eqref{F12,2019},  respectively. Then
\begin{equation}\nonumber\begin{array}{lll} \displaystyle{\lim_{\varepsilon\rightarrow0}
\int_{\Omega^{c,1}_\varepsilon}\left( \left\vert  \varepsilon^\alpha\partial_{x_1}  \varphi_\varepsilon \right\vert^2+\left\vert \partial_{x_2}  \varphi_\varepsilon (x)-\left(\partial_{x_2}\varphi_1 \right)\left(\frac{x_1}{\varepsilon}\right)\right\vert ^2\right)dx=0,
}\end{array}\end{equation}
\begin{equation}\nonumber\begin{array}{lll} \displaystyle{\lim_{\varepsilon\rightarrow0}
\int_{\Omega^{c,3}_\varepsilon}\left( \left\vert  \varepsilon^\alpha\partial_{x_1}  \varphi_\varepsilon \right\vert^2+\left\vert \partial_{x_2}  \varphi_\varepsilon (x)-\left(\partial_{x_2}\varphi_3 \right)\left(\frac{x_1}{\varepsilon}\right)\right\vert ^2\right)dx=0,
}\end{array}\end{equation}
and
\begin{equation}\nonumber\begin{array}{lll} \displaystyle{\lim_{\varepsilon\rightarrow0}
\int_{\Omega^{c,2}_\varepsilon}\left( \left\vert  \varepsilon^{\frac{\alpha}{2}}\partial_{x_1}  \varphi_\varepsilon \right\vert^2+\left\vert \varepsilon^{\frac{\alpha}{2}}\partial_{x_2}  \varphi_\varepsilon (x)\right\vert ^2\right)dx=0.
}\end{array}\end{equation}
\end{Proposition}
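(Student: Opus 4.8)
The plan is to follow the structure of the proof of Proposition~\ref{F19,2019corrres} almost verbatim, the one genuine difference being that the $\Omega^{c,2}$ corrector disappears. First I would expand the three squares in the statement, working with the $D_\varepsilon$-weighted combination exactly as for $\alpha=2$, to obtain for every $\varepsilon$ an algebraic identity whose left-hand side is
\begin{equation}\nonumber
\begin{array}{ll}
\displaystyle{\int_{\Omega^{c,1}_\varepsilon}\left( \left|\varepsilon^\alpha\partial_{x_1}\varphi_\varepsilon\right|^2 + \left|\partial_{x_2}\varphi_\varepsilon - \left(\partial_{x_2}\varphi_1\right)\left(\frac{x_1}{\varepsilon}\right)\right|^2\right)dx + \int_{\Omega^{c,3}_\varepsilon}\left(\cdots\right)dx}\\\\
\displaystyle{+\int_{\Omega^{c,2}_\varepsilon}\left(D_\varepsilon\left|\varepsilon^{\frac{\alpha}{2}}\partial_{x_1}\varphi_\varepsilon\right|^2 + D_\varepsilon^{-1}\left|\varepsilon^{\frac{\alpha}{2}}\partial_{x_2}\varphi_\varepsilon\right|^2\right)dx}
\end{array}
\end{equation}
and whose right-hand side is the full scaled energy appearing on the left of Proposition~\ref{encon2019Pasqua2019}, augmented by the ``pure corrector'' terms $\int_{\Omega^{c,1}}|(\partial_{x_2}\varphi_1)(\frac{x_1}{\varepsilon})|^2\,dx$ and $\int_{\Omega^{c,3}}|(\partial_{x_2}\varphi_3)(\frac{x_1}{\varepsilon})|^2\,dx$, minus twice the cross terms $\int_{\Omega^{c,1}}\partial_{x_2}\widehat{\varphi_{\varepsilon,1}}(\partial_{x_2}\varphi_1)(\frac{x_1}{\varepsilon})\,dx$ and its $\Omega^{c,3}$ analogue. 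The decisive simplification relative to the case $\alpha=2$ is that no $\Omega^{c,2}$ corrector enters: by Proposition~\ref{Proposizione Monda37Aprile2019} the two-scale limit $u_2$ of $\varepsilon^{\frac{\alpha-2}{2}}\overline{\varphi_{\varepsilon,2}}$ vanishes, which is precisely why the right-hand side of Proposition~\ref{encon2019Pasqua2019} carries no $\Omega^{c,2}$ contribution.

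Next I would pass to the limit $\varepsilon\to0$ in this identity term by term. The scaled-energy block converges, by Proposition~\ref{encon2019Pasqua2019}, to $\int_{\Omega^{c,1}\times\omega^a}|\partial_{x_2}\varphi_1|^2\,dx\,dy + \int_{\Omega^{c,3}\times\omega^b}|\partial_{x_2}\varphi_3|^2\,dx\,dy$. The pure corrector terms converge, by the weak convergence of the fast-oscillating functions to their means and the explicit forms \eqref{F12,2019}--\eqref{F12,2019bis} (so that $\partial_{x_2}\varphi_1$ equals $1$ on $\omega^a$ and $0$ elsewhere, symmetrically for $\varphi_3$), to the same two integrals. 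The cross terms converge, using the two-scale convergence of $\partial_{x_2}\widehat{\varphi_{\varepsilon,1}}$ and $\partial_{x_2}\widetilde{\varphi_{\varepsilon,3}}$ from Proposition~\ref{Proposizione Eliquis18Aprile2019} tested against the oscillating functions $(\partial_{x_2}\varphi_1)(\frac{x_1}{\varepsilon})$ and $(\partial_{x_2}\varphi_3)(\frac{x_1}{\varepsilon})$, to twice those same integrals. Concretely, on $\Omega^{c,1}$ the energy limit and the pure-corrector limit each equal $L\,\hbox{meas}(\omega^a)$ while the cross term tends to $-2L\,\hbox{meas}(\omega^a)$, so their sum is $0$, and likewise on $\Omega^{c,3}$ with $\omega^b$; meanwhile the $\Omega^{c,2}$ block already tends to $0$. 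Hence the displayed $D_\varepsilon$-weighted sum tends to $0$; since its integrand is a sum of nonnegative terms and $D_\varepsilon\to\frac{l_2-l_1}{l_2-l_1-2}>0$ by \eqref{J15,2019}, each of the three limits in the statement vanishes separately.

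I expect the only point requiring a word of justification to be that $(\partial_{x_2}\varphi_1)(\frac{x_1}{\varepsilon})$ and $(\partial_{x_2}\varphi_3)(\frac{x_1}{\varepsilon})$ are admissible test functions for the two-scale limits, so that the cross terms genuinely converge to the corrector-squared integrals. This is immediate here, because $\partial_{x_2}\varphi_1$ and $\partial_{x_2}\varphi_3$ depend only on the fast variable and are piecewise constant in it (the characteristic functions of $\omega^a$ and $\omega^b$), hence bounded, measurable, and of well-defined mean; no regularity argument beyond the one already invoked for $\alpha=2$ is needed. All remaining manipulations are the routine expansions carried out in Proposition~\ref{F19,2019corrres}, with $\varepsilon^2$ replaced by $\varepsilon^\alpha$ and $\varepsilon$ by $\varepsilon^{\frac{\alpha}{2}}$, the proof being in fact shorter since the $\Omega^{c,2}$ corrector is absent.
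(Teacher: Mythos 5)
Your proposal is correct and follows essentially the same route as the paper: the paper's own proof is precisely the one-line instruction to repeat the argument of Proposition~\ref{F19,2019corrres}, replacing the $\alpha=2$ ingredients by Proposition~\ref{Proposizione Monda37Aprile2019}, Proposition~\ref{Proposizione Eliquis18Aprile2019}, and Proposition~\ref{encon2019Pasqua2019}, with the $\Omega^{c,2}$ corrector dropping out because the two-scale limit there vanishes. Your expansion of the squares, the limit computation term by term ($L\,\mathrm{meas}(\omega^a)+L\,\mathrm{meas}(\omega^a)-2L\,\mathrm{meas}(\omega^a)=0$ on $\Omega^{c,1}$, and analogously on $\Omega^{c,3}$), and the final positivity argument using $D_\varepsilon\to\frac{l_2-l_1}{l_2-l_1-2}>0$ are exactly what that instruction unpacks to.
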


Finally, using Proposition \ref{F19,2019corrresPasqua2019}, the proof of Theorem  \ref{main theoremapril24,2019}
with  $\alpha>2$ follows the same outline of  the proof of Theorem  \ref{main theoremapril24,2019} with $\alpha=2$.

\section*{Acknowledgments}

 The authors thank the
 "Gruppo Nazionale per l'Analisi Matematica, la Probabilit\`a e le loro Applicazioni (GNAMPA)"
 of the "Istituto Nazionale di Alta Matematica (INdAM)" (Italy),  the "Universit\'e de Franche-Comt\'e"  (France), and  the  competitive funding program  for interdisciplinary
 research  of  "CNRS" (France)  for their financial  support.

\end{document}